\theoremstyle{plain}
\newtheorem{claim}{Claim}
\newtheorem{notation}{Notation}
\newtheorem{theorem}{Theorem}[section]
\newtheorem{definition}[theorem]{Definition}
\newtheorem{proposition}[theorem]{Proposition}
\newtheorem{corollary}[theorem]{Corollary}
\newtheorem{lemma}[theorem]{Lemma}
\newtheorem{example}[theorem]{Example}
\newtheorem{remark}[theorem]{Remark}
\numberwithin{equation}{section}
\newtheorem*{theorem*}{Theorem}
\begin{document}
\title[]{Intermediate Assouad-like Dimensions}
\author{Ignacio Garc\'{\i}a, Kathryn E. Hare and Franklin Mendivil}
\address{Centro Marplatense de Investigaciones Matem\'{a}ticas, Facultad de
Ciencias Exactas y Naturales\\
and Instituto de Investigaciones F\'{\i}sicas de Mar del Plata (CONICET)\\
Universidad Nacional de Mar del Plata, Argentina }
\email{nacholma@gmail.com}
\address{Dept. of Pure Mathematics, University of Waterloo, Waterloo, Ont.,
Canada, N2L 3G1}
\email{kehare@uwaterloo.ca}
\address{Department of Mathematics and Statistics, Acadia University,
Wolfville, N.S. Canada, B4P 2R6}
\email{franklin.mendivil@acadiau.ca}
\thanks{The research of K. Hare is partially supported by NSERC 2016:03719.
The research of F. Mendivil is partially supported by NSERC\ 2012:238549. I.
Garc\'{\i}a and K. Hare thank Acadia University for their hospitality when
some of this research was done.}
\subjclass[2010]{Primary: 28A78; Secondary 28A80}
\keywords{Assouad dimension, quasi-Assouad dimension, box dimension, $\theta 
$-spectrum, Cantor sets}

\maketitle

\begin{abstract}
We study a class of bi-Lipschitz-invariant dimensions that range between the
box and Assouad dimensions. The quasi-Assouad dimensions and $\theta $%
-Assouad spectrum are other special examples. These dimensions are
localized, like Assouad dimensions, but vary in the depth of scale which is
considered, thus they provide very refined geometric information. Our main
focus is on the intermediate dimensions which range between the
quasi-Assouad and Assouad dimensions, complementing the $\theta $-Assouad
spectrum which ranges between the box and quasi-Assouad dimensions.

We investigate the relationship between these and the familiar dimensions.
We construct a Cantor set with a non-trivial interval of dimensions, the
endpoints of this interval being given by the quasi-Assouad and Assouad
dimensions of the set. We study stability and continuity-like properties of
the dimensions. In contrast with the Assouad-type dimensions, we see that
decreasing sets in $\mathbb{R}$ with decreasing gaps need not have dimension 
$0$ or $1$. As is the case for Hausdorff and Assouad dimensions, the Cantor
set and the decreasing set have the extreme dimensions among all compact
sets in $\mathbb{R}$ whose complementary set consists of open intervals of
the same lengths.
\end{abstract}

\section{Introduction and main results}

\subsection{Introduction}

Over the years, many notions of dimension have been introduced to help
understand the geometry of (often `small') subsets of metric spaces, such as
subsets of $\mathbb{R}^{n}$ of Lebesgue measure zero. Hausdorff, box and
packing dimensions are well known examples of such notions. More recently,
the upper and lower Assouad dimensions of a set $E$, denoted $\dim _{A}E$
and $\dim _{L}E$ respectively, which quantify the `thickest' or `thinnest'
part of the space, were introduced by Assouad in {\cite{A1,A2} and Larman in 
\cite{L1}. Along with their less extreme versions, the upper and lower
quasi-Assouad dimensions, }$\dim _{qA}E$ and $\dim _{qL}E,$ introduced in 
\cite{CDW, LX},{\ these dimensions have been extensively studied within the
fractal geometry community; see for example, \cite{CWC, FTrans, FHOR, FTale,
FYAdv, FYInd, GH, HT18, KR, Li, Luu, MT} and the references cited therein.
These dimensions can roughly be thought of as local refinements of the
box-counting dimensions where one takes the most extreme local behaviour.
The following relationships are known for all compact sets }$E$:%
\begin{equation*}
\dim _{L}E\leq \dim _{qL}E\leq \dim _{H}E\leq \underline{\dim }_{B}E\leq 
\overline{\dim }_{B}E\leq \dim _{qA}E\leq \dim _{A}E,
\end{equation*}%
where $\dim _{H}$, $\underline{\dim }_{B}$, $\overline{\dim }_{B}$ denote
the Hausdorff, lower and upper box dimensions respectively. See \cite{Fal,
FTrans, GH,LX} for proofs.

In several recent papers, a range of intermediate dimensions have been
introduced and studied. For instance, in \cite{FFK}, Falconer, Fraser and
Kempton discussed a continuum of dimensions that lie between the Hausdorff
and box dimensions. In \cite{FYAdv, FYInd}, Fraser and Yu focussed on the
family of dimensions known as the upper (or lower) $\theta $-Assouad
spectrum, which lie between the upper (or lower) box and quasi-upper (resp.,
lower) Assouad dimensions. For further references, we refer the reader to
Fraser's survey paper \cite{FrSurvey}.

In this paper, we study a general class of intermediate dimensions, which we
refer to as the upper and lower $\Phi $-dimensions. These include the
(quasi-) Assouad dimensions and $\theta $-Assouad spectrum as special cases,
with the box dimensions typically arising as a limit. More generally, the $%
\Phi $-dimensions provide a range of bi-Lipschitz invariant dimensions
between the box, quasi-Assouad and Assouad dimensions. As the box and
(quasi-) Assouad dimensions for a given set can all be different, the
intermediate $\Phi $-dimensions provide more refined information about the
local geometry of the set, such as detailed information about the scales at
which one can observe extreme local behaviour.

Another motivation for us to investigate these intermediate dimensions was
the classical problem of understanding the dimension of `rearrangements' of
Cantor sets. This problem was first considered by Besicovitch and Taylor in 
\cite{BT} for the Hausdorff dimension in the deterministic case and later by
Hawkes in \cite{Ha} for the random situation. In \cite{GHMArXiv} we prove
that if $\Phi (x)\ll \log |\log x|/|\log x|$, then the upper and lower $\Phi 
$-dimensions of almost all (in a natural probabilistic sense) rearrangements
of a given Cantor set are $1$ and $0$ respectively, while if $\Phi (x)\gg
\log |\log x|/|\log x|,$ then almost all rearrangements have the same upper and lower $\Phi $%
-dimensions as the original Cantor set. The first case includes the Assouad
dimensions and the second, the quasi-Assouad. In \cite{Tr}, Troscheit
obtained similar results for other random constructions.

\subsection{$\Phi $-dimensions}

To explain these dimensions in more detail, we first recall that f{or the
upper box dimension of a metric space }$E$ {one considers the minimal number
of balls of radius }$r$ that are required to cover the entire space $E,$ say 
$N_{r}(E),$ and computes the infimal exponent $s$ such that $N_{r}(E)\leq
r^{-s}$ as $r\rightarrow 0$. For the upper Assouad dimension of $E$ one
determines, instead, the infimal $s$ such that 
\begin{equation*}
N_{r}(B(z,R)\cap E)\leq (R/r)^{s}
\end{equation*}%
for all $r\leq R$ and all centres $z\in E$. The lower Assouad dimension is a
similar local variation of the lower box dimension. The quasi-Assouad
dimensions are less extreme versions of the Assouad dimensions, requiring
only that the bounds hold for $r\leq R^{p}$ where the exponent $p$ decreases
to $1$.

Fraser and Yu observed in \cite[Section 9]{FYAdv} that a rich dimension
theory can be developed by considering decreasing continuous functions $%
F(x)\leq x$, choosing $r=F(R)$ (or $r\leq F(R)$ in our modified case) and
studying the corresponding Assouad-like dimensions. In \cite{FTale, FYAdv,
FYInd}, Fraser et al studied the special case of $F(x)=x^{1/\theta }$ for
fixed $\theta \in (0,1)$, the so-called upper and lower $\theta $-Assouad
spectrum. As $\theta \rightarrow 1,$ the upper (lower) $\theta $-Assouad
spectrum tends from below (resp., above) to the upper (resp., lower)
quasi-Assouad dimension. As $\theta \rightarrow 0,$ the upper $\theta $%
-Assouad spectrum tends from above to the upper box dimension, while the
lower $\theta $-Assouad spectrum is dominated by the lower box dimension.

Motivated by the work of Fraser, we consider the quite general class of
functions $F(x)=x^{1+\Phi (x)},$ requiring only that $F(x)$ decreases to $0$
as $x\downarrow 0.$ The \textbf{upper }and\textbf{\ lower }$\Phi $\textbf{%
-dimensions }of $E$, denoted by $\overline{\dim }_{\Phi }E,$ $\underline{%
\dim }_{\Phi }E$ respectively, arise by restricting to $r\leq R^{1+\Phi (R)}$%
. We refer the reader to Definition \ref{DefnPhidim} for the precise
definitions.

When $\Phi =0$ we recover the Assouad dimensions and when $\Phi =1/\theta
-1, $ we get the $\theta $-Assouad spectrum. It will be shown that if $\Phi
(x)\rightarrow \infty $ as $x\rightarrow 0$ (and $\underline{\dim }_{\Phi
}E>0)$ the upper (lower) $\Phi $-dimension is the upper (resp., lower) box
dimension (Prop. \ref{Prop:box}), while if $\Phi (x)\rightarrow \delta \in
(0,\infty )$, then the upper (lower) $\Phi $-dimension coincides with the
upper (resp., lower) $\theta $-Assouad spectrum for $\theta =(1+\delta
)^{-1} $ (Cor. \ref{Coro:p}). Thus our main interest in this paper is in the
case that $\Phi \rightarrow 0$ (such as the function $\Phi (x)=\log |\log
x|/|\log x|,$ which appears naturally in the random problem) when we obtain
a full range of intermediate dimensions between the quasi-Assouad and
Assouad dimensions.

\subsection{Summary of the main results}

The primary purpose of this paper is to study the basic properties of these
intermediate dimensions. 

One easy property is that the upper $\Phi $%
-dimension is finitely stable, but the lower $\Phi $-dimension is not. See
Proposition \ref{union}.

\subsubsection{Relationship between dimensions}

A\ natural question to ask is how the $\Phi $-dimensions compare, both with
each other and to the familiar dimensions. Clearly, they are naturally
ordered: If $\Phi _{1}\leq \Phi _{2}$, then $\overline{\dim }_{\Phi
_{1}}(E)\geq \overline{\dim }_{\Phi _{2}}(E)$ and vice versa for the lower $%
\Phi $-dimensions. Thus the upper (lower) $\Phi $-dimensions lie between the
upper (lower) box and the upper (lower) Assouad dimensions.

Here is an overview of some of our main theoretical results on this question.

\begin{enumerate}
\item If $\Phi (x)\leq c/\left\vert \log x\right\vert ,$ then the upper and
lower $\Phi $-dimensions coincide with the upper and lower Assouad
dimensions (respectively) for all sets $E$. See Proposition \ref{ADim}.

\smallskip

\item The upper and lower quasi-Assouad dimensions are special examples of
upper and lower $\Phi $-dimensions, but the choice of dimension functions
will depend on the underlying set $E$. See Proposition \ref{qA}.

\smallskip

\item If $\Phi _{1}/\Phi _{2}(x)\rightarrow 1$ as $x\rightarrow 0,$ then the
upper (and lower) $\Phi _{1}$ and $\Phi _{2}$-dimensions coincide for all
sets $E$. See Proposition \ref{Propo-equal}(i).

\smallskip

\item \label{S4}If there is a constant $\xi >0$ such that $\Phi _{1}\geq
(1+\xi )\Phi _{2},$ then there are sets $E_{1},E_{2}$ with $\overline{\dim }%
_{\Phi _{1}}(E_{1})\neq $ $\overline{\dim }_{\Phi _{2}}E_{1}$ and $%
\underline{\dim }_{\Phi _{1}}E_{2}\neq \underline{\dim }_{\Phi _{2}}E_{2}$.
See Theorem \ref{Diff}.

\smallskip

\item \label{S5}Given any family of decreasing dimension functions, $\{\Phi
_{p}\}_{p\in (0,1)},$ with $\Phi _{p}\gg \Phi _{q}$ for $p>q,$ and given any
decreasing, continuous function $d:(0,1)\rightarrow $ $[a,b]\subseteq (0,1),$
there is a set $E\subseteq \mathbb{R}$ with $\overline{\dim }_{\Phi
_{p}}(E)= $ $d(p)$ for all $p$. The analogous result holds for the lower
dimensions. See Theorem \ref{continuum}. Hence there are subsets of $%
\mathbb{R}$ with a full (non-trivial) interval of dimensions whose endpoints
are given by the quasi-Assouad and Assouad dimensions. See Corollary \ref%
{Cor:continuum}.

\smallskip

\item It is known that the lower $\theta $-Assouad spectrum of a set are not
uniformly dominated above by the Hausdorff dimension. However $\underline{%
\dim }_{\Phi _{\theta }}E\leq \frac{1}{\theta }\dim _{H}E$ for $\Phi
_{\theta }=1/\theta -1$. See Proposition \ref{Propo-qLHau}.

\smallskip

\item In \cite{CWC} and \cite{FYInd}, it is shown that both the upper
and lower $\theta $-Assouad spectrum are continuous in the parameter $\theta 
$. More generally, if $\Phi _{t}(x)=g(t)\Phi (x)$ where $g$ is continuous
and $g(t_{0})\neq 0$, then $\overline{\dim }_{\Phi _{t}}(E)\rightarrow 
\overline{\dim }_{\Phi _{t_{0}}}(E)$ as $t\rightarrow t_{0},$ and similarly
for the lower dimensions. But this need not be true when $g(t_{0})=0$. See
Propositions \ref{Propo-equal}(ii) and \ref{ContFailure}. This suggests
that it may be difficult to find a one-parameter family of continuous
dimension functions that interpolates precisely between the quasi-Assouad
and Assouad dimensions.
\end{enumerate}

\subsubsection{Decreasing sets}

In \cite{GHM} it was shown that if $E=\{x_{n}\}_{n}\subseteq \mathbb{R}$ is
a decreasing sequence with decreasing gaps, then the Assouad dimension is
either $0$ or $1$. Likewise, the quasi-Assouad dimension is $0$ if $%
\overline{\dim }_{B}E=0$ and $1$ otherwise. In contrast, in Example \ref%
{Ex:Dec} we construct a decreasing set $E$ with decreasing gaps and a
dimension function $\Phi \rightarrow 0$ with $\dim _{qA}E=0<\overline{\dim }%
_{\Phi }E<\dim _{A}E=1$.

\smallskip

\subsubsection{Cantor sets and Rearrangements\label{rearr}}

We give formulas for the $\Phi $-dimensions of Cantor-like sets, similar to
those known for Hausdorff, box and Assouad dimensions, in Theorem \ref%
{Cantorformula}. These are used in some of our constructions, such as in
exhibiting sets with different values for various $\Phi $-dimensions as
mentioned in (\ref{S4}) and (\ref{S5}) above. This approach was taken in 
\cite{GH} and \cite{LX} to construct sets with different box and
(quasi-) Assouad dimensions, but new ideas are required here.

In \cite{BT}, Besicovitch and Taylor proved that if $C$ is a Cantor-like
set, then the interval $[0,\dim _{H}C]$ is precisely the set of Hausdorff
dimensions of `rearrangements' of $C,$ the sets whose complement consists of
open intervals of the same lengths as the complementary intervals of $C$.
This was extended in \cite{GHM} where it was found that the set of
attainable lower (upper) Assouad dimensions was \thinspace $\lbrack 0,\dim
_{L}C]$ (resp., $[\dim _{A}C,1])$, with $0$ (resp., $1)$ being the lower
(upper) Assouad dimension of the decreasing rearrangement. In Section \ref%
{Rearrangements}, we study this problem for the $\Phi $-dimensions and show
that under natural assumptions, the $\Phi $-dimensions of any rearrangement
lies between the dimensions of the Cantor set and the decreasing
rearrangement (whose upper $\Phi $-dimension could be $<1)$. Further, for
any $\Phi \rightarrow p\in \lbrack 0,\infty ]$ (including the quasi-Assouad
dimensions), this full range of values can be attained. New construction
techniques are needed to do this. In \cite{GHMArXiv} some of these results
are used in studying the $\Phi $-dimensions of random rearrangements.

\subsection*{Acknowledgements} The authors are grateful to the referee for the valuable suggestions made to improve the clarity in the exposition of the paper. 

\section{Basic properties of the $\Phi $-dimensions}

\subsection{Definitions\label{sec:Notation}}

We begin with notation and definitions. Let $X$ be a metric space.

\begin{notation}
We denote the closed ball centred at $z\in X$ with radius $R$ by $B(z,R)$.
For a bounded set $E\subseteq X$, the notation $N_{r}(E)$ will mean the
least number of balls of radius $r$ that cover $E$.
\end{notation}

\begin{definition}
A map $\Phi :(0,1)\mathbb{\rightarrow R}^{+}$ is called a \textbf{dimension
function} if $x^{1+\Phi (x)}$ decreases as $x$ decreases to $0$. We write $%
\mathcal{D}$ for the set of all dimension functions.
\end{definition}

Of course, $R^{1+\Phi (R)}\leq R$, so $R^{1+\Phi (R)}\rightarrow 0$ as $%
R\rightarrow 0$ for any dimension function $\Phi $. Special examples of
dimension functions include the constant functions $\Phi (x)=\delta \geq 0$
and the function $\Phi (x)=1/|\log x|$.

\begin{definition}
\label{DefnPhidim} Let $X$ be a metric space and $\Phi \in \mathcal{D}$. The 
\textbf{upper }and \textbf{lower }$\Phi $\textbf{-dimensions\ }of $%
E\subseteq X$ are given by 
\begin{align*}
\overline{\dim }_{\Phi }E=\inf \Bigl\{\alpha :(\exists c_{1},c_{2}>0)&
(\forall 0<r\leq R^{1+\Phi (R)}\leq R<c_{1})\text{ } \\
& \sup_{z\in E}N_{r}(B(z,R)\cap E)\leq c_{2}\left( \frac{R}{r}\right)
^{\alpha }\Bigr\}
\end{align*}%
and 
\begin{align*}
\underline{\dim }_{\Phi }E=\sup \Bigl\{\alpha :(\exists c_{1},c_{2}>0)&
(\forall 0<r\leq R^{1+\Phi (R)}\leq R<c_{1})\text{ } \\
& \inf_{z\in E}N_{r}(B(z,R)\cap E)\geq c_{2}\left( \frac{R}{r}\right)
^{\alpha }\Bigr\}.
\end{align*}
\end{definition}

A standard argument based upon the relationship between balls in
bi-Lipschitz spaces, shows that upper and lower $\Phi $-dimensions are
preserved under bi-Lipschitz maps. We also note that, as with the box
dimensions, a set and its closure have the same upper or lower $\Phi $%
-dimensions.

The upper and lower (quasi)-Assouad dimensions and spectrum can be expressed
in terms of $\Phi $-dimensions.

\begin{remark}
(i) The \textbf{upper} \textbf{Assouad }and \textbf{lower Assouad dimensions 
}of $E$, denoted $\dim _{A}E$ and $\dim _{L}E$ respectively, are the special
cases of the upper and lower $\Phi $-dimensions with $\Phi (x)=0$ for all $x$%
.

\smallskip

(ii) The \textbf{upper }and \textbf{lower }$\theta $-\textbf{Assouad spectrum%
}, $\overline{\dim }_{A}^{\theta }E$ and $\underline{\dim }_{L}^{\theta }E,$
are the special cases of the upper and lower $\Phi _{\theta }$-dimensions
with $\Phi _{\theta }(x)=1/\theta -1>0$ for all $x$. To be precise, the
upper and lower $\theta $-Assouad spectrum introduced in \cite{FYAdv} only
required consideration of $r=R^{1/\theta }$. However, it was shown in \cite%
{FTale} that if we denote this dimension by $\overline{\dim }_{A}^{=\theta
}E,$ then $\overline{\dim }_{A}^{\theta }E=\sup_{\psi \leq \theta }\overline{%
\dim }_{A}^{=\psi }E,$ with the analogous statement proven in \cite{CWC}
(see also \cite{HT18}) for the lower $\theta $-Assouad spectrum.

\smallskip

(iii) The \textbf{upper} \textbf{quasi-Assouad} and \textbf{lower
quasi-Assouad dimensions\ }are given by%
\begin{equation*}
\dim _{qA}E=\lim_{\theta \rightarrow 1}\overline{\dim }_{\Phi _{\theta }}E%
\text{ and }\dim _{qL}E=\lim_{\theta \rightarrow 1}\underline{\dim }_{\Phi
_{\theta }}E,
\end{equation*}%
where, again, $\Phi _{\theta }(x)=1/\theta -1>0$. In Proposition \ref{qA} we
will prove that the quasi-Assouad dimensions can also be obtained as $\Phi $%
-dimensions, but the choice of $\Phi $ depends on the set $E$.
\end{remark}

We refer the reader to the references cited in the introduction of this
paper for further background information on these various Assouad-like
dimensions.

\smallskip

It is easy to see that $\underline{\dim }_{\Phi }E=0$ whenever $E$ has an
isolated point, thus we need not have $\underline{\dim }_{\Phi }E\leq 
\underline{\dim }_{\Phi }F$ if $E\subseteq F$. This monotonicity property
does, however, hold for the upper $\Phi $-dimension, as does finite
stability. We show this first, along with bounds on the lower $\Phi $%
-dimension of finite unions.

\begin{proposition}
\label{union}(i) If $E\subseteq F$, then $\overline{\dim }_{\Phi }E\leq $ $%
\overline{\dim }_{\Phi }F$. Indeed, for $E,F\subseteq X,$ 
\begin{equation*}
\overline{\dim }_{\Phi }\left( E\cup F\right) =\max \left( \overline{\dim }%
_{\Phi }E,\overline{\dim }_{\Phi }F\right) .
\end{equation*}%
(ii) For all $E,F\subseteq X,$%
\begin{equation*}
\min (\underline{\dim }_{\Phi }E,\underline{\dim }_{\Phi }F)\leq \underline{%
\dim }_{\Phi }(E\cup F)\leq \max \left( \underline{\dim }_{\Phi }E,\overline{%
\dim }_{\Phi }F\right) .
\end{equation*}
\end{proposition}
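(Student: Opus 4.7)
My plan is to handle the four inequalities in Proposition \ref{union} in the natural order. For (i), the monotonicity part follows immediately from the inclusion $B(z,R) \cap E \subseteq B(z,R) \cap F$ for $z \in E \subseteq F$, which transfers any cover of the latter to one of the former. Finite stability then reduces via monotonicity to showing $\overline{\dim}_\Phi(E \cup F) \leq \max(\overline{\dim}_\Phi E, \overline{\dim}_\Phi F)$. I would fix $\alpha$ strictly greater than this max, extract constants $(c_1^E,c_2^E)$ and $(c_1^F,c_2^F)$ from the two upper $\Phi$-dimension definitions, and use $N_r(B(z,R)\cap(E\cup F)) \leq N_r(B(z,R)\cap E) + N_r(B(z,R)\cap F)$. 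For $z\in E$ the $E$-bound applies directly; for the $F$-term I pick $z'\in B(z,R)\cap F$ (if nonempty), use $B(z,R)\cap F \subseteq B(z',2R)\cap F$, and apply the $F$-bound at center $z'$ with radius $2R$. The admissibility of $(r,2R)$ for $F$ invokes the defining monotonicity of $x\mapsto x^{1+\Phi(x)}$, so $r\leq R^{1+\Phi(R)}\leq (2R)^{1+\Phi(2R)}$, and the doubling absorbs only a factor $2^\alpha$ into the constant.

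For the lower bound in (ii), I fix $\alpha < \min(\underline{\dim}_\Phi E,\underline{\dim}_\Phi F)$ and extract matching lower-bound constants from both definitions. For arbitrary $z\in E\cup F$, say $z\in E$, monotonicity of $N_r$ on nested sets gives $N_r(B(z,R)\cap (E\cup F)) \geq N_r(B(z,R)\cap E) \geq c_2^E (R/r)^\alpha$, and symmetrically for $z\in F$. Taking $\min(c_2^E,c_2^F)$ yields a uniform lower bound, proving $\underline{\dim}_\Phi(E\cup F) \geq \alpha$.

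The main obstacle is the upper bound in (ii), which I would handle by an intermediate-exponent trick. Fix $\alpha > \max(\underline{\dim}_\Phi E, \overline{\dim}_\Phi F)$ and choose $\alpha'$ strictly between these. Since $\alpha' > \overline{\dim}_\Phi F$, an upper bound $N_r(B(z',R')\cap F) \leq c_4(R'/r)^{\alpha'}$ holds on the admissible scale range. Since $\alpha' > \underline{\dim}_\Phi E$, the lower-bound inequality for $E$ with exponent $\alpha'$ fails, so for any prescribed (small) $c_2''$ there exist $z_0\in E$ and admissible $r,R$ with $N_r(B(z_0,R)\cap E) < c_2''(R/r)^{\alpha'}$. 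Combining by $N_r(A\cup B)\leq N_r(A)+N_r(B)$ and the doubling trick from (i) to apply the $F$-bound at a chosen center in $B(z_0,R)\cap F$, I obtain $N_r(B(z_0,R)\cap (E\cup F)) < (c_2'' + 2^{\alpha'} c_4)(R/r)^{\alpha'}$. The crucial observation is that $N_r \geq 1$ forces $(R/r)^{\alpha'} > 1/c_2''$, hence $(R/r)^{\alpha' - \alpha} < (c_2'')^{(\alpha - \alpha')/\alpha'}$, which tends to $0$ as $c_2''\to 0$ because $\alpha > \alpha'$. Thus, given any $c_2$ fixed in advance, choosing $c_2''$ small enough yields $N_r(B(z_0,R)\cap(E\cup F)) < c_2(R/r)^\alpha$, witnessing the failure of the lower $\Phi$-dimension inequality for $E\cup F$ at exponent $\alpha$. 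A direct comparison at matching exponent $\alpha' = \alpha$ fails precisely because the fixed contribution $2^\alpha c_4$ from $F$ cannot be shrunk below an arbitrarily small $c_2$; dropping the exponent to $\alpha' < \alpha$ and paying a small power of $R/r$ (which is large because the $E$-witnesses force it to be) is exactly what resolves this asymmetry.
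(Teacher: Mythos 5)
Your proof of (i) and of the lower bound in (ii) follows the same route as the paper: monotonicity of $N_r$ under inclusion, subadditivity of $N_r$ over unions, and the re-centering trick $B(z,R)\cap F\subseteq B(y,2R)\cap F$ with the admissibility of $(r,2R)$ coming from the defining monotonicity of $x\mapsto x^{1+\Phi(x)}$ (together with a small adjustment of the threshold $c_1$ so that $2R$ stays in range). These parts are correct and match the paper.

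For the upper bound in (ii), your argument is a more careful version of the paper's. The paper selects admissible triples $(z_j,r_j,R_j)$ witnessing the failure of the lower $\Phi$-bound for $E$ at exponent $d_1+\varepsilon$, adds the fixed-constant upper bound for $F$, and stops at $N_{r_j}(B(z_j,R_j)\cap(E\cup F))\leq C\,(R_j/r_j)^{d+\varepsilon}$. Because $\underline{\dim}_\Phi$ is a supremum over exponents $\alpha$ for which lower bounds hold \emph{with some} constant, a bound with one fixed $C$ does not by itself rule out $\alpha=d+\varepsilon$: one must additionally know that $R_j/r_j$ can be made arbitrarily large along the witness sequence, so that the cost of lowering the exponent kills the fixed constant. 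You make exactly this point explicit: since $N_r\geq 1$, the $E$-witnesses at shrinking constant $c_2''$ force $(R/r)^{\alpha'}>1/c_2''$, and then the intermediate exponent $\alpha'\in(\max(\underline{\dim}_\Phi E,\overline{\dim}_\Phi F),\alpha)$ lets you pay a power of $R/r$ to absorb the fixed $F$-constant. Your closing remark about why a direct comparison at $\alpha'=\alpha$ fails is precisely the subtlety the paper's terse write-up leaves implicit. So: same decomposition, same re-centering, but your treatment of the quantifier over constants is fully spelled out and closes a small gap in the paper's exposition.
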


\begin{proof}
(i) The fact that $\overline{\dim }_{\Phi }\left( E\cup F\right) \geq \max
\left( \overline{\dim }_{\Phi }E,\overline{\dim }_{\Phi }F\right) $ follows
easily from the observation that if $z\in E\cup F,$ say $z\in E,$ then 
\begin{equation*}
N_{r}(B(z,R)\cap (E\cup F))\geq N_{r}(B(z,R)\cap E).
\end{equation*}

To see the reverse inequality, first note that 
\begin{equation*}
N_{r}(B(z,R)\cap (E\cup F))\leq N_{r}(B(z,R)\cap E)+N_{r}(B(z,R)\cap F).
\end{equation*}%
Fix $\varepsilon >0$ and assume $z\in E$ and $r\leq R^{1+\Phi (R)}$ for
small $R$. Then there is a constant $c$ such that $N_{r}(B(z,R)\cap E)\leq
c\left( \frac{R}{r}\right) ^{d_{1}+\varepsilon }$ where $d_{1}=\overline{%
\dim }_{\Phi }E$. If $B(z,R)\cap F$ is empty, then trivially 
\begin{equation*}
N_{r}\left( B(z,R)\cap (E\cup F)\right) \leq c\left( \frac{R}{r}\right)
^{d_{1}+\varepsilon }.
\end{equation*}%
Otherwise, there is some $y\in B(z,R)\cap F,$ and as $B(z,R)\cap F\subseteq
B(y,2R)\cap F$ we have 
\begin{equation*}
N_{r}(B(z,R)\cap F)\leq N_{r}(B(y,2R)\cap F)\leq c^{\prime }\left( \frac{R}{r%
}\right) ^{d_{2}+\varepsilon }
\end{equation*}%
for $d_{2}=\overline{\dim }_{\Phi }F$. In either case, there is a constant $%
C $ such that 
\begin{equation*}
N_{r}(B(z,R)\cap (E\cup F))\leq C\left( \frac{R}{r}\right) ^{d+\varepsilon }
\end{equation*}%
for $d=\max (\overline{\dim }_{\Phi }E,\overline{\dim }_{\Phi }F)$ for all $%
z\in E\cup F$ and $r\leq R^{1+\Phi (R)}$, proving that $\overline{\dim }%
_{\Phi }\left( E\cup F\right) \leq d$.

(ii) The lower bound is obvious. For the upper bound, choose $z_{j}$ and $%
r_{j}\leq R_{j}^{1+\Phi (R_{j})}$ such that $N_{r_{j}}(B(z_{j},R_{j})\cap
E)\leq c_{1}\left( \frac{R_{j}}{r_{j}}\right) ^{d_{1}+\varepsilon }$ where $%
d_{1}=\underline{\dim }_{\Phi }E$ and $\varepsilon >0$ is fixed. Choosing $%
y_{j}\in B(z_{j},R_{j})\cap F$ (if this set is non-empty), we have 
\begin{eqnarray*}
N_{r_{j}}(B(z_{j},R_{j})\cap (E\cup F)) &\leq &N_{r_{j}}(B(z_{j},R_{j})\cap
E)+N_{r_{j}}(B(y_{j},2R_{j})\cap F) \\
&\leq &c_{1}\left( \frac{R_{j}}{r_{j}}\right) ^{d_{1}+\varepsilon
}+c_{2}\left( \frac{R_{j}}{r_{j}}\right) ^{d_{2}+\varepsilon }\leq C\left( 
\frac{R_{j}}{r_{j}}\right) ^{d+\varepsilon }
\end{eqnarray*}%
where $d_{2}=\overline{\dim }_{\Phi }F$ and $d=\max (d_{1},d_{2})$.
\end{proof}

\begin{remark}
We will always assume the underlying metric space $X$ is \textbf{doubling},
which means that there is a constant $M\geq 1$ so that for any $R>0$, each
ball of radius $R$ can be covered with at most $M$ balls of radius $R/2$.
The least such $M$ is called the \textbf{doubling constant} of the space.
This condition is equivalent to saying the space has bounded upper Assouad
dimension, \cite{Heinonen}. For example, if $E\subseteq \mathbb{R}^{d},$
then $\dim _{A}E\leq d$.

The doubling assumption ensures that in the definitions of the $\Phi $%
-dimensions the covering number $N_{r}(B(z,R)\cap E)$ could be replaced by
the packing number $P_{r}(B(z,R)\cap E)$, the maximum number of disjoint
balls of radius $r$, centred in $B(z,R)\cap E$. This is because the packing
and covering numbers are comparable: 
\begin{equation*}
\frac{1}{M}N_{r}(F)\leq P_{r}(F)\leq M\cdot N_{r/2}(F)\text{ for any }%
F\subseteq X.
\end{equation*}
\end{remark}

\subsection{Relationships between dimensions}

We begin by recalling the relationships between Assouad-like dimensions and
various classical dimensions. We denote by $\dim _{H}$, $\underline{\dim }%
_{B}$ and $\overline{\dim }_{B}$ the Hausdorff, lower box and upper box
dimensions respectively. The box dimensions are defined for bounded sets and
satisfy 
\begin{equation*}
\dim _{H}E\leq \underline{\dim }_{B}E\leq \overline{\dim }_{B}E.
\end{equation*}%
We refer to \cite{Fal} for the definitions and basic properties of these
dimensions.

Clearly we have the following relationships:%
\begin{equation*}
\dim _{L}E\leq \dim _{qL}E\leq \underline{\dim }_{B}E\leq \overline{\dim }%
_{B}E\leq \dim _{qA}E\leq \dim _{A}E
\end{equation*}%
and 
\begin{equation*}
\dim _{L}E\leq \underline{\dim }_{\Phi }E\leq \overline{\dim }_{\Phi }E\leq
\dim _{A}E.
\end{equation*}%
Since $\underline{\dim }_{B}E\geq \dim _{H}E,$ we obviously have 
\begin{equation*}
\dim _{H}E\leq \dim _{qA}E
\end{equation*}%
and for closed sets $E$ it is also true that 
\begin{equation*}
\dim _{qL}E\leq \dim _{H}E;
\end{equation*}%
see \cite{GH} (and \cite{L2} for the earlier result, $\dim _{L}E\leq \dim
_{H}E$). This inequality is not true in general, for example, if $\mathbb{Q}$
denotes the set of rational numbers in $[0,1]$, then $\dim _{H}\mathbb{Q}=0,$
but $\dim _{qL}\mathbb{Q}=\dim _{qL}\mathbb{R=}1$.

Obviously, if $\Phi (x)\leq \Psi (x)$ for all $x\in (0,1)$, then 
\begin{equation*}
\underline{\dim }_{\Phi }E\leq \underline{\dim }_{\Psi }E\ \ \text{ and }\ \ 
\overline{\dim }_{\Psi }\leq \overline{\dim }_{\Phi }E.
\end{equation*}%
Consequently, if $\Phi (x)\rightarrow 0$ as $x\rightarrow 0$, then the $\Phi 
$-dimensions give a range of dimensions between the Assouad and
quasi-Assouad type dimensions: 
\begin{equation*}
\dim _{L}E\leq \underline{\dim }_{\Phi }E\leq \dim _{qL}E\leq \dim
_{qA}E\leq \overline{\dim }_{\Phi }E\leq \dim _{A}E\text{ .}
\end{equation*}

\begin{remark}
We remark that in Section \ref{Examples} of this paper many examples are
constructed which demonstrate strictness in these inequalities. These are
based on the formulas given in Theorem \ref{Cantorformula} for the $\Phi $%
-dimensions of Cantor sets. The reader can also refer to \cite[Ex.\thinspace
16]{GH} or \cite[Ex.\thinspace 1.18]{LX} for similar constructions
illustrating the strictness of the relationship between the quasi-Assouad
and Assouad dimensions.
\end{remark}

Here are some additional facts about the relationships between these
dimensions.

\begin{proposition}
\label{Prop:box}(i) For any $\Phi \in \mathcal{D},$ 
\begin{equation*}
\underline{\dim }_{\Phi }E\leq \underline{\dim }_{B}E\leq \overline{\dim }%
_{B}E\leq \overline{\dim }_{\Phi }E.
\end{equation*}%
(ii) If $\Phi (x)\rightarrow \infty $ as $x\rightarrow 0$, then $\overline{%
\dim }_{\Phi }E=\overline{\dim }_{B}E$. If, in addition, $\underline{\dim }%
_{\Phi }E>0,$ then $\underline{\dim }_{\Phi }E=\underline{\dim }_{B}E$.
\end{proposition}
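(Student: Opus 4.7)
My plan is to split the proposition into its two parts, handling part (i) by a direct finite-covering argument, the upper case of part (ii) by exploiting that $\Phi(R)\to\infty$ makes the constraint $r\leq R^{1+\Phi(R)}$ arbitrarily restrictive, and the lower case of part (ii) by combining the uniform local density provided by the hypothesis $\underline{\dim}_{\Phi}E>0$ with a packing/separation estimate and the global lower box asymptotics.

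For part (i), fix a small $R_{0}<c_{1}$. To obtain $\overline{\dim}_{B}E\leq \overline{\dim}_{\Phi}E$, cover the bounded set $E$ by finitely many balls of radius $R_{0}$, enlarged if necessary so that each is contained in $B(z,2R_{0})$ for some $z\in E$, apply the definition of $\overline{\dim}_{\Phi}E$ to each, and sum to obtain $N_{r}(E)\lesssim r^{-\alpha}$ for any $\alpha>\overline{\dim}_{\Phi}E$ and $r\leq R_{0}^{1+\Phi(R_{0})}$. For $\underline{\dim}_{\Phi}E\leq \underline{\dim}_{B}E$, pick any $z_{0}\in E$ and use
\[
N_{r}(E)\;\geq\; N_{r}(B(z_{0},R_{0})\cap E)\;\geq\; c_{2}(R_{0}/r)^{\alpha}
\]
for $\alpha<\underline{\dim}_{\Phi}E$; dividing by $\log(1/r)$ and taking $\liminf$ as $r\to 0$ recovers the lower box exponent.

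For the upper case of part (ii), let $\alpha>\overline{\dim}_{B}E$, so $N_{r}(E)\leq Cr^{-\alpha}$ for small $r$. For any $s>\alpha$, the trivial bound $N_{r}(B(z,R)\cap E)\leq N_{r}(E)\leq Cr^{-\alpha}$ will yield $N_{r}(B(z,R)\cap E)\leq C'(R/r)^{s}$ provided $r^{s-\alpha}\leq (C'/C)R^{s}$. The constraint $r\leq R^{1+\Phi(R)}$ gives $r^{s-\alpha}\leq R^{(1+\Phi(R))(s-\alpha)}$, and this is at most $R^{s}$ precisely when $\Phi(R)(s-\alpha)\geq \alpha$, which holds for all $R$ below some threshold since $\Phi\to\infty$. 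Thus $\overline{\dim}_{\Phi}E\leq s$ for every $s>\overline{\dim}_{B}E$, and the desired equality follows.

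For the lower case of part (ii) --- which I anticipate to be the main obstacle --- note that $N_{r}(B(z,R)\cap E)\leq N_{r}(E)$ is the wrong direction to transfer a global lower bound into a local one, so the positivity hypothesis $\underline{\dim}_{\Phi}E>0$ is essential. Fix $\gamma\in (0,\underline{\dim}_{\Phi}E)$ to obtain a uniform starting estimate $\inf_{z}N_{r}(B(z,R)\cap E)\geq c(R/r)^{\gamma}$. I would then combine this with a disjoint-ball packing step: a maximal $3R$-separated subset $\{y_{j}\}\subseteq E$ has cardinality comparable to $N_{R}(E)$, and the pieces $B(y_{j},R)\cap E$ are pairwise $R$-separated, hence $2r$-separated once $r\leq R^{1+\Phi(R)}\ll R$ (which holds for $R$ small because $\Phi\to\infty$), so covering numbers are additive and
\[
N_{r}(E)\;\gtrsim\; N_{R}(E)\,(R/r)^{\gamma}.
\]
To upgrade the exponent from $\gamma$ to an arbitrary $\beta<\underline{\dim}_{B}E$, I would exploit the flexibility in the constant $c_{1}$ of Definition \ref{DefnPhidim}. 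Choosing $c_{1}>\mathrm{diam}(E)$ handles the regime $R>\mathrm{diam}(E)$, where $B(z,R)\cap E=E$ and the required bound reduces to $N_{r}(E)\geq c_{0}r^{-\beta}\geq c_{0}c_{1}^{-\beta}(R/r)^{\beta}$. For the complementary regime $R\leq \mathrm{diam}(E)$, a bootstrap using the displayed packing inequality at successive admissible scales --- absorbing the polynomial losses via the fast growth of $\Phi(R)$ and the lower box asymptotics of $N_{R}(E)$ --- should deliver the improved exponent $\beta$ uniformly in $z$. Rigorously verifying this bootstrap, and in particular showing that the iterated loss is dominated by $\Phi(R)\to\infty$, is the main technical difficulty I anticipate.
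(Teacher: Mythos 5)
Parts (i) and the upper half of (ii) are fine. Your cover-and-sum argument for $\overline{\dim}_{B}E\leq\overline{\dim}_{\Phi}E$ is more direct than the paper's contradiction argument (which takes $r=R^{\beta(R)}$ with $\beta(R)=\max(1+\Phi(R),4)$), and your self-contained deduction of $\overline{\dim}_{\Phi}E\leq\overline{\dim}_{B}E$ when $\Phi\to\infty$ sidesteps the paper's reliance on the external convergence result for the $\theta$-Assouad spectrum.

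The lower half of (ii) is the genuine gap, and the bootstrap you anticipate cannot be made to work because the equality $\underline{\dim}_{\Phi}E=\underline{\dim}_{B}E$ is in fact false as stated. Take $E=C_{1}\cup C_{2}\subset\mathbb{R}$, with $C_{1}\subset[0,1]$ the middle-thirds Cantor set and $C_{2}\subset[2,3]$ the self-similar Cantor set with two branches of ratio $1/9$. Then $E$ is uniformly perfect with no isolated points, so $\underline{\dim}_{\Phi}E>0$. For any dimension function $\Phi$ and any $R<1$ the two pieces are invisible to each other, and by self-similarity $\inf_{z\in E}N_{r}(B(z,R)\cap E)\sim(R/r)^{\log 2/\log 9}$ (the infimum is attained at centres in the sparse piece $C_{2}$), so $\underline{\dim}_{\Phi}E=\log 2/\log 9$; on the other hand $N_{r}(E)=N_{r}(C_{1})+N_{r}(C_{2})\sim r^{-\log 2/\log 3}$, so $\underline{\dim}_{B}E=\log 2/\log 3$. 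The conceptual obstruction is exactly what you flag as the main technical difficulty: $\underline{\dim}_{B}E$ is governed by the locally densest part of $E$, whereas $\underline{\dim}_{\Phi}E$, being an infimum over centres, is governed by the locally sparsest part, and no choice of $\Phi$ can transfer the global count into a local lower bound near a point of $C_{2}$, because a small ball around such a point never sees $C_{1}$. For what it is worth, the paper's own proof of this case is also flawed: it only establishes $\underline{\dim}_{B}E\geq\underline{\dim}_{\Phi}E$, which is a restatement of part (i), and then "obtains equality" by citing part (i) again, never proving the needed reverse inequality; so the difficulty you ran into is not a defect of your approach.
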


\begin{remark}
Since $\underline{\dim }_{\Phi }E=0$ if $E$ has an isolated point, it need
not be true that $\underline{\dim }_{\Phi }E=\underline{\dim }_{B}E$ under
only the assumption that $\Phi (x)\rightarrow \infty $.
\end{remark}

\begin{proof}
(i) As our metric space is assumed to be doubling, we even have $\overline{%
\dim }_{A}E<\infty $. Thus, we may suppose $b=\overline{\dim }_{B}E$ and $%
b-\varepsilon =\overline{\dim }_{\Phi }E$ for some $\varepsilon >0$. Given
small $r$, choose $R$ such that $r=R^{\beta (R)}$ where $\beta (R)=\max
(1+\Phi (R),4)$. It is easy to see that%
\begin{equation*}
N_{r}(E)\leq \sup_{z\in E}N_{r}\left( B(z,R)\cap E\right) \cdot N_{R}(E).
\end{equation*}%
If $R$ is small enough, then for some constant $c,$ 
\begin{equation*}
N_{r}(E)\leq c\left( \frac{R}{r}\right) ^{b-\varepsilon
/2}R^{-(b+\varepsilon )}=R^{-\beta (R)(b-\varepsilon /2+3\varepsilon
/(2\beta (R))}\leq cr^{-(b-\varepsilon /8)}.
\end{equation*}%
This implies $\overline{\dim }_{B}E\leq b-\varepsilon /8$, which is a
contradiction.

The arguments are similar to show the lower box dimension is an upper bound
on the lower $\Phi $-dimensions, but using packing numbers instead of
covering numbers, both for the lower $\Phi $-dimensions and the lower box
dimension.

(ii) If $\Phi(x) \rightarrow \infty $ as $x\rightarrow 0,$ then $\Phi (x)>p$
for any $p,$ provided $x$ is sufficiently small. Thus the monotonicity of
the $\Phi $-dimensions implies that if $\theta =(1+p)^{-1}$ and $\Psi (x)=p$%
, then $\overline{\dim }_{B}E\leq \overline{\dim }_{\Phi }E\leq \overline{%
\dim }_{\Psi }E=\overline{\dim }_{A}^{\theta }E.$ The result for the upper
dimension then follows since the upper $\theta $-Assouad spectrum converges
to $\overline{\dim }_{B}E$ as $p\rightarrow \infty $ ($\theta \rightarrow 0$%
), as a consequence of \cite[Theorem 2.1]{FTale} and \cite[Proposition 3.1]%
{FYAdv}.

For the lower dimension case, suppose that $\underline{\dim }_{\Phi }E=d>0$
and let $0<\epsilon <d/2$ be given. Since $\Phi (x)\rightarrow \infty $, we
have 
\begin{equation*}
\frac{R^{d/\epsilon -1}}{r}\geq \frac{R^{d/\epsilon -1}}{R^{1+\Phi (R)}}%
\rightarrow \infty \text{ implies }\frac{\left( \frac{R}{r}\right)
^{d-\epsilon }}{\left( \frac{1}{r}\right) ^{d-2\epsilon }}\rightarrow \infty
\end{equation*}%
as $R\rightarrow 0$, provided $r\leq R^{1+\Phi (R)}$. We note that for $%
r\leq R^{1+\Phi (R)}\leq \text{diam}(E)$, 
\begin{equation*}
N_{r}(E)\geq N_{r}(B(z,R)\cap E)\geq C\left( \frac{R}{r}\right) ^{d-\epsilon
}\geq Cr^{2\epsilon -d}
\end{equation*}%
and thus $\underline{\dim }_{B}E\geq d-2\epsilon $ for all such $\epsilon $.
This implies that $\underline{\dim }_{B}E\geq d$. Since we know that $%
\underline{\dim }_{\Phi }E\leq \underline{\dim }_{B}E$, we obtain equality.
\end{proof}

It is natural to ask when two dimension functions give rise to the same
dimensions for all sets $E$.

\begin{proposition}
\label{ADim}If there is some constant $c$ such that $\Phi (x)\leq c/|\log x|$%
, then $\dim _{L}E=\underline{\dim }_{\Phi }E$ and $\dim _{A}E=\overline{%
\dim }_{\Phi }E$.
\end{proposition}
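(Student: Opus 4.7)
The plan is to exploit the hypothesis $\Phi(x)\leq c/|\log x|$ to show that the constraint $r\leq R^{1+\Phi(R)}$ only differs from the unrestricted Assouad constraint $r\leq R$ by a bounded multiplicative factor, so that the remaining ``gap'' of scales can be absorbed using the doubling property.

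The key computation is that $\Phi(R)\leq c/|\log R|$ gives $\Phi(R)|\log R|\leq c$, hence
\begin{equation*}
R^{1+\Phi(R)}=R\cdot e^{-\Phi(R)|\log R|}\geq e^{-c}R.
\end{equation*}
In particular, any pair with $r\leq e^{-c}R$ satisfies $r\leq R^{1+\Phi(R)}$ and therefore falls within the range over which the $\Phi$-dimension bounds apply. Since the trivial inequalities $\dim_L E\leq \underline{\dim}_\Phi E$ and $\overline{\dim}_\Phi E\leq \dim_A E$ hold (the $\Phi$-range of scales is smaller than the Assouad range), only the reverse inequalities need to be established.

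For the upper case, I would fix $\varepsilon>0$, set $\alpha=\overline{\dim}_\Phi E$, and use the definition to obtain constants $c_1,c_2$ with $N_r(B(z,R)\cap E)\leq c_2(R/r)^{\alpha+\varepsilon}$ whenever $r\leq R^{1+\Phi(R)}\leq R<c_1$. Then for arbitrary $r\leq R<c_1$ I would split into two cases. If $r\leq e^{-c}R$, the $\Phi$-bound applies directly by the observation above. If $e^{-c}R<r\leq R$, then $R/r<e^c$ is bounded, and by iterating the doubling property at most $\lceil c/\log 2\rceil$ times we obtain a uniform bound $N_r(B(z,R)\cap E)\leq M^{\lceil c/\log 2\rceil}$, which, since $(R/r)^{\alpha+\varepsilon}\geq 1$, can be absorbed into a larger constant $c_2'$. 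Taking $\varepsilon\to 0$ yields $\dim_A E\leq \overline{\dim}_\Phi E$.

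For the lower case I would proceed dually: set $\alpha=\underline{\dim}_\Phi E$ and fix $\varepsilon>0$, obtaining $N_r(B(z,R)\cap E)\geq c_2(R/r)^{\alpha-\varepsilon}$ for $r\leq R^{1+\Phi(R)}\leq R<c_1$ and $z\in E$. For arbitrary $r\leq R<c_1$ the case $r\leq e^{-c}R$ is again immediate, while for $e^{-c}R<r\leq R$ the trivial bound $N_r(B(z,R)\cap E)\geq 1$ together with $(R/r)^{\alpha-\varepsilon}\leq e^{c(\alpha-\varepsilon)}$ gives the required inequality after replacing $c_2$ by $\min(c_2,e^{-c\alpha})$. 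Letting $\varepsilon\to 0$ gives $\dim_L E\geq \underline{\dim}_\Phi E$.

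I do not anticipate a genuine obstacle: the result is essentially quantitative, and the only delicate step is confirming that the ``buffer'' region $e^{-c}R<r\leq R$ can be absorbed into the multiplicative constants, which is guaranteed by the standing doubling assumption. The main care is in tracking constants so that the bound on $N_r(B(z,R)\cap E)$ obtained in the two cases can be unified into a single Assouad-type estimate.
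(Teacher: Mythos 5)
Your proposal is correct and takes essentially the same approach as the paper: the paper's one-line proof rests precisely on the observation that $\Phi(R)\leq c/|\log R|$ forces $R^{\Phi(R)}$ to be bounded away from $0$ (your $R^{1+\Phi(R)}\geq e^{-c}R$), and your case split plus the doubling argument simply fills in the routine details that the paper leaves implicit.
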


\begin{proof}
This is simply due to the fact that if $\Phi (x)\leq c/|\log x|$, then there
are positive constants $a,b$ such that $a<R^{\Phi (R)}<b$ for all $R\in
(0,1) $.
\end{proof}

\begin{proposition}
\label{Propo-equal} \label{DimEqual}(i) Suppose $\Phi _{1},\Phi _{2}\in 
\mathcal{D}$ and $\Phi _{1}(x)/\Phi _{2}(x)\rightarrow 1$ as $x\rightarrow 0$%
. Then $\overline{\dim }_{\Phi _{1}}E=\overline{\dim }_{\Phi _{2}}E$ and $%
\underline{\dim }_{\Phi _{1}}E=\underline{\dim }_{\Phi _{2}}E$ for all sets
\thinspace $E$.

(ii) \label{cont}Assume $g:(0,1)\rightarrow \mathbb{R}^{+}$ is continuous at 
$t_{0}$ and $g(t_{0})\neq 0$. Suppose $\Phi \in \mathcal{D}$ and put $\Phi
_{t}(x)=g(t)\Phi (x)$. For any set $E,$ $\overline{\dim }_{\Phi
_{t}}E\rightarrow \overline{\dim }_{\Phi _{t_{0}}}E$ as $t\rightarrow t_{0}$%
. The same statement holds for the lower dimensions.
\end{proposition}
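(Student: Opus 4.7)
The plan is to first reduce part (i) to the case where $\Phi_1$ and $\Phi_2$ are pointwise comparable. Observing that if $\Phi_1/\Phi_2 \to 1$ then also $\min(\Phi_1,\Phi_2)/\max(\Phi_1,\Phi_2) \to 1$, and that monotonicity of the $\Phi$-dimensions in the dimension function gives $\overline{\dim}_{\max(\Phi_1,\Phi_2)}E \le \overline{\dim}_{\Phi_i}E \le \overline{\dim}_{\min(\Phi_1,\Phi_2)}E$ (with the opposite inequalities for the lower dimensions), it suffices to treat the case $\Phi_1 \le \Phi_2$ pointwise. In that case, monotonicity alone gives $\overline{\dim}_{\Phi_2}E \le \overline{\dim}_{\Phi_1}E$ and $\underline{\dim}_{\Phi_1}E \le \underline{\dim}_{\Phi_2}E$, so the work is in the reverse inequalities.

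For the upper dimension, I fix $\alpha > \overline{\dim}_{\Phi_2}E$ with its associated constants and a small $\delta>0$, and take $R$ small enough that $\Phi_1(R) \ge (1-\delta)\Phi_2(R)$. Given $0<r \le R^{1+\Phi_1(R)}$, I split into Case A, where $r \le R^{1+\Phi_2(R)}$ and the $\Phi_2$-covering bound applies directly, and Case B, where $R^{1+\Phi_2(R)} < r$; in Case B, monotonicity of $N_r$ in $r$ yields $N_r(B(z,R)\cap E) \le c_2 R^{-\alpha\Phi_2(R)}$, which I compare with $(R/r)^{\alpha/(1-\delta)} \ge R^{-\alpha\Phi_1(R)/(1-\delta)} \ge R^{-\alpha\Phi_2(R)}$, the last step using $\Phi_1(R) \ge (1-\delta)\Phi_2(R)$. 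This gives $\overline{\dim}_{\Phi_1}E \le \alpha/(1-\delta)$, and sending $\alpha \to \overline{\dim}_{\Phi_2}E^{+}$ and $\delta \to 0^{+}$ concludes.

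For the lower dimension, Case A is again direct, but Case B is the main obstacle: the $\Phi_2$-lower bound is available only for radii up to $R^{1+\Phi_2(R)}<r$, and monotonicity of $N_r$ in $r$ only gives a wrong-direction inequality. To reverse this, I invoke the doubling property of $X$ (assumed throughout the paper): each of the $N_r(B(z,R)\cap E)$ covering balls of radius $r$ is itself covered by at most $K(r/R^{1+\Phi_2(R)})^D$ balls of radius $R^{1+\Phi_2(R)}$, whence $N_{R^{1+\Phi_2(R)}}(B(z,R)\cap E) \le K\,N_r(B(z,R)\cap E)(r/R^{1+\Phi_2(R)})^D$. Since $r/R^{1+\Phi_2(R)} \le R^{\Phi_1(R)-\Phi_2(R)} \le R^{-\delta\Phi_2(R)}$ in Case B, combining this with the $\Phi_2$-lower bound delivers $N_r(B(z,R)\cap E) \ge c'(R/r)^{\alpha - D\delta}$, hence $\underline{\dim}_{\Phi_1}E \ge \alpha - D\delta$; the conclusion follows by letting $\alpha$ and $\delta$ vary.

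For part (ii), writing $\Phi_t = c_t \Phi_{t_0}$ with $c_t = g(t)/g(t_0) \to 1$ by continuity, the same case-split arguments yield a sandwich of the form $\overline{\dim}_{\Phi_{t_0}}E/\max(c_t,1) \le \overline{\dim}_{\Phi_t}E \le \overline{\dim}_{\Phi_{t_0}}E/\min(c_t,1)$, and analogously for the lower dimension, both sides converging to the $t_0$-dimension as $c_t \to 1$. Part (i) does not apply directly because the ratio $\Phi_t/\Phi_{t_0} = c_t$ is constant in $x$ rather than tending to $1$ as $x \to 0$; however, exactly the same covering and doubling estimates go through, with the role of $\delta$ played by $|c_t - 1|$. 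The chief difficulty throughout is the lower-dimension Case B, where correctly exploiting the doubling property is what lets one reverse the direction of the $N_r$-comparison.
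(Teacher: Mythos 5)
Your proof is correct and follows essentially the same route as the paper's. The paper's Lemma \ref{lemma:cont} carries out exactly your case-split: the trivial case where $r$ already satisfies the other function's radius constraint, and the non-trivial case handled for the upper dimension by monotonicity of $N_r$ and for the lower dimension by the doubling property (the paper iterates the doubling constant $M$ to get $M^{\lceil T\rceil}$, which is your $K(s/s')^D$ with $D=\log_2 M$); the only cosmetic differences are that you normalize to the pointwise-comparable case via $\min$ and $\max$ of $\Phi_1,\Phi_2$ rather than appealing to symmetry, and you keep two separate small parameters $(\alpha-d,\delta)$ where the paper merges them into a single $\epsilon$.
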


From (i), we immediately deduce the following.

\begin{corollary}
\label{Coro:p} If $\Phi (x)\rightarrow \delta $ as $x\rightarrow 0$ for some 
$0<\delta <\infty $, then for $\theta =(1+\delta )^{-1}$, 
\begin{equation*}
\overline{\dim }_{\Phi }E=\overline{\dim }_{A}^{\theta }E\ \text{ and }\ 
\underline{\dim }_{\Phi }E=\underline{\dim }_{L}^{\theta }E.
\end{equation*}
\end{corollary}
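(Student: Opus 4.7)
The plan is to deduce the corollary directly from Proposition~\ref{Propo-equal}(i) together with part~(ii) of the Remark identifying the $\theta$-Assouad spectrum with a constant-function $\Phi$-dimension. Define the constant dimension function $\Phi_\theta(x) = \delta = 1/\theta - 1$. This is indeed in $\mathcal{D}$ since $x^{1+\delta}$ decreases to $0$ as $x \downarrow 0$ whenever $1+\delta > 0$, which holds because $\delta > 0$.

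Next, I would observe that because $\delta \neq 0$, the hypothesis $\Phi(x) \to \delta$ gives
\[
\frac{\Phi(x)}{\Phi_\theta(x)} = \frac{\Phi(x)}{\delta} \longrightarrow 1 \quad \text{as } x \to 0.
\]
Proposition~\ref{Propo-equal}(i) then yields
\[
\overline{\dim}_\Phi E = \overline{\dim}_{\Phi_\theta} E \quad \text{and} \quad \underline{\dim}_\Phi E = \underline{\dim}_{\Phi_\theta} E
\]
for every $E \subseteq X$.

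Finally, by Remark~(ii) in Section~\ref{sec:Notation}, the upper and lower $\Phi_\theta$-dimensions with $\Phi_\theta \equiv 1/\theta - 1$ coincide with $\overline{\dim}_A^\theta E$ and $\underline{\dim}_L^\theta E$ respectively (this already incorporates the supremum over $\psi \leq \theta$ in the definition of the spectrum proved in \cite{FTale,CWC}). Chaining the two identifications gives the desired equalities. Since everything is a direct appeal to results already established, there is essentially no obstacle; the only detail worth flagging is the verification that the constant $\delta$ qualifies as a dimension function and that the ratio $\Phi/\Phi_\theta$ is well defined near $0$, both of which follow from $\delta > 0$ and $\Phi$ being positive-valued.
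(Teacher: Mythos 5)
Your proposal is correct and matches the paper's own route: the paper introduces Corollary \ref{Coro:p} with the line ``From (i), we immediately deduce the following,'' which is exactly the argument you spell out — take $\Phi_\theta \equiv \delta$, observe $\Phi/\Phi_\theta \to 1$, apply Proposition \ref{Propo-equal}(i), and then invoke the identification of constant-$\Phi$ dimensions with the $\theta$-Assouad spectrum from Remark (ii).
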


The proof of the proposition is an easy consequence of the estimates in the
Lemma below.

\begin{lemma}
\label{lemma:cont} Let $\Phi $ and $\Psi $ be dimension functions and assume
that for some $\epsilon >0$ there exists $x_{0}>0$ such that $|\Phi (x)/\Psi
(x)-1|\leq \epsilon $ for all $0<x\leq x_{0}$.

(i) Choose $c_{1},c_{2}$, depending on $\epsilon $, such that 
\begin{equation*}
N_{r}(B(z,R)\cap E)\leq c_{2}\left( \frac{R}{r}\right) ^{(\overline{\dim }%
_{\Phi }E+\epsilon )}
\end{equation*}%
for all $z\in E$ and $r\leq R^{1+\Phi (R)}\leq $ $R\leq c_{1}$. Then, there
is a constant $C$ such that for any $z\in E$ and $0<r\leq R^{1+\Psi (R)}\leq
R\leq c_{1}$ we have 
\begin{equation}
N_{r}(B(z,R)\cap E)\leq C\left( \frac{R}{r}\right) ^{(\overline{\dim }_{\Phi
}E+\epsilon )(1+\epsilon )}  \label{upper}
\end{equation}

(ii) Analogously, chose $c_1, c_2$ such that 
\begin{equation*}
N_{r}(B(z,R)\cap E)\geq c_{2}\left( \frac{R}{r}\right) ^{(\overline{\dim }%
_{\Psi }E-\epsilon )}
\end{equation*}
for all $z\in E$ and $r\leq R^{1+\Psi (R)}\leq $ $R\leq c_{1}$. Then, there
is some $c>0$ such that for any $z\in E$ and $0<r\leq R^{1+\Phi (R)}\leq
R\leq c_{1}$, we have 
\begin{equation}
N_{r}(B(z,R)\cap E)\geq c\left( \frac{R}{r}\right) ^{\underline{\dim }_{\Psi
}E-\epsilon (1+\log _{2}M)},  \label{lower}
\end{equation}%
where $M$ is the doubling constant of the space.
\end{lemma}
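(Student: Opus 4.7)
The plan is to split both parts into two cases according to how the given scale $r$ compares to $R^{1+\Phi(R)}$ or $R^{1+\Psi(R)}$, so that the hypothesis at one pair of scales can be transferred to the other via an intermediate scale.

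For (i), set $d = \overline{\dim}_{\Phi} E + \epsilon$. If $r \leq R^{1+\Phi(R)}$, the hypothesis applies directly and gives $N_r(B(z,R)\cap E) \leq c_2 (R/r)^d \leq c_2 (R/r)^{d(1+\epsilon)}$. Otherwise we are in the range $R^{1+\Phi(R)} < r \leq R^{1+\Psi(R)}$, which forces $\Phi(R) > \Psi(R)$. Put $r_1 = R^{1+\Phi(R)} < r$; by monotonicity $N_r \leq N_{r_1}$, and the hypothesis at scale $r_1$ yields $N_{r_1}(B(z,R)\cap E) \leq c_2 R^{-d\Phi(R)}$. The key step is to bound this by $(R/r)^{d(1+\epsilon)}$: since $\Phi(R) \leq (1+\epsilon)\Psi(R)$ and $R/r \geq R^{-\Psi(R)}$, one gets $R^{-d\Phi(R)} \leq R^{-d(1+\epsilon)\Psi(R)} \leq (R/r)^{d(1+\epsilon)}$.

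For (ii), let $d_L$ denote the exponent from the hypothesis and set $r' = R^{1+\Psi(R)}$. If $r \leq r'$, the hypothesis applies directly and even gives the sharper exponent $d_L - \epsilon$. The interesting case is $r' < r \leq R^{1+\Phi(R)}$ (so $\Psi(R) > \Phi(R)$). Here the doubling property lets us cover each ball of radius $r$ by at most $M^{\lceil \log_2(r/r') \rceil}$ balls of radius $r'$, so $N_{r'}(B(z,R) \cap E) \leq N_r(B(z,R) \cap E) \cdot M^{\lceil \log_2(r/r') \rceil}$. Combining with the hypothesis at scale $r'$ gives $N_r(B(z,R) \cap E) \geq c_2 (R/r')^{d_L - \epsilon}/M^{\lceil \log_2(r/r') \rceil}$.

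The crucial estimate is then $\log_2(r/r') \leq (\Psi(R)-\Phi(R))|\log_2 R|$ while $\log_2(R/r') = \Psi(R)|\log_2 R|$, so that $\log_2(r/r') \leq \epsilon \log_2(R/r')$, using $|\Psi - \Phi|/\Psi \leq \epsilon$. Hence $M^{\lceil \log_2(r/r') \rceil} \leq M \cdot (R/r')^{\epsilon \log_2 M}$, which produces a lower bound of the form $(c_2/M)(R/r')^{d_L - \epsilon(1+\log_2 M)}$. Since $R/r < R/r'$ and we may assume the exponent is positive (otherwise the estimate is trivial after adjusting constants), this dominates $(c_2/M)(R/r)^{d_L - \epsilon(1+\log_2 M)}$, as required. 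The main obstacle is exactly this second case of (ii): the hypothesis only controls covering numbers at the finer scale $r' < r$, so the doubling constant must be used to translate back to scale $r$, and the careful bookkeeping of this conversion is what forces the extra $\epsilon \log_2 M$ loss recorded in the conclusion.
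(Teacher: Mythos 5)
Your proposal is correct and follows essentially the same two-case decomposition as the paper's proof: in (i) you pass to the coarser scale $R^{1+\Phi(R)}$ and absorb the discrepancy into the exponent using $\Phi\leq(1+\epsilon)\Psi$ and $R/r\geq R^{-\Psi(R)}$, and in (ii) you use the doubling constant to relate covering numbers at $r$ and $r'=R^{1+\Psi(R)}$, with the bound $\log_2(r/r')\leq\epsilon\log_2(R/r')$ carrying the same content as the paper's introduction of the intermediate scale $R^{1+(1-\epsilon)\Psi(R)}$; your explicit note about the trivial case when the exponent is nonpositive is a sound (and helpful) addition that the paper leaves implicit.
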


\begin{proof}
(i) Let $d=\overline{\dim }_{\Phi }E$ and pick $0<r\leq R^{1+\Psi (R)}<R\leq
c_{1}$. If $r\leq R^{1+\Phi (R)}$, then (\ref{upper}) follows by the
definition of $\overline{\dim }_{\Phi }E$. Otherwise, $R^{1+\Phi (R)}<r\leq
R^{1+\Psi (R)}$, hence $\Psi (R)<\Phi (R)$ and therefore $0<\Phi (R)-\Psi
(R)\leq \epsilon \Psi (R)$. Consequently, 
\begin{align*}
N_{r}(B(z,R)\cap E)& \leq N_{R^{1+\Phi (R)}}(B(z,R)\cap E)\leq C\left( \frac{%
R}{R^{1+\Phi (R)}}\right) ^{d+\epsilon } \\
& \leq C\left( \frac{R}{r}\right) ^{d+\epsilon }R^{-(\Phi (R)-\Psi
(R))(d+\epsilon )} \\
& \leq C\left( \frac{R}{r}\right) ^{d+\epsilon }R^{-\epsilon \Psi
(R)(d+\epsilon )}\leq C\left( \frac{R}{r}\right) ^{(d+\epsilon )(1+\epsilon
)}.
\end{align*}

(ii) Now let $d=\underline{\dim }_{\Psi }E$ and pick $0<r\leq R^{1+\Phi
(R)}<R\leq c_{1}$. Again, if $r\leq R^{1+\Psi (R)}$, then (\ref{lower})
follows by the definition of $\underline{\dim }_{\Psi }E$. Otherwise, $%
R^{1+\Psi (R)}<r\leq R^{1+\Phi (R)}$, hence $\Phi (R)<\Psi (R)$ so $\Psi
(R)(1-\epsilon )\leq \Phi (R)$. Then, 
\begin{equation*}
N_{r}(B(z,R)\cap E)\geq N_{R^{1+\Phi (R)}}(B(z,R)\cap E)\geq
N_{R^{1+(1-\epsilon )\Psi (R)}}(B(z,R)\cap E).
\end{equation*}%
But $R^{1+(1-\epsilon )\Psi (R)}=R^{1+\Psi (R)}2^{T}$ with $T=\epsilon \Psi
(R)\log _{2}R^{-1}$, and since the metric space is doubling with doubling
constant $M$, iterating this definition we have that each ball of radius $%
R^{1+(1-\epsilon )\Psi (R)}$ can be covered by at most $M^{\lceil T\rceil }$
balls of radius $R^{1+\Psi (R)}$. Therefore, 
\begin{align*}
N_{R^{1+(1-\epsilon )\Psi (R)}}(B(z,R)\cap E)& \geq M^{-\lceil T\rceil
}N_{R^{1+\Psi (R)}}(B(z,R)\cap E) \\
& \geq cM^{-\lceil T\rceil }\left( \frac{R}{R^{1+\Psi (R)}}\right)
^{d-\epsilon }=cR^{\epsilon \Psi (R)\log _{2}M}R^{-\Psi (R)(d-\epsilon )} \\
& =cR^{-\Psi (R)(d-\epsilon (1+\log _{2}M))}\geq c\left( \frac{R}{r}\right)
^{d-\epsilon (1+\log _{2}M)}.
\end{align*}%
Here the last inequality holds since $R^{1+\Psi (R)}<r$.
\end{proof}

\begin{proof}[Proof of Proposition \protect\ref{Propo-equal}]
(i) The assumption $\Phi _{1}(x)/\Phi _{2}(x)\rightarrow 1$ as $x\rightarrow
0$ ensures that for each $\epsilon >0$ there is some $c_{0}=c_{0}(\epsilon )$
such that 
\begin{equation*}
\left\vert \frac{\Phi _{1}(R)}{\Phi _{2}(R)}-1\right\vert \leq \epsilon 
\text{ for }R\leq c_{0}.
\end{equation*}%
Thus (\ref{upper}) holds for all $R\leq \min (c_{0},c_{1})$ (where $c_{1}$
was introduced in Lemma \ref{lemma:cont}) and that implies $\overline{\dim }%
_{\Phi _{2}}E\leq (\overline{\dim }_{\Phi _{1}}E+\epsilon )(1+\epsilon )$
for any $\epsilon >0.$ Hence $\overline{\dim }_{\Phi _{2}}E\leq \overline{%
\dim }_{\Phi _{1}}E$. Since the roles of $\Phi _{1}$ and $\Phi _{2}$ can be
interchanged because the condition is symmetric, the opposite inequality
also holds.

The statement for the lower dimensions follows in the same way. Observe that
by symmetry there is no need to consider separately the hypothetical case
when one of the dimensions is zero.

(ii) Given $\epsilon >0$ choose $\delta >0$ such that 
\begin{equation*}
\max (|g(t_{0})/g(t)-1|,|g(t)/g(t_{0})-1|)\leq \epsilon
\end{equation*}%
for any $|t-t_{0}|<\delta $. For each such $t$, apply Lemma \ref{lemma:cont}
to $\Phi _{t}$ and $\Phi _{t_{0}}$.
\end{proof}

In Proposition \ref{ContFailure} we will see that the convergence of (ii)
need not hold if $g(t_{0})=0$.

\begin{remark}
In order to apply Lemma \ref{lemma:cont} to prove the `continuity' property
of Prop. \ref{Propo-equal}(ii) , it was necessary that the convergence of $%
\Phi _{t}(x)/\Phi _{t_{0}}(x)$ was uniform in $x$. For instance, Lemma \ref%
{lemma:cont} cannot be applied to families such as $\Phi _{t}(x)=|\log
x|^{-t}$ with $t\in (0,1]$. We do not know if there is a one-parameter
family of dimension functions which range continuously from the
quasi-Assouad to the Assouad dimensions.
\end{remark}

The quasi-Assouad dimensions can also be understood as special cases of $%
\Phi $-dimensions, but the functions $\Phi $ need to be tailored for the
specific set.

\begin{proposition}
\label{qA}For any $E$ $\subseteq X,$ there are dimension functions $\Phi
_{1},$ $\Phi _{2}$ (depending on $E),$ which tend to $0$ and satisfying $%
\dim _{qA}E=\overline{\dim }_{\Phi _{1}}E$ and $\dim _{qL}E$ $=\underline{%
\dim }_{\Phi _{2}}E.$
\end{proposition}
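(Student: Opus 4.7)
The plan is to use the identity $\dim_{qA} E = \lim_{\theta \to 1^{-}} \overline{\dim}_{\Phi_{\theta}} E$ to extract a sequence of quantitative covering estimates from the $\theta$-Assouad spectrum and then amalgamate them into a single dimension function $\Phi_1 \to 0$. Let $d = \dim_{qA} E$. Choose a strictly increasing sequence $\theta_n \nearrow 1$ with $\overline{\dim}_{\Phi_{\theta_n}} E \leq d + 1/n$. Unwinding the definition of the upper $\Phi_{\theta_n}$-dimension, there exist constants $C_n > 0$ and thresholds $\rho_n \in (0,1)$, which we may assume satisfy $\rho_n \searrow 0$, such that for all $z \in E$ and $0 < r \leq R^{1/\theta_n} \leq R \leq \rho_n$,
\[
N_r(B(z,R) \cap E) \leq C_n \left(\frac{R}{r}\right)^{d + 2/n}.
\]
The guiding idea is to design $\Phi_1$ so that, for $R \in (\rho_{n+1}, \rho_n]$, one has $\Phi_1(R) \geq 1/\theta_n - 1$; this forces $r \leq R^{1+\Phi_1(R)}$ to imply $r \leq R^{1/\theta_n}$, allowing us to invoke the $n$th estimate. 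Since $1/\theta_n - 1 \to 0$, such a $\Phi_1$ can simultaneously be made to satisfy $\Phi_1(R) \to 0$.

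The main obstacle is the monotonicity condition in the definition of a dimension function, namely that $R \mapsto R^{1+\Phi_1(R)}$ be decreasing. The naive choice $\Phi_1(R) = 1/\theta_n - 1$ on $(\rho_{n+1}, \rho_n]$ fails: at each breakpoint $\rho_{n+1}$ the exponent $1 + \Phi_1(R)$ drops (since $\theta_n$ is increasing in $n$), and as $\rho_{n+1} < 1$ this causes $R^{1+\Phi_1(R)}$ to jump \emph{upward}. I would circumvent this by prescribing the auxiliary function $F(R) := R^{1+\Phi_1(R)}$ directly. First, shrink the $\rho_n$ (if necessary) so that $\rho_{n+1} \leq \rho_n^{\theta_n/\theta_{n-1}}$. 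Then set $F(\rho_n) := \rho_n^{1/\theta_{n-1}}$, which automatically satisfies $F(\rho_n) \leq \rho_n^{1/\theta_n}$, and on each interval $(\rho_{n+1}, \rho_n]$ interpolate $\log F$ linearly in $\log R$ between the prescribed boundary values. The inequality $\rho_{n+1} \leq \rho_n^{\theta_n/\theta_{n-1}}$ guarantees that $F$ is strictly decreasing as $R$ decreases, and a short calculation in log-log coordinates shows that the interpolant stays below $R^{1/\theta_n}$ on the whole interval. Finally, defining $\Phi_1(R) := (\log F(R))/(\log R) - 1$ yields a continuous dimension function with $\Phi_1(R) \to 0$.

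Given this $\Phi_1$, fix $\varepsilon > 0$ and choose $n$ with $2/n < \varepsilon$; for any $R \leq \rho_n$ and $r \leq R^{1+\Phi_1(R)}$ we have $r \leq R^{1/\theta_n}$, so the displayed estimate yields $N_r(B(z,R) \cap E) \leq C_n (R/r)^{d + \varepsilon}$, whence $\overline{\dim}_{\Phi_1} E \leq d$. Conversely, $\Phi_1 \to 0$ implies $\Phi_1 \leq \Phi_\theta$ for all sufficiently small $R$ and every fixed $\theta \in (0,1)$, so $\overline{\dim}_{\Phi_1} E \geq \overline{\dim}_{\Phi_\theta} E$ by the stated monotonicity of the $\Phi$-dimensions; letting $\theta \to 1^{-}$ gives $\overline{\dim}_{\Phi_1} E \geq \dim_{qA} E = d$. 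The case $\underline{\dim}_{\Phi_2} E = \dim_{qL} E$ is entirely analogous: because $\underline{\dim}_{\Phi_\theta} E$ is non-increasing in $\theta$ with $\lim_{\theta \to 1^{-}} \underline{\dim}_{\Phi_\theta} E = \dim_{qL} E =: d_L$, for each $n$ one extracts constants $c_n > 0$ and $\rho_n \searrow 0$ such that $N_r(B(z,R) \cap E) \geq c_n (R/r)^{d_L - 2/n}$ whenever $r \leq R^{1/\theta_n} \leq R \leq \rho_n$, and the same patching construction produces the required $\Phi_2$.
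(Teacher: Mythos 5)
Your overall strategy matches the paper's: extract a sequence of covering estimates from the $\theta$-Assouad spectrum at scales $\theta_n \nearrow 1$ and patch them into a single dimension function $\Phi_1 \to 0$. Your construction via log-linear interpolation of the auxiliary function $F(R) = R^{1+\Phi_1(R)}$ is a clean alternative to the paper's construction (which takes $\Phi_1$ locally constant on part of each interval and $F$ constant on the remaining transition piece), and your monotonicity, continuity and decay-to-zero checks for $\Phi_1$ are correct.

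However, there is a gap in the final verification. You claim that for any $R \leq \rho_n$ and $r \leq R^{1+\Phi_1(R)}$ one has $r \leq R^{1/\theta_n}$. This fails when $R \in (\rho_{m+1},\rho_m]$ for some $m>n$: on that interval your interpolant gives $\Phi_1(R) \in [1/\theta_m-1,\ 1/\theta_{m-1}-1]$, which is strictly below $1/\theta_n-1$ in general, so $r \leq R^{1+\Phi_1(R)}$ only yields $r \leq R^{1/\theta_m}$, and $R^{1/\theta_m} > R^{1/\theta_n}$. The repair is to invoke the $m$-th covering estimate rather than the $n$-th; the exponent $d+2/m$ is then still $\leq d+\varepsilon$, but the constant becomes $C_m$, which depends on $m$ and hence on $R$, and $\sup_{m\geq n} C_m$ need not be finite. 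Since the definition of $\overline{\dim}_{\Phi_1}E$ requires a single constant $c_2$ valid for all small $R$, this breaks the argument as written.

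The fix, which is what the paper does implicitly by starting from a constant-free estimate ``by definition,'' is to absorb each $C_m$ into the exponent before patching. Since $r \leq R^{1/\theta_m}$ forces $R/r \geq R^{-(1/\theta_m-1)} \to \infty$ as $R\to 0$, one may further shrink $\rho_m$ so that $(R/r)^{1/m} \geq C_m$ for all $R \leq \rho_m$ in the relevant range, giving $N_r(B(z,R)\cap E) \leq (R/r)^{d+3/m}$ with no multiplicative constant. With this modification your patching argument goes through with $c_2=1$ (choosing $n$ so that $3/n<\varepsilon$), and the remainder of your proof --- the converse inequality from $\Phi_1 \to 0$ via monotonicity, and the analogous construction of $\Phi_2$ for $\dim_{qL}E$ --- is correct and matches the paper's reasoning.
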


\begin{proof}
Since our choice of dimension functions will satisfy $\Phi _{i}\rightarrow
0, $ it will automatically be true that $\overline{\dim }_{\Phi _{1}}E\geq
\dim _{qA}E$ and $\underline{\dim }_{\Phi _{2}}E$ $\leq \dim _{qL}E$. Thus
we need only check the opposite inequalities.

First, consider the quasi-Assouad dimension. Put $d=\dim _{qA}E$. By
definition, for each $n\in \mathbb{N}$, there are $\delta _{n}\downarrow 0$
and $\rho _{n}\downarrow 0$ such that for all $r\leq R^{1+\delta _{n}}\leq
R\leq \rho _{n}$ and $z\in E,$ we have%
\begin{equation*}
N_{r}(B(z,R)\cap E)\leq \left( \frac{R}{r}\right) ^{d+1/n}.
\end{equation*}

Put $n_{1}=1$ and inductively define a subsequence $\{n_{j}\}$ so that $%
p_{j}^{1+\varepsilon _{j-1}}=p_{j+1}^{1+\varepsilon _{j}}$ where, for
notational convenience, we put $p_{j}=\rho _{n_{j}}$ and $\varepsilon
_{j}=\delta _{n_{j}}$. Since $\varepsilon _{j-1}>\varepsilon _{j}$, we also
have $p_{j+1}^{1+\varepsilon _{j-1}}<p_{j+1}^{1+\varepsilon _{j}},$ hence
there is some $q_{j}\in (p_{j+1},p_{j}]$ with $q_{j}^{1+\varepsilon
_{j-1}}=p_{j+1}^{1+\varepsilon _{j}}$.

We are now ready to define $\Phi _{1}$. For $R\in (q_{j},p_{j}]$ we put $%
\Phi _{1}(R)=\varepsilon _{j-1},$ while for $R\in (p_{j+1},q_{j}]$ we define 
$\Phi _{1}(R)$ by the rule that $R^{1+\Phi _{1}(R)}=p_{j+1}^{1+\varepsilon
_{j}}$. Observe that in either case, $\Phi _{1}(R)\geq \varepsilon _{j}$. It
is straight forward to verify that the function $R^{1+\Phi (R)}$ decreases
to $0$ as $R$ decreases to $0,$ and therefore $\Phi _{1}$ is a dimension
function.

If $R\in (p_{j+1},q_{j}],$ then $R^{1+\varepsilon _{j-1}}\leq
q_{j}^{1+\varepsilon _{j-1}}=R^{1+\Phi _{1}(R)}$ and thus $\Phi _{1}(R)\leq
\varepsilon _{j-1}$. This shows that $\Phi _{1}$ tends to $0$.

We will check that $\overline{\dim }_{\Phi _{1}}E\leq d+1/n_{N}$ for any
given $N$. To do this, choose any $R\leq p_{N}$, $r\leq R^{1+\Phi _{1}(R)}$
and $z\in E$. Find $k\geq N$ so $R\in (p_{k+1},p_{k}]$ so that $\Phi
_{1}(R)\geq \varepsilon _{k}$. Thus $r\leq R^{1+\varepsilon
_{k}}=R^{1+\delta _{n_{k}}}$ and consequently, since $n_{k}\leq n_{N}$, 
\begin{equation*}
N_{r}(B(z,R)\cap E)\leq \left( \frac{R}{r}\right) ^{d+1/n_{k}}\leq \left( 
\frac{R}{r}\right) ^{d+1/n_{N}},
\end{equation*}%
completing the verification.

The argument for the quasi-lower Assouad dimension is the same, building $%
\Phi _{2}$ using the fact that for each $n\in \mathbb{N}$, there are $\delta
_{n}\downarrow 0$ and $\rho _{n}\downarrow 0$ such that for all $r\leq
R^{1+\delta _{n}}\leq R\leq \rho _{n}$ and $z\in E,$ we have%
\begin{equation*}
N_{r}(B(z,R)\cap E)\geq \left( \frac{R}{r}\right) ^{d+1/n},
\end{equation*}%
and the proposition follows.
\end{proof}



Although the lower $\theta $-Assouad spectrum is bounded above by the lower
box dimension, it is not always bounded above by the Hausdorff dimension. In
fact, $\sup_{\theta }\underline{\dim }_{L}^{\theta }E$ is not even bounded
above by a uniform multiple of the Hausdorff dimension. This is a
consequence of \cite[Theorem 3.3]{FYInd} and \cite{CWC}; see particularly
the comments in \cite{FYInd} following the statement of the theorem.

However, we have the following relationship between the lower $\theta $%
-Assouad spectrum and the Hausdorff dimension that does not seem to have
been previously observed. This follows easily from \cite{GH} where it was
shown that $\dim _{qL}E\leq \dim _{H}E$ for closed subsets $E\subseteq 
\mathbb{R}^{d},$ but the same proof is valid for closed subsets in a
doubling metric space.

\begin{proposition}
\label{Propo-qLHau} If $E$ is a closed subset of $X$, then $\underline{\dim }%
_{L}^{\theta }E\leq \frac{1}{\theta }\dim _{H}E$ for any $\theta \in (0,1)$.
\end{proposition}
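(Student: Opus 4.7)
The plan is to adapt the Cantor-subset construction that in \cite{GH} establishes $\dim_{qL} E \leq \dim_H E$. I fix $\alpha < \underline{\dim}_L^{\theta} E$ and produce a closed subset $F \subseteq E$ with $\dim_H F \geq \theta\alpha$; letting $\alpha$ tend upward to $\underline{\dim}_L^{\theta} E$ then yields the desired inequality. By the definition of the lower spectrum, after slightly decreasing $\alpha$ and shrinking $c_1$ so as to absorb the multiplicative constant of the lower bound (and the doubling-constant factor that arises when passing from covers to packings), I may assume
\begin{equation*}
P_r(B(z,R) \cap E) \geq (R/r)^{\alpha}
\end{equation*}
for every $z \in E$ and every $0 < r \leq R^{1/\theta} \leq R \leq c_1$.

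Next, fix $R_0 \leq c_1$ and set $R_{k+1} = R_k^{1/\theta}$, so $R_k = R_0^{(1/\theta)^k}$. Starting from a single ball $B(z_0, R_0)$ with $z_0 \in E$, the displayed inequality with $R = R_k$ and $r = R_{k+1}$ produces, inside each level-$k$ ball, at least $M_k := (R_k/R_{k+1})^{\alpha} = R_k^{-\alpha(1/\theta-1)}$ pairwise disjoint balls of radius $R_{k+1}$ with centres in $E$; I declare these to be the level-$(k+1)$ balls. Let $F$ denote the intersection over $k$ of the unions of level-$k$ balls. Since $E$ is closed and nested closed balls of shrinking radii have non-empty intersection in a complete doubling space, $F$ is a non-empty closed subset of $E$. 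Define $\mu$ by giving equal mass $1/N_k$ to each of the $N_k = \prod_{j<k} M_j$ level-$k$ balls; a direct computation with the super-exponential scales $R_j = R_0^{(1/\theta)^j}$ gives $N_k \geq (R_0/R_k)^{\alpha}$, so each level-$k$ ball carries $\mu$-mass at most $(R_k/R_0)^{\alpha}$.

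The crux is the estimate of $\mu(B(x,s))$ at intermediate scales. Given small $s > 0$, choose $k$ with $R_{k+1} \leq s < R_k$. Because the level-$k$ balls are pairwise disjoint (centres at distance at least $2R_k$) and $s < R_k$, the doubling property of $X$ forces $B(x,s)$ to meet only $O(1)$ such balls, whence
\begin{equation*}
\mu(B(x,s)) \leq C(R_k/R_0)^{\alpha} \leq C R_0^{-\alpha}\, s^{\theta\alpha},
\end{equation*}
the last step using $R_k \leq s^{\theta}$, which is exactly the restatement of $s \geq R_{k+1} = R_k^{1/\theta}$. The mass distribution principle then yields $\dim_H F \geq \theta\alpha$, and hence $\dim_H E \geq \theta\alpha$. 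I expect the main technical obstacle to be the careful verification of this mass bound uniformly across $s \in [R_{k+1}, R_k]$; the super-exponential spacing $R_{k+1} = R_k^{1/\theta}$ is exactly what converts the spectrum exponent $\alpha$ into the Hausdorff exponent $\theta\alpha$, and is the feature that distinguishes the $\theta$-spectrum lower bound from the weaker quasi-lower Assouad bound used in \cite{GH}.
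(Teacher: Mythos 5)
Your argument is correct and is essentially the approach the paper takes: the paper, too, reduces matters to the packing-number lower bound $P_r(B(z,R)\cap E)\gtrsim (R/r)^{\alpha}$ for $r\leq R^{1/\theta}$ and then applies the mass distribution principle to a Frostman measure of exponent $\theta\alpha$ supported on $E$; the only difference is that the paper obtains that measure by citing \cite[Proposition 10]{GH}, whereas you reprove its existence from scratch via the explicit Moran construction with the super-exponential scales $R_{k+1}=R_k^{1/\theta}$.
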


\begin{proof}
Fix $\theta \in (0,1)$ and recall that $\underline{\dim }_{\Phi _{\theta }}E=%
\underline{\dim }_{L}^{\theta }E$ for $\Phi _{\theta }=1/\theta -1$. If $%
\underline{\dim }_{\Phi _{\theta }}E=0$ there is nothing to prove, so assume 
$\alpha <\underline{\dim }_{\Phi _{\theta }}E$ for some $\alpha >0$.

The doubling property implies that the covering numbers, $N_{r},$ can be
replaced by the packing numbers, $P_{r},$ in the definition of the $\Phi $%
-dimensions. Thus we can pick $\rho =\rho (\theta ,\alpha )>0$ such that for
any $z\in E$ and any $r\leq 2R^{1/\theta }\leq R\leq \rho ,$ 
\begin{equation*}
P_{r}(B(z,R)\cap E)\geq 2^{\alpha }(R/r)^{\alpha }.
\end{equation*}%
In particular, $P_{2R^{1/\theta }}(B(z,R)\cap E)\geq R^{(1-1/\theta )\alpha
} $.

In \cite[Proposition 10]{GH}, it is shown that under this assumption, there
is a probability measure $\mu ,$ supported on $E,$ and a constant $A$ such
that 
\begin{equation*}
\mu (U)\leq A(\text{diam}(U))^{\alpha \theta }
\end{equation*}%
for all Borel sets $U$. But then the mass distribution principle (see \cite[%
Proposition 2.1]{Fal}) implies $\theta \alpha \leq \dim _{H}E$. As this is
true for all $\alpha <\underline{\dim }_{\Phi _{\theta }}E,$ it must be that 
$\theta \underline{\dim }_{\Phi _{\theta }}E\leq \dim _{H}E$.
\end{proof}

\begin{corollary}
If $E$ is a closed set and $\Phi (x)\rightarrow \delta >0$ as $x\rightarrow
0,$ then $\underline{\dim }_{\Phi }E\leq (\delta +1)\dim _{H}E$.
\end{corollary}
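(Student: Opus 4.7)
The plan is to combine the two results that appear immediately above the corollary in the excerpt. By Corollary~\ref{Coro:p}, the hypothesis $\Phi(x)\to\delta$ with $0<\delta<\infty$ gives
\[
\underline{\dim}_{\Phi}E \;=\; \underline{\dim}_{L}^{\theta}E
\qquad\text{for }\theta=(1+\delta)^{-1}\in(0,1).
\]
Then apply Proposition~\ref{Propo-qLHau} (which requires $E$ to be closed, matching our hypothesis) to this value of $\theta$ to get
\[
\underline{\dim}_{L}^{\theta}E \;\leq\; \frac{1}{\theta}\dim_{H}E \;=\; (1+\delta)\dim_{H}E.
\]
Chaining these two (in)equalities yields the claimed bound.

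There is essentially no obstacle: the argument is a one-line composition of the two cited results, and the choice of $\theta$ is forced by the conversion formula $\theta=(1+\delta)^{-1}$. The only thing worth flagging is that the statement implicitly assumes $\delta<\infty$ (so that the corresponding $\theta$ lies in $(0,1)$ and Proposition~\ref{Propo-qLHau} is applicable); if $\delta=\infty$ were allowed, one would fall into the box-dimension regime of Proposition~\ref{Prop:box}, and the bound $(\delta+1)\dim_H E$ would be vacuous anyway.
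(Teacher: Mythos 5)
Your proof is correct and is exactly the intended argument: the corollary follows by chaining Corollary~\ref{Coro:p} with Proposition~\ref{Propo-qLHau} via $\theta=(1+\delta)^{-1}$. Your remark about the $\delta=\infty$ edge case is a reasonable observation but, as you note, that case is vacuous.
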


\subsection{Dimensions of decreasing sequences\label{DecSeq}}

In \cite{GH} and \cite{GHM} it was shown that if $E=\{x_{n}\}\subseteq 
\mathbb{R}^{+}$ is a decreasing sequence with the sequence of `gaps', $%
\{x_{n}-x_{n+1}\},$ also decreasing, then both the upper Assouad and upper quasi-Assouad
dimensions of $E$ are either $0$ or $1$. The upper Assouad dimension of such a set
is $0$ if and only if the sequence of gaps is lacunary. Likewise, the
upper quasi-Assouad dimension is $0$ if and only if $\overline{\dim }_{B}E=0$.

This dichotomy fails for the upper $\Phi $-dimensions. Indeed, if we choose $\Phi
(x)=\delta >0$ for all $x$, it follows from \cite{FTale} and \cite[Theorem
6.2]{FYAdv}, that 
\begin{equation*}
\overline{\dim }_{\Phi }E=\min \{(1+\delta ^{-1})\overline{\dim }_{B}E,1\}.
\end{equation*}%
Therefore, $\overline{\dim }_{\Phi }E\in (0,1)$ if $0<\overline{\dim }%
_{B}E<\delta /(1+\delta )$. However, in this case the upper $\Phi $-dimension is
bounded above by the upper quasi-Assouad dimension, and necessarily $\dim
_{qA}E=\dim _{A}E=1$.

More interestingly, the dichotomy fails also for upper $\Phi $-dimensions that lie
between the upper quasi-Assouad and upper Assouad dimensions. Indeed, we have the
following.

\medskip

\begin{example}
\label{Ex:Dec} There is a decreasing set $E$, with decreasing gaps, and a
dimension function $\Phi \rightarrow 0,$ with $\dim _{qA}E=0$ and $\dim
_{A}E=1,$ but with $0<\overline{\dim }_{\Phi }E<1$.
\end{example}

\begin{proof}[Construction]
Let $E=\{x_{n}\}_{n=1}^{\infty }$ where $x_{n}=n^{-\log n}$. Define $\Phi $
by the rule $x_{n}^{1+\Phi (x_{n})}=2(4n)^{-(1+\log 4n)}\log (4n)$ and
extend $\Phi $ to $\mathbb{R}$ by setting $\Phi (x)=\Phi (x_{n})$ if $x\in
(x_{n+1},x_{n})$. We will verify that $E$ and $\Phi $ have the stated
properties. Of course, if $\Phi (x)\rightarrow 0$ as $x\rightarrow 0,$ then
we must have $\dim _{qA}E\leq \overline{\dim }_{\Phi }E\leq $ $\dim _{A}E$
and thus the properties $\dim _{qA}E=0$ and $\dim _{A}E=1$ will follow once
we have shown that $\Phi $ tends to $0$ and $0<\overline{\dim }_{\Phi }E<1$.

The fact that $E$ is a decreasing set follows from the fact that the
function $f(z)=z^{-\log z}$ has negative derivative. Similarly, $x^{1+\Phi
(x)}$ can be seen to be decreasing by checking the function $%
g(z)=z^{-(1+\log z)}\log z$ has negative derivative for large $z$. Thus $%
\Phi $ is a dimension function. One can directly calculate $\Phi $ and see
that $\Phi (x_{n})\sim 1/\log n$. That shows $\Phi (x)\rightarrow 0$ as $%
x\rightarrow 0$.

From the derivative of the function $h(x)=x^{-\log x}-(x+1)^{-\log (x+1)}$
one can also confirm that the sequence $\{x_{n}-x_{n+1}\}$ is decreasing. An
application of the mean value theorem shows that $x_{n}-x_{n+1}$ $%
=-f^{\prime }(\xi _{n})$ for some $\xi _{n}\in \lbrack n,n+1]$ and thus 
\begin{equation*}
2(n+1)^{-\log (n+1)}\log (n+1)/(n+1)\leq x_{n}-x_{n+1}\leq 2n^{-\log n}\log
n/n.
\end{equation*}%
This shows that if we take $R=x_{k}$ and put 
\begin{equation*}
r=R^{1+\Phi (R)}=2(4k)^{-(1+\log 4k)}\log (4k),
\end{equation*}%
then 
\begin{equation*}
x_{4k}-x_{4k+1}\leq r\leq x_{4k-1}-x_{4k}.
\end{equation*}%
Because the gaps are decreasing in length, $x_{i}-x_{i+1}\geq r$ whenever $%
i=k,\ldots,4k-1,$ and $x_{i}=x_{i+1}\leq r$ whenever $i\geq 4k$.
Consequently, 
\begin{equation*}
N_{r/2}\left( B(0,R)\cap E\right) =3k+\frac{x_{4k}}{r}=3k+\frac{(4k)^{-\log
4k}k}{2(4k)^{-\log 4k}\log 4k}
\end{equation*}%
and hence for large enough $k$, 
\begin{equation*}
3k\leq N_{r/2}\left( B(0,R)\cap E\right) \leq 4k
\end{equation*}%
Since%
\begin{equation*}
\frac{R}{r}=\frac{k^{1+2\log 4}}{2\log 4k}4^{1+\log 4},
\end{equation*}%
we deduce that $\overline{\dim }_{\Phi }E\geq 1/(1+2\log 4)$.

A similar statement holds for any $r$ with $x_{4k}-x_{4k+1}\leq r\leq
x_{4k-1}-x_{4k}$. More generally, there are constants $c_{1},c_{2},c_{3}>0$
such that if%
\begin{equation*}
x_{4n}-x_{4n+1}\leq r\leq x_{4n-1}-x_{4n}
\end{equation*}%
where $n=Lk+j$ with $0\leq j<k$ and $L\geq 4,$ then 
\begin{equation*}
N_{r/2}\left( B(0,R)\cap E\right) =n-k+\frac{x_{n}}{r}\leq c_{1}(n-k+\frac{n%
}{\log n})\leq c_{2}n,
\end{equation*}%
and 
\begin{equation*}
\frac{R}{r}\geq c_{3}\frac{n^{1+\log n}}{k^{\log k}\log n}.
\end{equation*}%
Thus, for $t=(1+\log 3)^{-1}$, we get 
\begin{equation*}
\left( \frac{R}{r}\right) ^{t}\geq n\left( c_{3}\frac{n^{(\log n-\log 3)}}{%
k^{\log k}\log n}\right) ^{t}\geq n\left( c_{3}\frac{L^{\log Lk-\log
3}k^{\log L-\log 3}}{\log ((L+1)k)}\right) ^{t}
\end{equation*}%
and the last quotient is bounded away from $0$.

If $R\in (x_{k+1},x_{k}),$ then since $\Phi (R)=\Phi (x_{k})$ we make a
similar argument. Finally, we note that if $z>0,$ then the decreasingness of
the gaps means 
\begin{equation*}
N_{r/2}\left( B(z,R)\cap E\right) \leq N_{r/2}(B(0,R)).
\end{equation*}%
Thus $0<1/(1+2\log 4)\leq \overline{\dim }_{\Phi }E\leq 1/(1+\log 3)<1$.
\end{proof}

\begin{remark}
It is an open problem to characterize the dimension functions $\Phi $ for
which the $0,1$ dichotomy holds.
\end{remark}

\section{Examples of $\Phi $-dimensions\label{Examples}}

In this section we will construct various examples. These will show the
sharpness of some of the basic properties, such as Proposition \ref{DimEqual}%
, as well as illustrating their distinctness. In particular, we will give an
example of a set with specified values for a countable family of $\Phi $%
-dimensions and whose set of all dimensions between quasi-Assouad and
Assouad is an interval.

In all these examples, the set $E$ will be a Cantor set, by which we mean a
perfect subset of $[0,1]$ of Lebesgue measure zero, that has a construction
as outlined below. We begin this section by determining a formula for the $%
\Phi $-dimension of Cantor sets. It will be convenient to make use of the
following notation.

\begin{notation}
\label{sim}We write $f\sim g$ if there are positive constants $c_{1},c_{2}$
such that $c_{1}f(x)\leq g(x)\leq c_{2}f(x)$ for all $x$. The symbols $%
\gtrsim $ and $\lesssim $ are defined similarly.
\end{notation}

\subsection{$\Phi $-dimensions of Cantor sets}

Given a decreasing, summable sequence, $a=\{a_{j}\}$ with $\sum_{j}a_{j}=1,$
by the \textbf{Cantor set associated with }$a$, denoted by $C_{a},$ we mean
the compact subset of $[0,1]$ constructed as follows: In the first step, we
remove from $[0,1]$ an open interval of length $a_{1}$, resulting in two
closed intervals $I_{1}^{1}$ and $I_{2}^{1}$. Having constructed the $k$-th
step, we obtain the closed intervals $I_{1}^{k},\ldots ,I_{2^{k}}^{k}$
contained in $[0,1]$. The intervals $I_{j}^{k},$ $j=1, \ldots,2^{k},$ are
called the Cantor intervals of step $k$. The next step consists in removing
from each $I_{j}^{k}$ an open interval of length $a_{2^{k}+j-1}$, obtaining
the closed intervals $I_{2j-1}^{k+1}$ and $I_{2j}^{k+1}$. We define 
\begin{equation*}
C_{a}:=\bigcap_{k\geq 1}\bigcup_{j=1}^{2^{k}}I_{j}^{k}.
\end{equation*}%
This construction uniquely determines the set because the lengths of the
removed intervals on each side of a given gap are known. The classical
middle-third Cantor set is the Cantor set associated with the sequence $%
\{a_{i}\}$ where $a_{i}=3^{-n}$ if $2^{n-1}\leq i\leq 2^{n}-1$. All
associated Cantor sets are uncountable, compact, totally disconnected and,
in fact, are all homeomorphic.

If we put 
\begin{equation*}
s_{n}=2^{-n}\sum_{j\geq 2^{n}}a_{j},
\end{equation*}%
then $s_{n}$ is the average length of the Cantor intervals of step $n$. The
decreasing property of the sequence $\{a_{j}\}$ ensures that all the
intervals of step $n$ have lengths satisfying

\begin{equation*}
s_{n+1}\leq \text{length}(I_{j}^{n})\leq s_{n-1}
\end{equation*}%
and that $s_{n}\geq a_{2^{n+1}}$. Of course, always $s_{n+1}\leq s_{n}/2$.

When the gap sizes $a_{2^{n}}=\cdot \cdot \cdot =a_{2^{n+1}-1}$ for all $n$,
the intervals at step $n$ all have the same length (namely $s_{n}$), and the
Cantor set is sometimes called a central Cantor set. The classical
middle-third Cantor set is such an example. In this case, the ratio $%
s_{j+1}/s_{j}$ is referred to as the ratio of dissection at step (or level) $%
j$.

We will assume the sequence $\{a_{j}\}$ is \textbf{doubling}, meaning there
is a constant $\kappa $ such that $a_{n}\leq \kappa a_{2n}$ for all $n$.
This ensures that%
\begin{equation*}
\tau =\inf s_{n+1}/s_{n}>0
\end{equation*}%
since%
\begin{equation*}
s_{n}\leq \frac{1}{2^{n}}\left( \sum_{j=2^{n+1}}^{\infty
}a_{j}+a_{2^{n}}2^{n}\right) \leq 2s_{n+1}+\kappa ^{2}a_{2^{n+2}}\leq
(2+\kappa ^{2})s_{n+1}.
\end{equation*}%
Thus under the doubling assumption we have $s_{j}\sim s_{j+1}$. It is easily
seen that such a Cantor set is uniformly perfect, where we recall that a set 
$E$ is called uniformly perfect if there is a constant $c>0$ so that for
every $z\in E$ and $r>0$ we have $B(z,r)\diagdown B(z,cr)\neq \emptyset $
whenever $E\diagdown B(z,r)\neq \emptyset $. A set $E$ is uniformly perfect
if and only if $\underline{\dim }_{L}E>0$ \cite{KLV}, and consequently, $%
\underline{\dim }_{\Phi }C_{a}>0$ whenever $a$ is a doubling sequence.

For Cantor sets, it is helpful to understand the comparison $r\leq R^{1+\Phi
(R)}$ in terms of the sequence $\{s_{n}\}$. For this we introduce the
following notation.

\begin{notation}
Given $\Phi \in \mathcal{D}$ and a doubling, decreasing, summable sequence $%
a=\{a_{j}\},$ define the associated \textbf{depth function} $\phi :\mathbb{%
N\rightarrow N}$ by the rule that $\phi (n)$ is the minimal integer $j$ such
that $s_{n+j}\leq s_{n}^{1+\Phi (s_{n})}$.
\end{notation}

In other words, $\phi (n)$ is the minimal integer with $s_{n+\phi
(n)}/s_{n}\leq s_{n}^{\Phi (s_{n})}$. We remark that $\phi $ depends on both 
$\Phi $ and the underlying Cantor set (or, equivalently, the sequence $%
\{a_{j}\}$). We will frequently refer to $\Phi /\phi $ as a \textbf{%
dimension/depth function pair} associated with the Cantor set.

If $\phi $ is bounded, then the sequence $\{s_{n}^{\Phi (s_{n})}\}$ is
bounded away from $0$. The decreasingness of the function $R^{1+\Phi (R)}$
implies that if $s_{n}\leq R\leq s_{n-1}$, then 
\begin{equation*}
\tau s_{n}^{\Phi (s_{n})}\leq R^{\Phi (R)}\leq \frac{1}{\tau }s_{n-1}^{\Phi
(s_{n-1})}.
\end{equation*}%
Hence if $\phi $ is bounded, then Proposition \ref{ADim} implies the upper
(or lower) $\Phi $-dimension coincides with the upper (resp., lower) Assouad
dimension.

A very useful observation for constructing examples is to note that if $E$
is \textit{any} Cantor set with $\tau =$ $\inf s_{j+1}/s_{j}$ and $\rho
=\sup s_{j+1}/s_{j}\leq 1/2$, and $\Phi $ is a dimension function with
associated depth function $\phi $ with respect to $E$, then we have%
\begin{equation}
\frac{(\phi (n)-1)}{n}\frac{\log \rho }{\log \tau }\leq \Phi (s_{n})\leq 
\frac{\phi (n)}{n}\frac{\log \tau }{\log \rho }.  \label{PreDepth}
\end{equation}%
This is because the doubling property ensures 
\begin{equation*}
\tau ^{\phi (n)}\leq \frac{s_{n+\phi (n)}}{s_{n}}\leq s_{n}^{\Phi
(s_{n})}\leq \rho^{n\Phi (s_{n})} \text{ \ and \ } \tau^{n\Phi (s_{n})}\le
s_{n}^{\Phi (s_{n})}\le \frac{s_{n+\phi (n)-1}}{s_{n}}\le \rho^{\phi (n)-1}.
\end{equation*}

If, in addition, $\phi (n)\geq 2$ (as is typically the case in interesting
examples), then we see that $\phi (n)$ is comparable to $n\Phi (s_{n})$ with
constants depending only on $\tau ,\rho $ because

\begin{equation}
\frac{\phi (n)}{n}\frac{\log \rho }{2\log \tau }\leq \Phi (s_{n})\leq \frac{%
\phi (n)}{n}\frac{\log \tau }{\log \rho }  \label{depth/dim}
\end{equation}

\begin{remark}
Notice that if we are given an increasing function $\phi :$ $\mathbb{%
N\rightarrow N}$, and a Cantor set $C_{a},$ we can define a function $\Phi $
by the rule $R^{1+\Phi (R)}=s_{n+\phi (n)}$ if $R\in (s_{n+1},s_{n}]$. If $%
R_{1}\leq R_{2}$ with $R_{1}\in (s_{n+1},s_{n}]$ and $R_{2}\in
(s_{k+1},s_{k}]$, then $n\geq k,$ so $\phi (n)\geq \phi (k)$ and hence $%
s_{n+\phi (n)}\leq s_{k+\phi (k)}$. Consequently, $R_{1}^{1+\Phi
(R_{1})}=s_{n+\phi (n)}\leq s_{k+\phi (k)}\leq R_{2}^{1+\Phi (R_{2})}$.
Furthermore, $R^{1+\Phi (R)}=s_{n+\phi (n)}\rightarrow 0$ as $n\rightarrow
\infty $ and hence as $R\rightarrow 0$. Thus $\Phi $ is a dimension function
with associated depth function $\phi $.
\end{remark}

\begin{corollary}
(i) If $\phi (n)/n\rightarrow \infty $, then $\overline{\dim }_{\Phi }C_{a}=%
\overline{\dim }_{B}C_{a}$ and $\underline{\dim}_{\Phi} C_a=\underline{\dim}%
_B C_a$.

(ii) The quasi-Assouad dimensions are obtained by taking $\phi (n)=\delta n$
and letting $\delta \rightarrow 0$.
\end{corollary}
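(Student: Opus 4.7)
The plan is to reduce both statements to results already established in the excerpt: for (i), to Proposition \ref{Prop:box}(ii), and for (ii), to the definition $\dim_{qA}E=\lim_{\theta\to 1}\overline{\dim}_{\Phi_\theta}E$ with constant $\Phi_\theta\equiv\delta$, combined with monotonicity of the $\Phi$-dimensions in $\Phi$.

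For (i), the right-hand inequality in (\ref{PreDepth}) gives $\Phi(s_n)\geq \tfrac{\phi(n)-1}{n}\cdot\tfrac{\log\rho}{\log\tau}$, so the hypothesis $\phi(n)/n\to\infty$ forces $\Phi(s_n)\to\infty$. To pass from the scales $s_n$ to arbitrary $R\to 0$, I would use that for $R\in(s_{n+1},s_n]$ the monotonicity of $x^{1+\Phi(x)}$ gives $R^{1+\Phi(R)}\leq s_n^{1+\Phi(s_n)}$, hence $(1+\Phi(R))|\log R|\geq(1+\Phi(s_n))|\log s_n|$; the doubling of $\{a_j\}$ (so $s_{n+1}\sim s_n$) makes $|\log s_n|/|\log s_{n+1}|\to 1$, and consequently $\Phi(R)\to\infty$ as $R\to 0$. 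Proposition \ref{Prop:box}(ii) then gives $\overline{\dim}_\Phi C_a=\overline{\dim}_B C_a$. For the lower dimension, the doubling assumption on $a$ makes $C_a$ uniformly perfect, so $\underline{\dim}_\Phi C_a\geq \dim_L C_a>0$, and the second conclusion of Proposition \ref{Prop:box}(ii) applies.

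For (ii), I would use (\ref{depth/dim}) in the following form: if we define a dimension function $\Phi_\delta$ via the construction in the Remark by setting $\phi_\delta(n)=\lfloor\delta n\rfloor$, then $\Phi_\delta(s_n)$ is sandwiched between $c_1\delta$ and $c_2\delta$, with constants depending only on $\tau,\rho$. Since the $\Phi$-dimensions are monotone in $\Phi$, $\overline{\dim}_{\Phi_\delta}C_a$ lies between the dimensions obtained from the constant dimension functions $c_2\delta$ and $c_1\delta$. By Corollary \ref{Coro:p}, each of these equals the $\theta_i$-Assouad spectrum $\overline{\dim}_A^{\theta_i}C_a$ with $\theta_i=1/(1+c_i\delta)$, which tends to $1$ as $\delta\to 0$. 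The definition of $\dim_{qA}$ then forces $\overline{\dim}_{\Phi_\delta}C_a\to\dim_{qA}C_a$; the argument for the lower dimensions is completely analogous, using the lower version of Proposition \ref{Prop:box} and the definition of $\dim_{qL}$.

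The main obstacle is the mismatch in (ii): a prescribed depth function only pins down the associated $\Phi$ up to multiplicative constants at the scales $s_n$, so one cannot simply appeal to Proposition \ref{Propo-equal}. The resolution is the direct sandwich argument sketched above, which is robust because both $c_1\delta$ and $c_2\delta$ tend to $0$ together as $\delta\to 0$, and hence both enveloping $\theta_i$-Assouad spectra converge to the quasi-Assouad value.
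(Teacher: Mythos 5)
Your approach is essentially the same as the paper's, which proves the corollary in one line by citing $\phi(n)/n\sim\Phi(s_n)$ (equation (\ref{depth/dim})) together with Proposition \ref{Prop:box}(ii) and uniform perfectness of $C_a$; you simply fill in the details that the paper leaves implicit. For (i), your careful passage from the discrete scales $s_n$ to arbitrary $R$ (using monotonicity of $x^{1+\Phi(x)}$ and the doubling bound $s_n/s_{n+1}\leq 2+\kappa^2$, so $|\log s_n|/|\log s_{n+1}|\to 1$) is a genuine and worthwhile addition, since Proposition \ref{Prop:box}(ii) requires $\Phi(x)\to\infty$ for all $x\to 0$, not just along $\{s_n\}$. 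For (ii), your sandwich idea is correct in spirit, but note one small technical point you glide over: from $\Phi_\delta(s_n)\in[c_1\delta,c_2\delta]$ you cannot immediately invoke monotonicity of $\overline{\dim}_\Phi$ in $\Phi$, which is a pointwise comparison in $R$; you either need to extend the two-sided bound on $\Phi_\delta$ to all $R$ (same doubling argument as in (i), at the cost of slightly widening the constants), or observe directly that the Cantor-set formula (\ref{Thetadim}) depends only on the depth function $\phi$ and that (\ref{Thetadim}) is monotone in $\phi$, so it suffices to squeeze $\phi_\delta(n)=\lfloor\delta n\rfloor$ between the depth functions of two constant dimension functions $c_1\delta$ and $c_2\delta$. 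Either patch is routine, so the argument as a whole is sound.
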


\begin{proof}
These follow from the fact that $\phi (n)/n\sim \Phi (s_{n})$. The statement
in (i) about the lower $\Phi$ dimension follows from Proposition \ref%
{Prop:box} (ii), since $\underline{\dim}_{\Phi} C_a > 0$ because $C_a$ is
uniformly perfect.
\end{proof}

More generally, we have the following formulas for the $\Phi $-dimensions of
Cantor sets.

\begin{theorem}
\label{Cantorformula}Let $a$ be a decreasing, summable, doubling sequence
and $C_{a}$ the associated Cantor set. The upper and lower $\Phi $%
-dimensions of $C_{a}$ are given by 
\begin{equation}
\overline{\dim }_{\Phi }C_{a}=\inf \Bigl\{\beta :(\exists \ k_{0},c_{0}>0)\
(\forall k\geq k_{0}\text{, }n\geq \phi (k))\text{ }\left( \frac{s_{k}}{%
s_{k+n}}\right) ^{\beta }\geq c_{0}2^{n}\Bigr\}  \label{Thetadim}
\end{equation}%
and%
\begin{equation}
\underline{\dim }_{\Phi }C_{a}=\sup \Bigl\{\beta :(\exists \ k_{0},c_{0}>0)\
(\forall k\geq k_{0}\text{, }n\geq \phi (k))\text{ }\left( \frac{s_{k}}{%
s_{k+n}}\right) ^{\beta }\leq c_{0}2^{n}\Bigr\}.  \label{LowerDim}
\end{equation}
\end{theorem}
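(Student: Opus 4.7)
The plan is to translate the continuous quantities $R,r$ appearing in the definition of $\overline{\dim}_\Phi$ and $\underline{\dim}_\Phi$ into the discrete level indices of the Cantor construction, exploiting the assumption that $\{s_n\}$ is doubling so that $s_n\sim s_{n+1}$. Given any $R<s_{k_0-1}$ (for some fixed $k_0$), there is a unique $k$ with $s_{k}< R\leq s_{k-1}$, and similarly an $n$ with $s_{k+n}<r\leq s_{k+n-1}$. By doubling, $R\sim s_k$ and $r\sim s_{k+n}$. I would then verify that the condition $r\leq R^{1+\Phi(R)}$ is equivalent, up to harmless constants, to $n\geq \phi(k)$: this is essentially the definition of $\phi$ combined with the estimate $\tau s_k^{\Phi(s_k)}\leq R^{\Phi(R)}\leq \tau^{-1} s_{k-1}^{\Phi(s_{k-1})}$ already noted before \eqref{PreDepth}, together with the doubling property of $\{s_j\}$.

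The heart of the proof is the uniform two-sided estimate
\begin{equation*}
N_r(B(z,R)\cap C_a)\asymp 2^n\qquad \text{when } z\in C_a,\ R\sim s_k,\ r\sim s_{k+n},
\end{equation*}
with constants independent of $z,k,n$. For the upper bound, the point $z$ lies in at most a bounded number of level-$k$ Cantor intervals met by $B(z,R)$ (since $R\lesssim s_{k-1}$ and consecutive level-$k$ intervals are separated by a gap of length at most $s_{k-1}$), and each such level-$k$ interval contains exactly $2^n$ level-$(k+n)$ intervals, each of diameter $\lesssim s_{k+n}\sim r$, yielding a cover by $O(2^n)$ balls of radius $r$. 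For the lower bound, $z\in C_a$ forces $B(z,R)$ to contain at least one entire level-$k$ interval (up to a bounded loss from taking a slightly smaller ball), and that interval contains $2^n$ disjoint level-$(k+n)$ intervals; these intervals are pairwise separated by gaps of length at least $s_{k+n-1}\gtrsim r$, so any cover by balls of radius $r$ needs at least $\gtrsim 2^n$ balls. (Here one should use the equivalence of packing and covering numbers noted earlier, and the bound $s_{n+1}\leq \mathrm{length}(I_j^n)\leq s_{n-1}$.)

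Combining these two ingredients: the condition in the definition of $\overline{\dim}_\Phi$ becomes, after substitution, the requirement that there exist $c_0>0,k_0$ such that $2^n\leq c_0^{-1}(s_k/s_{k+n})^\beta$ for all $k\geq k_0$ and $n\geq \phi(k)$, which is exactly \eqref{Thetadim}. The argument for \eqref{LowerDim} is symmetric: using the lower-bound direction of the covering estimate and replacing $\sup_{z\in E}$ by $\inf_{z\in E}$ changes nothing in the Cantor setting, since every $z\in C_a$ produces the same asymptotic count $\asymp 2^n$ up to universal constants.

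The main obstacle I expect is bookkeeping the two sources of slack — the discretization $R\sim s_k$, $r\sim s_{k+n}$, and the passage between $r\leq R^{1+\Phi(R)}$ and $n\geq \phi(k)$ — without losing or gaining dimension. Specifically, off-by-one errors in $k$ or $n$ absorb into the multiplicative constant $c_0$, but one must check that taking the infimum (resp.\ supremum) over $\beta$ is insensitive to these absorptions; that is, if the ratio inequality $(s_k/s_{k+n})^\beta\geq c_0 2^n$ holds for $n\geq \phi(k)$, one must also derive the covering estimate $N_r(B(z,R)\cap C_a)\leq c(R/r)^{\beta'}$ for \emph{every} $\beta'>\beta$ and \emph{all} admissible $r\leq R^{1+\Phi(R)}$, including the fractional values of $R$ between the $s_k$'s. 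This is where the doubling hypothesis $\tau=\inf s_{n+1}/s_n>0$ does the real work, since it keeps $s_k/s_{k+n}$ comparable to $R/r$ uniformly.
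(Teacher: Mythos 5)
The paper does not include its own proof of Theorem~\ref{Cantorformula} (it refers to \cite{GHM}, \cite{CWC}, \cite{LX}), but your strategy is the natural one and matches the standard approach: discretize $R\sim s_k$, $r\sim s_{k+n}$, establish $N_r(B(z,R)\cap C_a)\asymp 2^n$ uniformly in $z\in C_a$, translate $r\le R^{1+\Phi(R)}$ to $n\ge \phi(k)$ up to an additive constant, and check that the doubling hypothesis makes the formula insensitive to such shifts. That last point you flag but do not carry out; it does work, since replacing $\phi(k)$ by $\phi(k)\pm c$ changes the relevant quotients $s_k/s_{k+n}$ only by a bounded factor $\tau^{\pm c}$, which is absorbed into $c_0$.

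There is, however, a genuine error in the justification of the lower bound $N_r\gtrsim 2^n$. You assert that the $2^n$ level-$(k+n)$ subintervals of a level-$k$ interval ``are pairwise separated by gaps of length at least $s_{k+n-1}$.'' This is false for a general doubling sequence $a$. The gaps between sibling level-$(k+n)$ intervals are the gaps removed at step $k+n$, of lengths $a_{2^{k+n-1}},\dots,a_{2^{k+n}-1}$, and these need not be comparable to $s_{k+n-1}$ (or even to $s_{k+n}$). For example, a central Cantor set with ratios $r_m=\tfrac12-\tfrac1m$ has $\{a_j\}$ decreasing and doubling, yet the step-$m$ gap has length $2s_{m-1}/m\ll s_{m-1}$. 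The comparability $a_{2^m}\sim s_m$ (see~\eqref{ineq3}) requires the stronger \emph{level comparable} hypothesis $\sup s_{j+1}/s_j<1/2$, which Theorem~\ref{Cantorformula} does not assume. The fix is the bound you mention only in passing: each level-$(k+n)$ interval has length at least $s_{k+n+1}\ge\tau\, s_{k+n}\gtrsim r$, so any interval of length $2r$ meets at most $\lfloor 2r/s_{k+n+1}\rfloor+1=O(1)$ of them, whence a cover by $r$-balls needs $\gtrsim 2^{n-1}$ balls. (The same remark applies to the upper-bound justification: what bounds the number of level-$k$ intervals met by $B(z,R)$ is that each has length $\ge s_{k+1}\gtrsim R$, not any upper bound on the separating gaps, which in fact can be as large as $a_1$.) With these corrections the argument is sound.
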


The proof is omitted as the arguments are similar to those given in \cite%
{GHM} for Assouad dimensions and in \cite{CWC} and \cite{LX} for the
quasi-Assouad dimensions.

\subsection{Basic properties revisited}

With the formulas for the $\Phi $-dimensions of Cantor sets, it is easy to
give examples of sets with any specified $\Phi $-dimension in $(0,1)$. The
key idea is that if $E$ is a central Cantor set with ratios of dissection $%
r_{k}$ at step $k,$ and there is an increasing sequence of integers $%
\{n_{j}\}$ (possibly even very sparse) such that $r_{k}=\rho $ for all $%
k=n_{j}+1,\ldots,n_{j}+\phi (n_{j})$ and $r_{k}=\tau \leq \rho $ otherwise,
then $\overline{\dim }_{\Phi }E=\log 2/\left\vert \log \rho \right\vert ,$
where $\Phi /\phi $ is a dimension/depth function pair associated with $E$.
A similar idea can be applied for the lower $\Phi $-dimension.

In this subsection we will use this principle to obtain (partial) converses
to Proposition \ref{DimEqual}. First, we will show that the continuity
properties described in Proposition \ref{DimEqual}(ii) can fail when $%
g(t_{0})=0$.

\begin{proposition}
\label{ContFailure}Suppose $\phi $ is an increasing depth function tending
to infinity, but with $\phi (n)/n\rightarrow 0$ as $n\rightarrow \infty $.
There is a central Cantor set $E$ such that if $\Phi $ is the dimension
function associated with the depth function $\phi $ (and Cantor set $E$) and 
$\Phi _{t}=t\Phi ,$ then $\dim _{\Phi _{t}}E\in \lbrack \dim _{qA}E,\dim
_{A}E]$, but $\lim_{t\rightarrow 0}\overline{\dim }_{\Phi _{t}}E<\dim _{A}E.$
\end{proposition}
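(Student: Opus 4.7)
The plan is to construct a central Cantor set whose Assouad dimension is realised at ``shallow'' depth (depth $1$) at isolated sparse levels, while the depth required by $\Phi_t$ blows up as the scale shrinks, forcing any admissible window to be dominated by levels of small ratio. Fix $0<\tau<\rho<1/2$, and choose a rapidly increasing sequence of positive integers $\{m_\ell\}$ with $m_{\ell+1} \ge (m_\ell+\phi(m_\ell))^2$; let $E$ be the central Cantor set with ratios of dissection
\begin{equation*}
r_i = \rho \text{ if } i\in\{m_\ell\}_{\ell\ge 1}, \qquad r_i=\tau \text{ otherwise.}
\end{equation*}
By \eqref{depth/dim}, the dimension function $\Phi$ associated with $\phi$ and $E$ satisfies $\Phi(s_n)\sim \phi(n)/n\to 0$, so each $\Phi_t=t\Phi$ is also a dimension function tending to $0$; in particular, $\overline{\dim}_{\Phi_t}E\in[\dim_{qA}E,\dim_A E]$ holds automatically.

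For the Assouad dimension I would apply Theorem \ref{Cantorformula} (formally with trivial depth). Taking $k=m_\ell-1$ and $n=1$ in \eqref{Thetadim} yields $\dim_A E\ge \log 2/|\log\rho|$; since $|\log\tau|>|\log\rho|$, enlarging the window (or moving $k$ away from an $m_\ell$) only adds $\tau$-contributions to the denominator $\sum_{i=k+1}^{k+n}|\log r_i|$, so $\dim_A E = \log 2/|\log \rho|$.

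For the upper $\Phi_t$-dimension, the key observation is that the depth function of $\Phi_t$ satisfies $\phi_t(n)\sim t\phi(n)$ (again by \eqref{depth/dim}), so $\phi_t(n)\to\infty$ as $n\to\infty$ for every fixed $t>0$. Applying \eqref{Thetadim} to $\Phi_t$, for any admissible window $[k+1,k+n]$ with $k$ large and $n\ge \phi_t(k)$, let $p$ denote the number of indices $m_\ell$ inside the window. The super-geometric sparsity of $\{m_\ell\}$ ensures $p/n\to 0$ uniformly: when $n$ is small compared to the spacing $m_{\ell+1}-m_\ell$ one has $p\le 1$ while $n\ge\phi_t(k)\to\infty$, and when $n$ is much larger the number of $m_\ell$ inside $[k+1,k+n]$ grows much slower than $n$. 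Since
\begin{equation*}
\frac{n\log 2}{(n-p)|\log\tau|+p|\log\rho|}\longrightarrow \frac{\log 2}{|\log\tau|}\quad\text{as } p/n\to 0,
\end{equation*}
we conclude $\overline{\dim}_{\Phi_t}E = \log 2/|\log\tau|$ for every $t>0$, hence
\begin{equation*}
\lim_{t\to 0^+}\overline{\dim}_{\Phi_t}E = \frac{\log 2}{|\log\tau|} < \frac{\log 2}{|\log\rho|}=\dim_A E.
\end{equation*}

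The main technical obstacle is the uniform estimate $p/n=o(1)$ across all admissible pairs $(k,n)$. This needs a case split: (i) $k$ lies just before some $m_\ell$ but $n$ is too small to reach $m_{\ell+1}$ (the worst case, where $p=1$ but $n\ge\phi_t(k)\to\infty$); (ii) $n$ is large enough to straddle several $m_\ell$'s (where the rapid growth of $\{m_\ell\}$ forces $p$ to be at most logarithmic in $n$); and (iii) the window avoids the $\rho$-levels entirely (trivial). The recursive choice $m_{\ell+1}\ge(m_\ell+\phi(m_\ell))^2$ is calibrated precisely so that each case yields ratio at most $\log 2/|\log\tau|+o(1)$ uniformly.
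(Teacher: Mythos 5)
Your construction has a genuine gap: the Assouad dimension of the central Cantor set with \emph{isolated} $\rho$-levels at super-sparse indices $\{m_\ell\}$ is $\log 2/|\log\tau|$, not $\log 2/|\log\rho|$ as you claim. The step ``taking $k=m_\ell-1$ and $n=1$ yields $\dim_A E \ge \log 2/|\log\rho|$'' is incorrect, because in formula \eqref{Thetadim} (with trivial depth) one only needs $(s_k/s_{k+n})^\beta \ge c_0 2^n$ to hold for \emph{some} positive constant $c_0$. A window of length $1$ gives only the single constraint $\rho^{-\beta}\ge 2c_0$, which is satisfied for every $\beta>0$ by choosing $c_0$ small enough, so it imposes no lower bound on $\beta$ at all. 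More generally, if the $\rho$-levels are so sparse that every admissible window of length $n$ contains at most $p(n)$ of them with $p(n)/n\to 0$ uniformly (which is exactly what your super-geometric spacing guarantees, for \emph{all} $n\ge 1$, not just $n\ge\phi_t(k)$), then for any $\beta>\log 2/|\log\tau|$ one can check that $2^n\rho^{p\beta}\tau^{(n-p)\beta}=(2\tau^\beta)^n(\rho/\tau)^{p\beta}$ is bounded above over all windows, so $\dim_A E=\log 2/|\log\tau|$. Consequently both sides of your intended strict inequality collapse to $\log 2/|\log\tau|$ and the conclusion $\lim_{t\to 0}\overline{\dim}_{\Phi_t}E < \dim_A E$ fails.

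The fix, and the mechanism used in the paper's proof, is to place the $\rho$-ratios in \emph{blocks} of length $f(n_j)$ on the levels $n_j+1,\ldots,n_j+f(n_j)$, where $f(n_j)\to\infty$ (this pumps the Assouad dimension all the way up to $\log 2/|\log\rho|$, since windows lying entirely inside a $\rho$-block can be arbitrarily long) but $f(n_j)/\phi(n_j)\to 0$ (this ensures that every window of admissible depth $m\ge\phi_t(n)$ must overshoot the $\rho$-block and pick up at least as many $\tau$-ratios as $\rho$-ratios, so the geometric mean stays bounded away from $\rho$). The two requirements on $f$ are exactly what makes the example work, and there is no way to replace the blocks by isolated levels: an isolated anomalous level is always absorbed by the multiplicative constant $c_0$ in the definition of the Assouad dimension.
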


\begin{proof}
Choose $A,B>0$ from (\ref{PreDepth}) such that if $E$ is a Cantor set with $%
\inf s_{j+1}/s_{j}\geq 1/27$ and $\Psi /\psi $ is any dimension/depth
function pair associated with $E,$ then 
\begin{equation*}
A\frac{\psi (n)-1}{n}\leq \Psi (s_{n})\leq B\frac{\psi (n)}{n}\text{ for all 
}n\text{.}
\end{equation*}%
In particular, this holds for the depth function $\phi $ and any associated
dimension function $\Phi $, and also for the depth function $\phi _{t}$
associated with $t\Phi $. Without loss of generality we can assume $\phi
(n)\geq 2$ for all $n$ and therefore for all $k\in \mathbb{N}$ 
\begin{equation*}
A\frac{\phi (n)}{2kn}\leq \frac{1}{k}\Phi (s_{n})=\Phi _{1/k}(s_{n})\leq B%
\frac{\phi _{1/k}(n)}{n}.
\end{equation*}%
That shows that for each $k$ there is some $N_{k}$ such that if $n\geq N_{k}$%
, then $\phi _{1/k}(n)\geq 2$. Thus we also have $A\phi _{1/k}(n)/(2n)\leq
\Phi _{1/k}(s_{n})$ for all $n\geq N_{k}$ and therefore with the constant $%
C=A/(2B)$ (and any such Cantor set $E$) we have 
\begin{equation*}
\frac{C}{k}\phi (n)\leq \phi _{1/k}(n)\leq \frac{1}{Ck}\phi (n)\text{ for
all }k\text{ and }n\geq N_{k}\text{.}
\end{equation*}

To construct the Cantor set $E$, we will first choose an integer-valued
function $f(n)\rightarrow \infty $ with $f(n)/\phi (n)\rightarrow 0$ as $%
n\rightarrow \infty $. Then choose an increasing sequence of integers $%
\{n_{k}\}$ with $n_{k}\geq \max (2N_{k},8n_{k-1})$ and satisfying 
\begin{equation*}
f(n_{k})\leq \min \left( \frac{n_{k}}{8},\text{ }\frac{C}{2k}\phi
(n_{k}-f(n_{k}))\right) .
\end{equation*}%
The Cantor set will be defined by setting the ratios of dissection to be $%
1/3 $ on steps $n_{j}+1,\ldots,n_{j}+f(n_{j})$ for all $j=1,2,\ldots$ and
equal to $1/27$ on all other levels. Certainly, $\dim _{A}E=\log 2/\log 3$.

Let $r_{i}$ denote the ratio of dissection at step $i$. Our choice of $n_{j}$
ensures that if $n\in \{n_{j}-f(n_{j})+1,\ldots,n_{j}+f(n_{j})\}$ and $m\geq
2f(n_{j}),$ then at least as many $r_{i}=1/27$ as are equal to $1/3$ for $i$
ranging over $\{n+1,\ldots,n+m\}$. Hence the geometric mean of these ratios
is at most $1/9$. The same conclusion clearly also holds if $n\notin
\{n_{j}-f(n_{j})+1,\ldots,n_{j}+f(n_{j})\}$.

In order to bound $\overline{\dim }_{\Phi _{1/k}}E$ we use formula (\ref%
{Thetadim}), noting first that it suffices to consider $\left(
s_{n}/s_{n+m}\right) ^{1/m}$ where $n\geq n_{k}$ and $m\geq \phi _{1/k}(n)$.
If $n\in \{n_{j}-f(n_{j})+1,\ldots,n_{j}+f(n_{j})\}$ for some $j\geq k$,
then as $\phi $ is increasing and $n\geq N_{k}\,,$ 
\begin{equation*}
m\geq \phi _{1/k}(n)\geq \frac{C}{k}\phi (n)\geq \frac{C}{k}\phi
(n_{j}-f(n_{j}))\geq \frac{C}{j}\phi (n_{j}-f(n_{j}))>2f(n_{j})\text{.}
\end{equation*}%
By our previous remark, $\left( s_{n+m}/s_{n}\right) ^{1/m}\leq 1/9$. The
same bound clearly holds if $n\geq n_{k}$ does not belong to any such
interval. Consequently, (\ref{Thetadim}) implies $\overline{\dim }_{\Phi
_{1/k}}E\leq \log 2/\log 9$. By monotonicity, $\overline{\dim }_{\Phi
_{t}}E\leq \log 2/\log 9$ for all $t>0.$
\end{proof}

\begin{remark}
We remark that a similar argument could be used to prove that there is a
central Cantor set $E$ and dimension function $\Phi $ so that $%
\lim_{t\rightarrow 0}\underline{\dim }_{\Phi _{t}}E>\dim _{L}E$. One could
also similarly arrange for $\overline{\dim }_{\Phi _{t}}E\in \lbrack \dim
_{qA}E,\dim _{A}E]$, but $\lim_{t\rightarrow \infty }\overline{\dim }_{\Phi
_{t}}E>\dim _{qA}E$ and likewise for the quasi-lower Assouad dimension.
\end{remark}

We will use a similar technique to obtain a partial converse to Proposition %
\ref{DimEqual}.

\begin{theorem}
\label{Diff}Suppose $\Phi _{1},\Phi _{2}$ are dimension functions decreasing
to $0$ as $x\rightarrow 0$ with $\left\vert \log x\right\vert \Phi
_{2}(x)\rightarrow \infty $ as $x\rightarrow 0$. Assume there is some $\xi
>0 $ such that $\Phi _{1}(x)\geq (1+\xi )\Phi _{2}(x)$ for all $x$
sufficiently small. Then there is a Cantor set $E$ such that $\overline{\dim 
}_{\Phi _{1}}E<\overline{\dim }_{\Phi _{2}}E$ and a Cantor set $F$ with $%
\underline{\dim }_{\Phi _{1}}F>\underline{\dim }_{\Phi _{2}}F$.
\end{theorem}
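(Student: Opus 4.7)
The plan is to exploit the Cantor set formula in Theorem~\ref{Cantorformula} together with the depth/dimension relation~\eqref{depth/dim}. Both sets $E$ and $F$ will be central Cantor sets whose ratios of dissection take only two values, $\tau < \rho < 1/2$, arranged in schedules tailored to $\Phi_1$ and $\Phi_2$.

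For $E$, I would choose a rapidly increasing sequence of integers $n_j$, set the ratio of dissection equal to $\rho$ on levels $n_j+1,\dots,n_j+L_j$ (a ``thick window'') where $L_j:=\lceil\Phi_2(s_{n_j})|\log s_{n_j}|/|\log\rho|\rceil$, and equal to $\tau$ elsewhere. Since all ratios lie in $[\tau,\rho]$, one has $|\log s_n|\asymp n$, and the depth functions $\phi_1,\phi_2$ of the resulting set can be read off from the schedule. By design $\phi_2(n_j)=L_j$, and the gap hypothesis $\Phi_1\geq(1+\xi)\Phi_2$ combined with $|\log x|\Phi_2(x)\to\infty$ forces $\phi_1(n_j)>L_j$ by an amount $\geq \xi L_j|\log\rho|/|\log\tau|$, so the $\Phi_1$-admissible pair $(n_j,\phi_1(n_j))$ extends past the thick window by a fixed positive proportion. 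The pair $(n_j,\phi_2(n_j))$ lies entirely inside the thick window, yielding $\overline{\dim}_{\Phi_2}E\geq\log 2/|\log\rho|$ via Theorem~\ref{Cantorformula}. For the opposite bound $\overline{\dim}_{\Phi_1}E<\log 2/|\log\rho|$, it suffices to show that over every $\Phi_1$-admissible window $(k,k+n]$ the arithmetic mean of $|\log r_i|$ is bounded below by a quantity strictly exceeding $|\log\rho|$; at $k=n_j$ this quantity is $(|\log\rho|+\alpha|\log\tau|)/(1+\alpha)$ with $\alpha=\xi|\log\rho|/|\log\tau|$, which exceeds $|\log\rho|$ because $\tau<\rho$.

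For $F$, I would reverse the roles: use a mostly-thick schedule (ratio $\rho$) with sparse thin windows (ratio $\tau$) of length $\lceil\Phi_2(s_{n_j})|\log s_{n_j}|/|\log\tau|\rceil$, and apply the analogous formula for the lower dimension in Theorem~\ref{Cantorformula}. The main obstacle in both cases is verifying the uniform arithmetic-mean estimate over \emph{all} admissible pairs $(k,n)$: $k$ may lie in the interior of a window rather than at its left endpoint, and $n$ may span several windows. Sparsity of $\{n_j\}$ controls the multi-window case; for $k$ in the interior, the monotonicity of $x^{1+\Phi(x)}$ lets one compare $\Phi_i(s_k)$ with $\Phi_i(s_{n_j})$ and confirm that the $(1+\xi)$-gap between $\phi_1$ and $\phi_2$ persists throughout the window, ruling out any ``bad'' contribution.
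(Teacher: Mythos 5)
Your proposal follows essentially the same route as the paper's proof: a two-valued central Cantor set with ``thick'' windows of length $\phi_2(n_j)$ at sparse levels $n_j$, so that $\Phi_2$-admissible pairs anchored at $n_j$ see only the window ratio $\rho$ while $\Phi_1$-admissible pairs extend past the window by a fixed positive proportion, and then a uniform geometric-mean estimate over all starting levels $k$ handled via monotonicity and sparsity of $\{n_j\}$. The one noteworthy difference is bookkeeping: the paper first tunes $\tau,\rho$ so that $|\log\tau|/|\log\rho|\le 1+\varepsilon$ in order to control the factor-of-two slack in the depth/dimension comparison (\ref{depth/dim}), whereas you work directly with the exact characterisation $\sum_{i=n+1}^{n+\phi(n)}|\log r_i|\ge\Phi(s_n)|\log s_n|$, from which the extension estimate $\phi_1(n_j)-L_j\ge\xi L_j|\log\rho|/|\log\tau|-O(1)$ follows for \emph{any} fixed $\tau<\rho<1/2$ once $L_j$ is large (which $|\log x|\Phi_2(x)\to\infty$ guarantees); this removes the need for the $\varepsilon$-tuning but does not change the substance of the argument.
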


\begin{proof}
We will give the proof for the upper $\Phi $-dimension. The lower $\Phi $%
-dimension case is similar.

The monotonicity property of the $\Phi $-dimensions implies that $%
\overline{\dim }_{\Phi _{1}}E\leq \overline{\dim }_{\Phi _{2}}E$ for all
sets $E$. It is the strictness of the inequality that we need to verify for
an appropriate choice of $E$.

The strategy of the proof will be to build a central Cantor set by
inductively specifying the ratios of dissection at each level. For most
levels, the ratio will be a fixed small number, say $\tau $. However, we
will specify the ratios to be a fixed number $\rho >\tau $ on the levels $%
n_{j}+1,\ldots,n_{j}+\phi _{2}(n_{j}),$ where $\phi _{2}$ is the depth
function associated with $\Phi _{2}$ and the Cantor set, and $\{n_{j}\}$ is
a sparse set. By consideration of $(s_{n_{j}+\phi
_{2}(n_{j})}/s_{n_{j}})^{1/\phi _{2}(n_{j})}$ (the geometric mean of the
ratios at levels $n_{j}+1,\ldots,n_{j}+\phi _{2}(n_{j})$) and the formula
for the $\Phi $-dimensions of Cantor sets from (\ref{Thetadim}), we have $%
\overline{\dim }_{\Phi _{2}}E=\log 2/|\log \rho |$. However, these depths
will be too shallow to give the $\Phi _{1}$-dimension and consequently we
will be able to conclude that $\overline{\dim }_{\Phi _{1}}E<\log 2/|\log
\rho |$.

One complication with this strategy is that the depth functions depend on
the construction of the Cantor set. However, our construction of the Cantor
set depends (at least, to some extent) on the depth functions. Fortunately,
we do have enough control on the depth functions to overcome this
complication. We address this issue first.

Fix small $\varepsilon >0$ such that 
\begin{equation*}
\left( \frac{1-\varepsilon }{1+\varepsilon }\right) ^{2}(1+\xi )\geq (1+\xi
/2).
\end{equation*}%
Choose $0<\tau <\rho <1/2$ with $\left\vert \log \tau /\log \rho \right\vert
\leq 1+\varepsilon $. It follows from (\ref{PreDepth}) that if $E$ is any
Cantor set with all ratios between $\tau $ and $\rho ,$ and $\Phi /\phi $
any dimension/depth function pair associated with $E$, then%
\begin{equation}
\frac{(\phi (n)-1)}{n}\frac{\log \rho }{\log \tau }\leq \Phi (s_{n})\leq 
\frac{\phi (n)}{n}\frac{\log \tau }{\log \rho }\leq (1+\varepsilon )\frac{%
\phi (n)}{n}.  \label{Phi}
\end{equation}%
By assumption, given any $C>0,$ there is some $x_{0}=x_{0}(C)$ such that if $%
x\leq x_{0},$ then $\left\vert \log x\right\vert \Phi _{i}(x)\geq C$. Choose 
$N_{0}$ such that $\tau ^{N_{0}}\leq x_{0}$. Since the functions $\Phi _{i}$
are decreasing as $x\rightarrow 0,$ it follows that if $n\geq N_{0}$ and $E$
is a Cantor set with all ratios of dissection at least $\tau ,$ then $\Phi
_{i}(s_{n})\geq \Phi _{i}(\tau ^{n})\geq C/\left\vert \log \tau
^{n}\right\vert $ and hence $n\Phi _{i}(s_{n})\geq (1+\varepsilon
)/\varepsilon $ if we take a suitable choice for $C,$ depending on $%
\varepsilon $ and $\tau $. Coupled with the right hand side of (\ref{Phi}),
this shows that for all $n\geq N_{0},$ 
\begin{equation*}
\phi _{i}(n)\geq \frac{n\Phi _{i}(s_{n})}{1+\varepsilon }\geq \frac{1}{%
\varepsilon }
\end{equation*}%
and hence $\phi _{i}(n)-1\geq (1-\varepsilon )\phi _{i}(n)$ for $i=1,2$.
Consequently, using the left hand side of (\ref{Phi}) we also have%
\begin{equation*}
\frac{\phi _{i}(n)}{n}\frac{(1-\varepsilon )}{(1+\varepsilon )}\leq \Phi
_{i}(s_{n})\text{ for all }n\geq N_{0}.
\end{equation*}%
As $\Phi _{i}\downarrow 0$, this further ensures that there exists $N_{1}$
such that 
\begin{equation*}
\phi _{i}(n)\leq \varepsilon n\text{ for }n\geq N_{1}.
\end{equation*}%
We remind the reader that having fixed $\varepsilon ,\tau $ and $\rho ,$
these inequalities and the choices of $N_{0}$ and $N_{1}$ depend only $\Phi
_{1}$ and $\Phi _{2}$ for \textit{any} choice of Cantor set, provided the
ratios of dissection are chosen from $[\tau ,\rho ]$. As we will see, these
relationships give us enough control on the depth functions.

\smallskip

\textbf{Construction of the Cantor Set: }

We will continue to use the notation from above. Let $n_{1}\geq \max
(8N_{0},8N_{1})$ and choose $n_{j+1}\geq 16n_{j}$. We will inductively
define a central Cantor set by specifying the ratios of dissection $r_{k}$
at each level $k$. To begin, we put $r_{k}=\tau $ for $k=1,\ldots,n_{1}$.
Thus $s_{n_{1}}=\tau ^{n_{1}}$. Define $\ell _{1}$ to be the least integer
with $\rho ^{\ell _{1}}\leq s_{n_{1}}^{\Phi _{2}(s_{n_{1}})}$ and let $%
r_{k}=\rho $ for $k=n_{1}+1,\ldots,n_{1}+\ell _{1}$. Notice that this
construction means $\ell _{1}=\phi _{2}(n_{1})\leq \varepsilon n_{1}$, thus $%
n_{1}+\ell _{1}<n_{2}$. We put $r_{k}=\tau $ for $k=n_{1}+\ell
_{1}+1,\ldots,n_{2}$.

Now we proceed inductively. We assume integers $\ell _{1},\ldots,\ell _{j-1}$
have been chosen in the same way and we have put $r_{k}=\rho $ if $%
k=n_{i}+1,\ldots,n_{i}+\ell _{i}$ for 
$i=1,\ldots,j-1,$ and $r_{k}=\tau $
otherwise on $\{1,\ldots,n_{j}\}$. Thus $s_{n_{j}}$ is determined. Define $%
\ell _{j}$ to be the least integer satisfying $\rho ^{\ell _{j}}\leq
s_{n_{j}}^{\Phi _{2}(s_{n_{j}})}$. We will put $r_{k}=\rho $ if $%
k=n_{j}+1,\ldots,n_{j}+\ell _{j}$ and $r_{k}=\tau $ on $\{n_{j}+\ell
_{j}+1,\ldots,n_{j+1}\}$. Again $\ell _{j}=\phi _{2}(n_{j})\leq \varepsilon
n_{j}$. This completes the construction of $E$.

\smallskip

\textbf{Verification of the }$\Phi _{i}$\textbf{-dimensions:}

The fact that the ratios equal $\rho $ on the consecutive levels $%
n_{j}+1,\ldots,n_{j}+\phi _{2}(n_{j})$ for all $j$ and are equal to $\tau $
otherwise, certainly means $\overline{\dim }_{\Phi _{2}}E=\log 2/\left\vert
\log \rho \right\vert .$

Since $\phi _{i}(n_{j})\leq \varepsilon n_{j}$ and $\Phi _{1}$ is
decreasing, the choice of $\varepsilon $ gives that for each $j$ and $n\in
\{n_{j}-\ell _{j},\ldots,n_{j}\},$ 
\begin{eqnarray}
\phi _{1}(n) &\geq &\frac{n}{1+\varepsilon }\Phi _{1}(s_{n})\geq \frac{%
n_{j}-\ell _{j}}{1+\varepsilon }\Phi _{1}(s_{n_{j}})  \notag \\
&\geq &\frac{(1-\varepsilon )n_{j}}{1+\varepsilon }\Phi _{1}(s_{n_{j}})\geq 
\frac{1-\varepsilon }{1+\varepsilon }(1+\xi )n_{j}\Phi _{2}(s_{n_{j}})
\label{2} \\
&\geq &\left( \frac{1-\varepsilon }{1+\varepsilon }\right) ^{2}(1+\xi )\phi
_{2}(n_{j})\geq (1+\xi /2)\phi _{2}(n_{j}).  \notag
\end{eqnarray}%
Since the sequence $\{s_{n}^{1+\Phi (s_{n})}\}$ is decreasing (for any
dimension function $\Phi $), for any $n$, $m\geq 1$ and associated depth
function $\phi $ we have%
\begin{equation*}
s_{n+m+\phi (n+m)}\leq s_{n+m}^{1+\Phi (s_{n+m})}\leq s_{n}^{1+\Phi
(s_{n})}<s_{n+\phi (n)-1}
\end{equation*}%
by the definition of $\phi $. That means $n+m+\phi (n+m)>n+\phi (n)-1$, and
as these are integers this implies, in particular, that for $i=1,2,$ 
\begin{equation}
n_{j}+m+\phi _{i}(n_{j}+m)\geq n_{j}+\phi _{i}(n_{j})  \label{Decrphi}
\end{equation}%
for all $m\geq 1$. As (\ref{2}) holds for $n=n_{j}$, this gives%
\begin{eqnarray}
n_{j}+m+\phi _{1}(n_{j}+m)-(n_{j}+\phi _{2}(n_{j})) &\geq &n_{j}+\phi
_{1}(n_{j})-(n_{j}+\phi _{2}(n_{j}))  \notag \\
&\geq &(\xi /2)\phi _{2}(n_{j}).  \label{3}
\end{eqnarray}

Since $\phi _{i}(n)\leq \varepsilon n$ for all $n\geq N_{1}$ and $\ell
_{j}=\phi _{2}(n_{j}),$ we also know that 
\begin{eqnarray*}
n_{j}+\phi _{2}(n_{j})+\max_{n\in \lbrack n_{j}-\ell _{j},n_{j}+\ell
_{j}]}\phi _{1}(n) &\leq &n_{j}+\varepsilon n_{j}+\varepsilon (n_{j}+\ell
_{j}) \\
&\leq &(1+\varepsilon )^{2}n_{j}\leq n_{j+1}/4 \\
&<&(n_{j+1}-\phi _{2}(n_{j+1}))/2.
\end{eqnarray*}%
In particular, this guarantees that if $n\in \{n_{j}-\ell
_{j}+1,\ldots,n_{j}+\ell _{j}\}$, then $n+\phi _{1}(n)<(n_{j+1}-\phi
_{2}(n_{j+1}))/2$. Together with (\ref{3}), it follows that for such $n$
there are at least $\left( \xi /2\right) \phi _{2}(n_{j})$ ratios equal to $%
\tau $ and at most $\phi _{2}(n_{j})=\ell _{j}$ ratios equal to $\rho $ on
the levels $n+1,\ldots,n+\phi _{1}(n)$. Hence the geometric mean of these
ratios is dominated by 
\begin{equation*}
\left( \rho ^{\ell _{j}}\tau ^{\xi \ell _{j}/2}\right) ^{1/((1+\xi /2)\ell
_{j})}=\rho ^{1/(1+\xi /2)}\tau ^{\xi /(2+\xi )}:=\sigma <\rho .
\end{equation*}

If $m\geq \phi _{1}(n)$, the choice of ratios ensures that there could only
be an even greater proportion of the ratios on the levels $n+1,\ldots,n+m$
having value $\tau $. Thus we can conclude that the geometric mean of the
ratios from the levels $n+1,\ldots,n+m$ is also dominated by $\sigma $
whenever $m\geq \phi _{1}(n)$ and $n\in \{n_{j}-\ell
_{j}+1,\ldots,n_{j}+\ell _{j}\}.$

If $n$ $\notin \{n_{j}-\ell _{j}+1,\ldots,n_{j}+\ell _{j}\}$ for any $j$,
then it is obvious from the construction that, on the levels $n+1,\ldots,n+m$
(for any $m\geq 1$), there are at least as many
ratios equal to $\tau $ as equal to $\rho $, and hence the geometric mean is even smaller.

We deduce that 
\begin{equation*}
\overline{\dim }_{\Phi _{1}}E\leq \frac{\log 2}{\left\vert \log \sigma
\right\vert }<\frac{\log 2}{\left\vert \log \rho \right\vert }=\overline{%
\dim }_{\Phi _{2}}E,
\end{equation*}
which concludes the proof.
\end{proof}

A modification of this argument would allow us to show that given $0<a<b<1/2$
there is an example of a Cantor set $E$ where 
\begin{equation*}
\overline{\dim }_{\Phi _{1}}E=\frac{\log 2}{\left\vert \log a\right\vert }<%
\frac{\log 2}{\left\vert \log b\right\vert }=\overline{\dim }_{\Phi _{2}}E.
\end{equation*}%
To do this, we will choose $0<c<a$. Then, instead of assigning ratio $\rho $
on the levels $n_{j}+1,\ldots,n_{j}+\phi _{2}(n_{j})$ and $\tau $ otherwise,
we will put ratios $b$ on levels $n_{2j}+1,\ldots,n_{2j}+\phi _{2}(n_{2j})$,
ratios $a$ on levels $n_{2j+1}+1,\ldots,n_{2j+1}+\phi _{1}(n_{2j+1})$ and
ratio $c$ elsewhere. The choice of sequence $\{n_{j}\}$ may need to be even
more sparse to ensure that $\phi _{1}(n_{j})$ is sufficiently large in
comparison with $\phi _{2}(n_{j})$ to guarantee that the geometric mean of
ratios from any $\phi _{1}(n)$ consecutive levels beginning at $n$ is at
most $a$. The fact that the ratios at levels $n_{2j+1}+1,\ldots,n_{2j+1}+%
\phi _{1}(n_{2j})$ are equal to $a$ implies that $\overline{\dim }_{\Phi
_{1}}E=\frac{\log 2}{\left\vert \log a\right\vert }$. From their values on
levels $n_{2j}+1,\ldots,n_{2j}+\phi _{2}(n_{2j})$ one can deduce that $%
\overline{\dim }_{\Phi _{2}}E=$ $\frac{\log 2}{\left\vert \log b\right\vert }
$. The details are left for the reader.

A further modification of the argument would also enable us to construct a
(single) Cantor set $E$ with both $\overline{\dim }_{\Phi _{1}}E<\overline{%
\dim }_{\Phi _{2}}E$ and $\underline{\dim }_{\Phi _{1}}E>\underline{\dim }%
_{\Phi _{2}}E$.

\subsection{Continuum of $\Phi $-dimensions}

In the next result we use the method described in the previous remark to
show that we can construct a Cantor set with countably many specified values
for $\Phi $-dimensions. Furthermore, there is a Cantor set with a continuum
of $\Phi $-dimensions between the quasi-Assouad and Assouad dimensions.

\begin{theorem}
\label{continuum}Assume that for each $p\in (0,1),$ $\Phi _{p}$ are
dimension functions decreasing to $0$ as $x\rightarrow 0$ and satisfying $%
\left\vert \log x\right\vert \Phi _{p}(x)\rightarrow \infty $ as $%
x\rightarrow 0$. Assume, also, that 
\begin{equation*}
\Phi _{p}(x)/\Phi _{q}(x)\rightarrow \infty \text{ as }x\rightarrow 0\text{
whenever }p>q.
\end{equation*}%
Choose any $0<\alpha <\beta <1$ and suppose $d:(0,1)\rightarrow \lbrack
\alpha ,\beta ]$ is monotonically decreasing and continuous. Then there is a
central Cantor set $E$ with 
\begin{equation*}
\overline{\dim }_{\Phi _{p}}E=d(p)\text{ for each }p\in (0,1)\text{.}
\end{equation*}

The analogous result holds for the lower $\Phi $-dimensions.
\end{theorem}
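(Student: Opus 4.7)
The plan is to enumerate a countable dense sequence $\{q_k\}$ in $(0,1)$, set target ratios $\rho_k = 2^{-1/d(q_k)}$, and construct a central Cantor set whose ratios of dissection equal a small background value $\tau$ with $\tau < 2^{-1/\alpha}$ everywhere except on carefully placed ``blocks'' of consecutive levels where the ratio is $\rho_k$ for various $k$. This is a direct extension of the technique in the proof of Theorem \ref{Diff}: if on a single block consisting of exactly $\phi_{q_k}(n_{k,j})$ consecutive levels starting at $n_{k,j}$ we set all ratios equal to $\rho_k$, then the geometric mean of those ratios is $\rho_k$, and the Cantor-set formula (\ref{Thetadim}) forces $\overline{\dim}_{\Phi_{q_k}} E \geq \log 2/|\log \rho_k| = d(q_k)$.

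The construction proceeds inductively. Interleaving the indices $k$, choose a rapidly increasing doubly-indexed sequence $\{n_{k,j}\}$ so that successive blocks are enormously far apart compared to the depth-function values of any $\Phi_{q_\ell}$ already defined on the constructed portion of the set. At level $n_{k,j}$ insert a block of length $\phi_{q_k}(n_{k,j})$ with all ratios equal to $\rho_k$; at all other levels use ratio $\tau$. Since $\Phi_{q_k}$ only sees $s_n$, which depends on the previously chosen ratios, each new block length is computable when we reach it.

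To verify $\overline{\dim}_{\Phi_{q_k}} E = d(q_k)$, the lower bound follows immediately by applying (\ref{Thetadim}) with $(m,n) = (n_{k,j}, \phi_{q_k}(n_{k,j}))$. For the upper bound, consider any window of $m \geq \phi_{q_k}(n)$ consecutive levels starting at some $n$. The sparsity ensures the window meets at most one inserted block, say a $q_j$-block of length $\phi_{q_j}(n_{j,i})$. Using the hypothesis $\Phi_p/\Phi_q \to \infty$ for $p>q$: if $q_j < q_k$ then $\phi_{q_j}(n) \ll \phi_{q_k}(n)$, so the block contributes a vanishing fraction and the geometric mean is dominated by $\tau < \rho_k$; if $q_j > q_k$ then the block's ratio is $\rho_j < \rho_k$ and the background $\tau$ only pulls the mean further down; if $q_j = q_k$ the geometric mean is at most $\rho_k$, with equality only when the window coincides exactly with the block. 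In every case the geometric mean is at most $\rho_k$ plus a quantity tending to $0$, giving $\overline{\dim}_{\Phi_{q_k}} E \leq d(q_k)$.

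Finally, the hypothesis $\Phi_p/\Phi_q \to \infty$ for $p>q$ implies $\Phi_p \geq \Phi_q$ eventually, so the map $p \mapsto \overline{\dim}_{\Phi_p} E$ is monotone decreasing. A monotone function that agrees with a continuous monotone function on a dense subset must agree everywhere, hence $\overline{\dim}_{\Phi_p} E = d(p)$ for every $p \in (0,1)$. The main obstacle is the inductive orchestration: choosing the spacings $n_{k,j}$ so that every window of length $\phi_{q_k}(n)$ meets at most one inserted block while still placing infinitely many blocks for each $k$, and simultaneously ensuring that the insertion of $\rho_k$-blocks does not perturb $\phi_{q_\ell}$ enough to invalidate the earlier estimates. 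Both issues are controlled by the ``sufficiently sparse'' growth rate for $\{n_{k,j}\}$ together with the doubling estimate (\ref{depth/dim}) linking $\phi_{q_k}(n)$ to $n\Phi_{q_k}(s_n)$. The lower $\Phi$-dimension statement is proved analogously, with the roles of background and block ratios reversed.
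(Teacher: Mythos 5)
Your proposal follows the same strategy as the paper's proof: enumerate a countable dense family of parameters, build a central Cantor set by inserting, at sparsely chosen levels $n_{k,j}$, blocks of length $\phi_{q_k}(n_{k,j})$ with ratio $\rho_k=2^{-1/d(q_k)}$, verify the dimensions using the Cantor-set formula~(\ref{Thetadim}), and pass from the dense set to all $p\in(0,1)$ by monotonicity of $p\mapsto\overline{\dim}_{\Phi_p}E$ together with continuity of $d$. The three-case analysis (native, deeper, shallower blocks) and the closing density argument are exactly the right ideas.

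There is, however, a genuine gap in the verification of the upper bound. You assert that ``the sparsity ensures the window meets at most one inserted block,'' but formula~(\ref{Thetadim}) requires control of $(s_n/s_{n+m})^{1/m}$ for \emph{all} $m\geq\phi_{q_k}(n)$, and a long window $\{n+1,\dots,n+m\}$ will straddle arbitrarily many blocks -- sparsity alone cannot prevent this. The paper's construction resolves this by building each block region $\{n_j+1,\dots,m_j\}$ with $m_j=4(n_j+\ell_j)$, so that a large fixed proportion of levels in the region are background and the tail of any long window past the first block always has geometric mean at most the background ratio $a$; this structural guarantee needs to be stated, since it is a separate constraint from the growth of $\{n_{k,j}\}$. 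In addition, to make the case $q_j<q_k$ go through you must impose explicitly, as the paper does in property~(ii) of its construction, that at each block location the ratio $\Phi_{q_k}(s_{n_{j,i}})/\Phi_{q_j}(s_{n_{j,i}})$ is already large for every relevant $q_k>q_j$ (with constants depending on the doubling estimate~(\ref{depth/dim})). The abstract hypothesis $\Phi_{q_k}/\Phi_{q_j}\to\infty$ makes this achievable but does not make it automatic: the inductive choice of $n_{k,j}$ must be made large enough for the \emph{finitely many} $q$'s already active at that stage, and the paper's bookkeeping (via the first-occurrence index $j_0$) is exactly what closes that loop.
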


\begin{remark}
An example of a class of dimension functions that satisfy the conditions of
the theorem are the functions $\Phi _{p}(x)=|\log x|^{p-1}$.
\end{remark}

\begin{proof}
We will construct a central Cantor set $E$ with the property that if $%
f:(0,1)\bigcap \mathbb{Q}\rightarrow \lbrack a,b]$ is monotonically
decreasing, then $\overline{\dim }_{\Phi _{p}}E=\log 2/\left\vert \log
f(p)\right\vert $ for every rational $p\in (0,1)$. To obtain the theorem,
put $a=2^{-1/\alpha },$ $b=2^{-1/\beta }$ and define the decreasing
continuous function $f:(0,1)\rightarrow \lbrack a,b]$ by $f(x)=2^{-1/d(x)}$.
The proof follows from this property using the monotonicity of the functions 
$p\rightarrow \overline{\dim }_{\Phi _{p}}E$ and the fact that the function $%
d$ of the theorem is assumed to be continuous and decreasing.

As in the proof of the previous theorem our strategy will be to inductively
define the ratios of dissection of the Cantor set. These ratios will lie in $%
[a^{2},b]$ and so by (\ref{PreDepth}), with $c=\log b/(2\log a)$ we have%
\begin{equation*}
c(\phi (n)-1)\leq n\Phi (s_{n})\leq \frac{1}{c}\phi (n)\text{ for all }n%
\text{,}
\end{equation*}%
for any dimension function $\Phi $ and corresponding depth function $\phi $
associated with such a Cantor set.

Since $\left\vert \log x\right\vert \Phi _{p}(x)\rightarrow \infty $ for
each $p,$ there is a choice of $I_{p}\in \mathbb{N}$ such that if $n\geq
I_{p}$ and $x\leq a^{2n},$ then $\Phi _{p}(x)\geq C/\left\vert \log
x\right\vert $ for a suitable constant $C$. Consequently, as $s_{n}\geq
a^{2n}$, we will have $\phi _{p}(n)\geq cn\Phi _{p}(a^{2n})\geq 2$ for all $%
n\geq I_{p}$, (whatever the choice of $E,$ as long as the ratios lie between 
$a^{2}$ and $b$). Thus with $A=2/c$ and $B=c,$%
\begin{equation}
Bn\Phi _{p}(s_{n})\leq \phi _{p}(n)\leq An\Phi _{p}(s_{n})\text{ for all }%
n\geq I_{p}.  \label{4}
\end{equation}

As $\Phi _{p}$ decreases to $0$, there is also an index $J_{p}\in \mathbb{N}$
such that 
\begin{equation}
\Phi _{p}(b^{n})\leq 1/(8A)\text{ for all }n\geq J_{p}.  \label{5}
\end{equation}

As in the proof of Theorem \ref{Diff}, we will pick a sparse sequence $%
\{n_{j}\}$ and assign ratios $a^{2}$ except on the levels $%
\{n_{j}+1,\ldots,n_{j}+\phi _{r_{j}}(n_{j})\}$ where the ratios will be $%
f(r_{j}).$ Each $p$ must occur as an $r_{j}$ infinitely often so that we
will have $\overline{\dim }_{\Phi _{p}}E\geq \log 2/\left\vert \log
f(p)\right\vert .$ The numbers $n_{j}$ will need to be sufficiently sparse
so that if $q>p,$ this length of levels (where the ratio exceeds $f(q)$) is
too short to influence the $\overline{\dim }_{\Phi _{q}}E$ calculation.

\smallskip

\textbf{Construction of the Cantor set:}

To begin, we list $(0,1)\cap \mathbb{Q}$ as $\{r_{i}\}_{i=1}^{\infty }$
where each rational number is repeated infinitely often in $\{r_{i}\}$. To
start the construction of $E,$ pick $n_{1}\geq \max (I_{r_{1}},8J_{r_{1}})$.
We will set the ratios of dissection to be $a^{2}$ on the levels $%
\{1,\ldots,n_{1}\}$. Choose the minimal integer $\ell _{1}$ such that $%
f(r_{1})^{\ell _{1}}\leq s_{n_{1}}^{\Phi _{r_{1}}(s_{n_{1}})}$ and put $%
m_{1}=4(n_{1}+\ell _{1})$. Set the ratios equal to $f(r_{1})$ on the levels $%
\{n_{1}+1,\ldots,n_{1}+\ell _{1}\}$ and $a^{2}$ on the levels $\{n_{1}+\ell
_{1}+1,\ldots,m_{1}\}$.

Notice that $\ell _{1}=\phi _{r_{1}}(n_{1})$ and the choice of $n_{1}$
ensures that%
\begin{equation*}
\ell _{1}\leq An_{1}\Phi _{r_{1}}(s_{n_{1}})\leq An_{1}\Phi
_{r_{1}}(b^{n_{1}})\leq n_{1}/8
\end{equation*}%
by (\ref{4}) and (\ref{5}).

We proceed inductively and suppose we have chosen $n_{i},\ell _{i},m_{i}$
for $i=1,\ldots,j-1$, (with the properties described below) and have
specified that the ratios of dissection on levels $\{1,\ldots,m_{j-1}\}$
should be $a^{2}$ except on the levels $\{n_{i}+1,\ldots,n_{i}+\ell _{i}\},$
for $i=1,..,j-1,$ when they will be $f(r_{i}).$

Now pick $n_{j}$ large enough to satisfy the following conditions:

(i) $n_{j}\geq 8\max (I_{r_{j},}J_{r_{j}},m_{j-1}\}$ and

(ii) if $i<j$ and $r_{i}>r_{j},$ then 
\begin{equation*}
\Phi _{r_{i}}(s_{m_{j-1}}a^{2(n_{j}-m_{j-1})})\geq \frac{8A}{B}\Phi
_{r_{j}}\left( s_{m_{j-1}}a^{2(n_{j}-m_{j-1})}\right) ,
\end{equation*}%
which can be done since $\Phi _{r_{i}}(x)/\Phi _{r_{j}}(x)\rightarrow \infty 
$ as $x\rightarrow 0$.

We will assign ratio $a^{2}$ on levels $\{m_{j-1}+1,\ldots,n_{j}\}$, so $%
s_{m_{j-1}}a^{2(n_{j}-m_{j-1})}=s_{n_{j}}$ and that means (ii) actually says%
\begin{equation}
\Phi _{r_{i}}(s_{n_{j}})\geq \frac{8A}{B}\Phi _{r_{j}}\left(
s_{n_{j}}\right) \text{ whenever }i<j\text{ and }r_{i}>r_{j}.
\label{property2}
\end{equation}%
Choose the minimal integer $\ell _{j}$ such that $f(r_{j})^{\ell _{j}}\leq
S_{n_{j}}^{\Phi _{r_{j}}(n_{j})},$ put $m_{j}=4(n_{j}+\ell _{j})$ and assign
the ratios on levels $\{n_{j}+1,$ $\ldots,n_{j}+\ell _{j}\}$ to be $f(r_{j})$
and the ratios on the levels $\{n_{j}+\ell _{j}+1,..,m_{j}\}$ to be $a^{2}.$

Note that $\ell _{j}=\phi _{r_{j}}(n_{j})$ and property (i) in the
definition of $n_{j},$ together with (\ref{4}) and (\ref{5}), ensures $\ell
_{j}\leq n_{j}/8$. In particular, $n_{j}-\ell _{j}\geq 7/8n_{j}\geq 7m_{j-1}$
and $n_{j}+\ell _{j}=m_{j}/4$.

This completes the construction of $E$.

\smallskip \medskip

\textbf{Verification of the }$\Phi $\textbf{-dimensions:}

We now need to verify that we obtain the desired value for each $\overline{%
\dim }_{\Phi _{q}}E$. We can easily see that $\overline{\dim }_{\Phi
_{q}}E\geq \log 2/\left\vert \log f(q)\right\vert $ by noting that 
\begin{equation*}
\left( \frac{s_{n_{j}+\phi _{r_{j}}(n_{j})}}{s_{n_{j}}}\right) ^{1/\phi
_{r_{j}}(n_{j})}=f(r_{j})
\end{equation*}%
for the infinitely many choices of $r_{j}=q$. So we only need to prove the
other inequality.

Assume the first occurrence of $q$ in $\{r_{i}\}$ is with $i=j_{0}$. It will
be enough to show that $\left( s_{k+m}/s_{k}\right) ^{1/m}\leq f(q)$
whenever $k\geq n_{j_{0}}$ and $m\geq \phi _{q}(k)$. In other words, we want
to prove that the geometric mean of the ratios $r_{k+1},\ldots,r_{k+m}$ is
at most $f(q)$ for all $m\geq \phi _{q}(k)$ and $k\geq n_{j_{0}}$. A key
point to observe is that the geometric mean of any collection of ratios
where there are at least as many ratios equal to $a^{2}$ as otherwise, is at
most $a\leq f(q)$ for any $q$.

Given $k\geq N_{j_{0}},$ choose $j\geq j_{0}$ such that $k\in
\{m_{j-1}+1,\ldots,m_{j}\}:=B_{j}$. If either $k\leq n_{j}-\ell _{j}$ or $%
k>n_{j}+\ell _{j},$ then this is the situation with respect to the ratios $%
r_{k+1},\ldots,r_{k+m}$ (regardless of the size of $m$), so the geometric
mean is suitably small.

Thus we can assume $k\in \{n_{j}-\ell _{j}+1,n_{j}+\ell _{j}\}$. If $%
r_{j}\geq q=r_{j_{0}},$ then $f(r_{j})\leq f(q)$ and hence all ratios from $%
B_{j}$ are at most $f(q)$. In this case it is clear that the geometric mean
of the collection $r_{k+1},\ldots,r_{J},$ where $J=\min (k+m,m_{j}),$ is at
most $f(q)$. If $k+m>m_{j},$ then the set of ratios $\{r_{m_{j}+1},%
\ldots,r_{k+m}\}$ contains more ratios equal to $a^{2}$ than otherwise, so
its geometric mean is even at most $a$ and thus the geometric mean of the
full collection $\{r_{k+1},\ldots,r_{k+m}\}$ is at most $f(q)$.

The last case to consider is that for this choice of $j$ (which we remind
the reader is $\geq j_{0})$, we have $r_{j}<q=r_{j_{0}}$ and therefore $%
f(r_{j})>f(q)$. From (\ref{property2}), we note that 
\begin{equation*}
\Phi _{q}(s_{n_{j}})=\Phi _{r_{j_{0}}}(s_{n_{j}})\geq \frac{8A}{B}\Phi
_{r_{j}}(s_{n_{j}})\text{.}
\end{equation*}

The remaining arguments are now similar to the proof of Theorem \ref{Diff}.
Recall that $k\in \{n_{j}-\ell _{j}+1,n_{j}+\ell _{j}\}$. If $n_{j}-\ell
_{j}<k\leq n_{j}$, then $k\geq I_{r_{j}},$ so 
\begin{eqnarray}
\phi _{q}(k) &\geq &Bk\Phi _{q}(s_{k})\geq B(n_{j}-\ell _{j})\Phi
_{q}(s_{n_{j}})  \notag \\
&\geq &\frac{7}{8}Bn_{j}\Phi _{q}(s_{n_{j}})\geq 7An_{j}\Phi
_{r_{j}}(s_{n_{j}})\geq 7\phi _{r_{j}}(n_{j})=7\ell _{j}\text{.}  \label{6}
\end{eqnarray}%
The fact that $n_{j}\geq 8J_{q}$ also guarantees that $\phi _{q}(k)\leq
Ak\Phi _{q}(s_{k})\leq k/8,$ so $k+\phi _{q}(k)<m_{j}/2$. Thus the
collection $\{r_{k+1},\ldots,r_{k+\phi _{q}(k)}\}$ contains at most $\ell
_{j}$ terms of ratio $f(r_{j})$ and at least $6\ell _{j}$ terms of ratio $%
a^{2},$ and therefore has geometric mean at most $a$.

If, instead, $n_{j}<k\leq n_{j}+\ell _{j},$ then, as in the proof of Theorem %
\ref{Diff} (see particularly (\ref{Decrphi})), 
\begin{equation*}
k+\phi _{q}(k)-(n_{j}+\ell _{j})\geq \phi _{q}(n_{j})-\phi
_{r_{j}}(n_{j})\geq 6\ell _{j}
\end{equation*}%
where the final inequality comes from applying (\ref{6}) with $k=n_{j}$.
Again, 
\begin{equation*}
k+\phi _{q}(k)\leq 9k/8<m_{j}/2
\end{equation*}%
and thus again we deduce that the geometric mean of $\{r_{k+1},\ldots,r_{k+%
\phi _{q}(k)}\}$ is at most $a$.

For either choice of $k$, if $m>\phi _{q}(k),$ then since $n_{j}+\ell _{j}$ $%
<k+\phi _{q}(k)\leq m_{j}/2$ the collection of ratios $\{r_{k+\phi
_{q}(k)+1},\ldots,r_{k+m}\}$ has more that are value $a^{2}$ than otherwise,
and hence has geometric mean at most $a,$ as well. Thus we conclude $\left(
s_{k}/s_{k+m}\right) ^{1/m}\leq a$ in this (final) case.

This completes the proof.
\end{proof}

\begin{corollary}
\label{Cor:continuum}Given $0<\alpha <\beta <1,$ there is a set $E\subseteq 
\mathbb{[}0,1]$ such that 
\begin{equation*}
\{\overline{\dim }_{\Phi }E:\Phi \in \mathcal{D}\text{, }\lim_{x\rightarrow
0}\Phi (x)=0\}=[\alpha ,\beta ]=[\dim _{qA}E,\dim _{A}E].
\end{equation*}
\end{corollary}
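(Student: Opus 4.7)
The plan is to invoke Theorem~\ref{continuum} with the family $\Phi_p(x)=|\log x|^{p-1}$, $p\in(0,1)$, which satisfies its three hypotheses (as noted in the remark following that theorem), together with a strictly decreasing continuous function $d:(0,1)\to(\alpha,\beta)$ with $\lim_{p\to 0^+}d(p)=\beta$ and $\lim_{p\to 1^-}d(p)=\alpha$; for concreteness I would take $d(p)=\alpha+(\beta-\alpha)(1-p)$. Theorem~\ref{continuum} then produces a central Cantor set $E_1$ with $\overline{\dim}_{\Phi_p}E_1=d(p)$ for every $p$. Because the background dissection ratio in that construction is $a^2$ rather than $a$, a direct analysis of $E_1$ alone only shows $\dim_{qA}E_1\in[\alpha/2,\alpha]$ and it is delicate to pin down the exact value; to obtain $\dim_{qA}=\alpha$ cleanly I would therefore augment $E_1$ by taking a disjoint union with a standard self-similar Cantor set $E_2\subseteq\mathbb{R}$ of Hausdorff (hence Assouad) dimension $\alpha$, whose upper Assouad-type dimensions all equal $\alpha$, and set $E=E_1\cup E_2$.

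The dimensional computations then proceed as follows. Because all ratios used for $E_1$ lie in $[a^2,b]$ with $b=2^{-1/\beta}$, one gets $\dim_A E_1\le \log 2/|\log b|=\beta$; the reverse inequality comes from $\dim_A E_1\ge\overline{\dim}_{\Phi_p}E_1=d(p)\to\beta$ as $p\to 0^+$. Hence $\dim_A E=\max(\beta,\alpha)=\beta$. For the quasi-Assouad dimension, $\dim_{qA}E_1\le\overline{\dim}_{\Phi_p}E_1\to\alpha$ gives $\dim_{qA}E_1\le\alpha$, and together with $\dim_{qA}E_2=\alpha$ this yields $\dim_{qA}E=\max(\dim_{qA}E_1,\alpha)=\alpha$. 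By Proposition~\ref{union}(i) and the analogous finite stability of the Assouad and quasi-Assouad dimensions, $\overline{\dim}_\Phi E=\max(\overline{\dim}_\Phi E_1,\overline{\dim}_\Phi E_2)=\max(\overline{\dim}_\Phi E_1,\alpha)$ for every $\Phi\in\mathcal{D}$.

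With these facts in hand, the set $\{\overline{\dim}_\Phi E:\Phi\in\mathcal{D},\ \lim_{x\to 0}\Phi(x)=0\}$ is contained in $[\dim_{qA}E,\dim_A E]=[\alpha,\beta]$. For the reverse containment I would note that $\beta=\overline{\dim}_0 E$ (the zero function trivially tends to $0$); $\alpha=\overline{\dim}_\Psi E$ for the dimension function $\Psi$ supplied by Proposition~\ref{qA} applied to $E$; and every $v\in(\alpha,\beta)$ is realized as $\overline{\dim}_{\Phi_p}E=\max(d(p),\alpha)=d(p)=v$ for the unique $p=d^{-1}(v)$, since $d(p)>\alpha$. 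The set is therefore exactly $[\alpha,\beta]=[\dim_{qA}E,\dim_A E]$. The main obstacle is the quasi-Assouad lower bound; the auxiliary set $E_2$ is inserted precisely to avoid a more delicate analysis of how the sparse high-ratio blocks in the construction of $E_1$ interact with the bulk $a^2$ stretches at quasi-Assouad scale.
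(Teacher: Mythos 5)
Your proof is correct, but it takes a genuinely different route than the paper's. The paper also applies Theorem~\ref{continuum} with $\Phi_p(x)=|\log x|^{p-1}$, and it also observes that the resulting set $E_1$ has $\dim_A E_1 = \beta$ but $\dim_{qA}E_1$ too small (the background ratio being $a^2$ rather than $a$ forces $\dim_{qA}E_1=\alpha/2$). To correct this the paper \emph{modifies the construction of $E_1$ itself}: it inserts, on a sparse sequence of levels $k_j+1,\ldots,2k_j$ (with $k_j\gg m_j$ and $n_{j+1}\gg 2k_j$), blocks where the ratio equals $a=2^{-1/\alpha}$; considering $s_{k_j}/s_{2k_j}$ then pins $\dim_{qA}E=\alpha$, while the sparseness of $\{k_j\}$ leaves the other dimensions unchanged. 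Your approach instead leaves $E_1$ untouched and takes a disjoint union with a self-similar Ahlfors-regular Cantor set $E_2$ of dimension $\alpha$, then invokes finite stability (Proposition~\ref{union}(i) for the upper $\Phi$-dimensions, together with the analogous stability of $\dim_A$ and $\dim_{qA}$) to get $\overline{\dim}_\Phi E = \max(\overline{\dim}_\Phi E_1,\alpha)$. This is a cleaner, more modular argument: the external gadget $E_2$ ``absorbs'' the delicate quasi-Assouad lower-bound analysis into a known fact about regular Cantor sets, at the modest cost of introducing an auxiliary set and needing the union formula. The paper's version keeps $E$ a single central Cantor set (hence a complementary set of a fixed gap sequence, which matters elsewhere in their theory), but requires re-verifying that the new $a$-blocks do not perturb the $\Phi_p$-dimensions. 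Your identification of the full interval $[\alpha,\beta]$ is handled the same way the paper does, via $\Phi=0$ at one end and Proposition~\ref{qA} at the other, with the open interval filled in by the $\Phi_p$.
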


\begin{proof}
Let $D(E)=\{\overline{\dim }_{\Phi }E:\Phi \in \mathcal{D}$, $%
\lim_{x\rightarrow 0}\Phi (x)=0\}$. We will let $\Phi _{p}(x)=|\log x|^{p-1}$
for $p\in (0,1)$. Clearly, 
\begin{equation*}
\{\overline{\dim }_{\Phi _{p}}E:\text{ }p\in (0,1)\}\subseteq D(E)\subseteq
\lbrack \dim _{qA}E,\dim _{A}E],
\end{equation*}%
so it will be sufficient to construct a set $E$ with $\{\overline{\dim }%
_{\Phi _{p}}E:p\in (0,1)\}=(\alpha ,\beta )$ and $\dim _{qA}E=\alpha ,$ $%
\dim _{A}E=\beta $. The previous theorem would permit us to construct such a
set satisfying the first property and would also have $\dim _{A}E=\beta $.
However, its quasi-Assouad dimension is $a^{2},$ so we need to modify the
construction slightly.

We can do this by requiring the sequence $\{n_{j}\}$ to grow so rapidly that
in addition to the requirements from before, we can also have $k_{j}$ much
greater than $m_{j}$ and $n_{j+1}$ much greater than $2k_{j}$. On the levels 
$k_{j}+1,\ldots,2k_{j}$ we will set the ratios to equal to $a=2^{-1/\alpha }$
(rather than $a^{2}$). One can see that $\dim _{qA}E=\log 2/|\log a|=\alpha $
by considering the terms $s_{k_{j}}/s_{2k_{j}}$. The sparseness of the $%
\{k_{j}\}$ will ensure that the other dimensions are not affected by this
change. We leave the technical details to the reader.
\end{proof}

\section{$\Phi $-dimensions of complementary sets in $\mathbb{R}$\label%
{Rearrangements}}

\label{sec:complementarysets}

\subsection{Bounds for $\Phi $-dimensions of complementary sets}

\label{subsec:complsets}

\subsubsection{Complementary sets}

Every closed subset of the interval $[0,1]$ of Lebesgue measure zero is of
the form $E=[0,1]\diagdown \bigcup_{j=1}^{\infty }U_{j}$ where $\{U_{j}\}$
is a disjoint family of open subintervals of $[0,1]$ whose lengths sum to
one. We will let $a=\{a_{j}\}_{j=1}^{\infty }$ where $a_{j}$ is the length
of $U_{j}$. Of course, $\sum_{j}a_{j}=1$ and without loss of generality we
can assume $a_{j+1}\leq a_{j}$. We will denote by $\mathcal{C}_{a}$ the
collection of all such closed sets $E$. These are called the \textbf{%
complementary sets of }$a$.

One example of a complementary set is the Cantor set associated with $a,$
denoted $C_{a}$. Another is the countable set, $D_{a},$ called the
decreasing rearrangement, defined as%
\begin{equation*}
D_{a}=\left\{ \sum_{i\geq k}a_{i}:k=1,2,\ldots\right\}
=\{1,1-a_{1},1-a_{1}-a_{2},\ldots\}.
\end{equation*}

As is well known, all complementary sets of a given sequence $a$ have the
same upper and lower box dimensions \cite[Section 3.2]{Fal}, but, of course,
this need not be true for other dimensions. For instance, the Hausdorff
dimension of the decreasing rearrangement is $0$, but this need not be true
for the Cantor set. In \cite{BT}, Besicovitch and Taylor proved that the
Cantor set $C_{a}$ had the maximum Hausdorff dimension of any set in $%
\mathcal{C}_{a}$. Further, they showed given any $s\leq \dim _{H}C_{a}$
there is some set $E\in \mathcal{C}_{a}$ with $\dim _{H}E=s$. The same
result was shown to be true with the Hausdorff dimension replaced by the
packing dimension in \cite{HMZ}. In \cite{GHM}, it was shown that the Cantor
set and the decreasing set also have the extremal Assouad dimensions (under
natural assumptions on the gap sequence $a$). But unlike the situation for
Hausdorff, packing and lower Assouad dimensions, $\dim _{A}C_{a}$ is minimal
among the sets in $\mathcal{C}_{a}$ and $\dim _{A}D_{a}$ is maximal (and
equals $1$ for such $a$). Again, it was shown that the full range of
possible dimensions is attained, namely $\{\dim _{L}E:E\in \mathcal{C}%
_{a}\}= $ \thinspace $\lbrack 0,\dim _{L}C]$ and $\{\dim _{A}E:E\in \mathcal{%
C}_{a}\}=[\dim _{A}C,1]$.

In this section, we will prove analogous results for the $\Phi $-dimensions,
although some proofs are necessarily quite different.

\subsubsection{Decreasing rearrangement}

We first prove that the decreasing rearrangement is always one of the
extreme values of the $\Phi $-dimension over the class $\mathcal{C}_{a}$.
This requires a proof in the case of the upper $\Phi $-dimension as this
dimension need not be one, c.f., Example \ref{Ex:Dec}. To begin, we first
point out the following elementary result which essentially can be found in 
\cite{Fal}.

\begin{lemma}
\label{box}Suppose $F,G$ are two compact sets in $\mathcal{C}_{a}$ for some
decreasing, summable sequence $a=(a_{n})$. For any $r>0$, 
\begin{equation*}
\frac{1}{16}\leq \frac{N_{r}(F)}{N_{r}(G)}\leq 16.
\end{equation*}
\end{lemma}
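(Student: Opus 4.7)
The plan is to show that for every $E \in \mathcal{C}_a$ and $r>0$, the covering number $N_r(E)$ is squeezed between two quantities depending only on the gap sequence $a$ and the scale $r$, which then forces $N_r(F)$ and $N_r(G)$ to be comparable. Introduce the auxiliary functions $K(r) = \#\{j : a_j > r\}$ (the number of ``big'' gaps) and $L(r) = \sum_{a_j \leq r} a_j$ (the total length of ``small'' gaps); both depend only on $a$ and $r$.

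For the upper bound, remove the $K(r)$ gaps longer than $r$ from $[0,1]$ to leave at most $K(r)+1$ closed intervals $R_i$ with $\sum_i |R_i| = L(r)$. Since $E \subseteq \bigcup_i R_i$, covering each $R_i$ by $\lceil |R_i|/(2r) \rceil$ balls of radius $r$ yields
\[
N_r(E) \leq K(r) + \frac{L(r)}{2r} + 1.
\]

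For the lower bound, I would combine two separate counts. Listing the left endpoints of the big gaps in order as $u_{j_1} < u_{j_2} < \cdots$, the identity $u_{j_{k+1}} \geq u_{j_k} + a_{j_k} > u_{j_k} + r$ shows that taking every other endpoint produces $\geq K(r)/2$ points of $E$ pairwise separated by more than $2r$, forcing $N_r(E) \geq K(r)/2$. Separately, inside each $R_i$ every gap has length $\leq r$, so $E \cap R_i$ is $(r/2)$-dense in $R_i$; hence any cover of $E \cap R_i$ by radius-$r$ balls extends to a cover of $R_i$ by radius-$(3r/2)$ balls, which gives $N_r(E \cap R_i) \geq |R_i|/(3r)$. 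Since a radius-$r$ ball can span a big gap whose length only slightly exceeds $r$, it may straddle at most two adjacent $R_i$, and summing produces $N_r(E) \geq L(r)/(6r)$. Together with the trivial $N_r(E) \geq 1$, we obtain
\[
N_r(E) \geq \max\Bigl\{\tfrac{K(r)}{2},\ \tfrac{L(r)}{6r},\ 1\Bigr\}.
\]

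A short case analysis, splitting on which term dominates the upper bound and which dominates the lower bound, then shows that the ratio of the two universal bounds never exceeds $16$. Since the functions $K(r)$ and $L(r)$ depend only on $a$, applying the two-sided estimate to both $F$ and $G$ yields $1/16 \leq N_r(F)/N_r(G) \leq 16$. The main obstacle is the lower bound step above: the possibility that a radius-$r$ ball can cross a gap of length in $(r,2r]$ costs an extra factor of $1/2$ in the density argument, and keeping track of the resulting constants through the case analysis is precisely what dictates the explicit value $16$ stated in the lemma.
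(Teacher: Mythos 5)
The paper does not actually prove this lemma; it simply cites Falconer's book. Your proof is correct and is essentially the standard argument used there: you bound $N_r(E)$ above and below by quantities $U=K(r)+L(r)/(2r)+1$ and $V=\max\{K(r)/2,\,L(r)/(6r),\,1\}$ that depend only on the gap sequence $a$ and on $r$, not on the particular rearrangement $E\in\mathcal{C}_a$, which immediately yields the two-sided comparison. All three ingredients check out: the upper bound (with at most $K(r)+1$ residual components contributing $\lceil|R_i|/(2r)\rceil$ balls each, plus possibly one for degenerate components, hence the additive $K(r)+1$); the $K(r)/2$ lower bound from alternating left endpoints of big gaps, which are pairwise $>2r$ apart; and the $L(r)/(6r)$ lower bound from $(r/2)$-density of $E\cap R_i$ in $R_i$ together with the fact that a radius-$r$ ball meets at most two of the $R_i$. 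The ``short case analysis'' you defer can be done uniformly: since $U\le 3\max\{K(r),L(r)/(2r),1\}$ and $V\ge\frac{1}{3}\max\{K(r),L(r)/(2r),1\}$, one in fact gets $U/V\le 9$, which is comfortably within the stated constant $16$. The only cosmetic slip is that for a degenerate component $R_i$ (a single point) the count $\lceil|R_i|/(2r)\rceil=0$ undercounts by one, but this is absorbed by the $+1$ in your upper bound since there are at most $K(r)+1$ components in total.
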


\begin{proposition}
\label{Dec}If $a$ is any decreasing, summable sequence, then $\overline{\dim 
}_{\Phi }E\leq \overline{\dim }_{\Phi }D_{a}$ and \underline{$\dim $}$_{\Phi
}E\geq $ $\underline{\dim }_{\Phi }D_{a}=0$ for all $E\in \mathcal{C}_{a}$.
\end{proposition}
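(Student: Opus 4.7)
The statement about lower dimensions is immediate. Every point $s_k = \sum_{i \ge k} a_i$ of $D_a$ is isolated (the $s_k$ are strictly decreasing and accumulate only at $0$), so for any such $z = s_k$ and sufficiently small $R$ one has $B(z,R) \cap D_a = \{z\}$ and hence $N_r(B(z,R) \cap D_a) = 1$. Taking the infimum over $z \in D_a$ in the definition of the lower $\Phi$-dimension then forces $\underline{\dim}_\Phi D_a = 0$, as was already observed earlier in the paper. Since $\Phi$-dimensions are always non-negative, $\underline{\dim}_\Phi E \ge 0 = \underline{\dim}_\Phi D_a$ is automatic for every $E \in \mathcal{C}_a$.

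The upper-dimension inequality $\overline{\dim}_\Phi E \le \overline{\dim}_\Phi D_a$ is the substantive content. My strategy is to establish a localized version of Lemma~\ref{box}: there exist absolute constants $C_1,C_2 \ge 1$, independent of $E \in \mathcal{C}_a$, $z \in E$, $R$ and $r$, such that
\[
 N_r(B(z,R) \cap E) \;\le\; C_1 \cdot N_r\bigl(B(0,C_2 R) \cap \overline{D_a}\bigr),
\]
where $0$ denotes the accumulation point of $D_a$. Granted this, applying the definition of $\alpha := \overline{\dim}_\Phi D_a$ at the scale $C_2 R$ gives $N_r(B(0,C_2R) \cap \overline{D_a}) \le c (C_2 R/r)^{\alpha+\epsilon}$ on the regime $r \le R^{1+\Phi(R)}$, and absorbing $C_2^{\alpha+\epsilon}$ into the multiplicative constant yields $\overline{\dim}_\Phi E \le \alpha + \epsilon$; letting $\epsilon \downarrow 0$ finishes the argument.

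To prove the localized comparison, the first step is structural: the gaps of $E$ contained in $B(z,R)$ form a subsequence $b = (b_1 \ge b_2 \ge \cdots)$ of $a$ with $\sum_i b_i \le 2R$, and by the monotonicity of $a$ one has $b_i \le a_i$ termwise. Combining this with the elementary bound $N_r(F) \le R/r + K_{2r}(F) + 1$ (where $K_{2r}$ counts gaps of length at least $2r$) for $F = B(z,R) \cap E$, and a matching lower estimate $N_r(B(0,C_2R) \cap \overline{D_a}) \gtrsim R/r + K_{2r}(B(0,C_2 R) \cap D_a)$ arising from the density of $D_a$ near $0$, reduces the problem to the combinatorial statement that the number of large gaps in $F$ is dominated, up to a universal constant, by the number of large gaps of $D_a$ lying in $B(0,C_2 R)$.

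The main obstacle is precisely this combinatorial step. The gaps of $F$ need not form an initial segment of $a$, so the termwise bound $b_i \le a_i$ alone is insufficient; one must exploit the summability constraint $\sum b_i \le 2R$ together with the decreasing nature of $a$ to choose $C_2$ large enough that the threshold index $k_0$ defined by $s_{k_0} \le C_2 R < s_{k_0-1}$ satisfies $k_0 \le m_r - K_{2r}(F) + 1$, where $m_r = \max\{i: a_i \ge 2r\}$. Once this comparison of indices is established the localized inequality follows, and with it the proposition.
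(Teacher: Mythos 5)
Your lower-dimension argument is correct and matches the paper. For the upper dimension, your overall strategy --- a direct localized comparison of $N_r(B(z,R)\cap E)$ with $N_r(B(0,C_2R)\cap D_a)$, split into a ``dense'' part of order $R/r$ and a count of large gaps --- is in the same spirit as the paper's. But the plan as written has a genuine gap, and in fact one of the two ingredients you assert is false. First, the claimed lower bound $N_r(B(0,C_2R)\cap D_a)\gtrsim R/r$ does not hold in general: take a sequence with $a_1$ close to $1$ and the tail summing to, say, $10^{-2}$. If $R$ is moderate (so $a_1 > C_2 R > \sum_{i\ge 2}a_i$), then $D_a\cap[0,C_2R]$ has diameter $s_{k_0}=\sum_{i\ge 2}a_i\ll R$, hence covering number $\ll R/r$. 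The proposition survives because the diameter $R'$ of $F$ is then also forced to be $\le s_{k_0}$ (the huge gap $a_1$ cannot fit in $B(z,R)$), so the right quantity to compare against is $R'/r$, not $R/r$. The paper sidesteps this by working on $[0,R']$ throughout.

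Second, the combinatorial index inequality $k_0\le m_r-K_{2r}(F)+1$ is not achievable ``by choosing $C_2$ large enough.'' Writing $s=K_{2r}(F)$ and using $j_i\le m_r-s+i$ for the indices of the large gaps of $F$, one only gets $\sum_{i\ge m_r-s+1}a_i\le 2R+\sum_{i>m_r}a_i$; the tail $\sum_{i>m_r}a_i$ depends on $r$ and $a$ alone and can exceed any fixed multiple of $R$, so $k_0\le m_r-s+1$ can fail outright, with $K_{2r}(D_a\cap B(0,C_2R))=0$ while $K_{2r}(F)>0$. In that regime you must instead fall back on the dense-part estimate (which then does dominate, since the small gaps of $D_a$ near $0$ fill an interval of length $>C_2R-2r$). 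So a case split, essentially the paper's dichotomy $n\le m$ versus $n>m$, is unavoidable, and your proposal states the key lemma in a form that is wrong in one of the two cases and only gestures at a proof in the other. You would also do well to consider the paper's intermediate step of replacing $F$ by the decreasing rearrangement $E'$ of its own gaps via Lemma~\ref{box}: it cleanly separates the ``arrangement'' issue (handled by the $16$-to-$1$ covering comparison) from the purely combinatorial comparison of two decreasing rearrangements on $[0,R']$, which is where the index bound $s\le n-m$ is actually proved.
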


\begin{proof}
As $D_{a}$ has isolated points, $\underline{\dim }_{\Phi }D_{a}=0$ for all
dimensions functions $\Phi $ and hence is the minimal lower $\Phi $%
-dimension.

To prove that $\overline{\dim }_{\Phi }D_{a}$ is the maximal upper $\Phi $%
-dimension we will simply show that 
\begin{equation*}
N_{r}(E\cap B(z,R))\leq 64N_{r}(D_{a}\cap B(0,R))
\end{equation*}%
for all $z\in E$ and $r\leq R$.

To see this, let $\{a_{j_{i}}\}$ be the set of gap lengths that are
completely contained in $E\cap B(z,R)$ and let $R^{\prime }=\sum a_{j_{i}} $%
. Then $N_{r}(E\cap B(z,R))=N_{r}(E\cap I)$ for a suitable interval $I$ $%
\subseteq B(z,R)$ of length $R^{\prime }\leq 2R$. Let $E^{\prime }$ denote
the set formed by removing from $[0,R^{\prime }]$ the gaps of lengths $%
\{a_{j_{i}}\}$ in decreasing order (from right to left). By Lemma \ref{box}, 
$N_{r}(E\cap I)$ $\leq 16N_{r}(E^{\prime }\cap [0,R^{\prime }])$.

Choose $n$ such that $a_{n}\leq 2r<a_{n-1}$ and suppose that 
\begin{equation*}
\sum_{i=m+1}^{\infty }a_{i}<R^{\prime }\leq \sum_{i=m}^{\infty }a_{i}
\end{equation*}%
(we will say that $R^{\prime }$ belongs to gap $a_{m}$ in the set $D_{a}$).
If $n\leq m,$ then all gaps in the construction of $D_{a}$ intersecting $%
[0,R^{\prime }]$ have length at most $2r$. Thus $N_{r}(D_{a}\cap
[0,R^{\prime }])=\lceil \frac{R^{\prime }}{2r}\rceil $ is the maximum
possible value and hence it dominates $N_{r}(E^{\prime }\cap [0,R^{\prime
}]) $.

So assume $n>m$ and let $A=\sum_{i=n}^{\infty }a_{i}<R^{\prime }$. As $%
a_{j}\leq 2r$ for $j\geq n$ and $a_{j}>2r$ for $j\leq n-1$, 
\begin{eqnarray*}
N_{r}(D_{a}\cap [0,R^{\prime }]) &\geq &N_{r}(D_{a}\cap
[0,A])+N_{r}(D_{a}\cap [A+a_{n-1},R^{\prime }]) \\
&\geq &\left\lceil \frac{A}{2r}\right\rceil +\max (n-m-2,0)
\end{eqnarray*}%
(where $[A+a_{n-1},R^{\prime }]$ is empty if $R^{\prime }<A+a_{n-1}$).

Assume that the number $A$ belongs to gap $a_{j_{s}}$ in the set $E^{\prime
},$ so%
\begin{equation*}
N_{r}(E^{\prime }\cap [0,R^{\prime }])\leq \left\lceil \frac{A}{2r}%
\right\rceil +s.
\end{equation*}%
Since $\sum_{i=n}^{\infty }a_{i}=A\geq \sum_{i=s+1}^{\infty }a_{j_{i}}$, it
follows that $j_{s}\leq n-1$ and consequently, $a_{j_{s-k}}\geq a_{n-1-k}$
for all $k=0,\ldots,s-1$. Thus if $n-s+1\leq m,$ then%
\begin{eqnarray*}
R^{\prime }-A &>&a_{j_{s-1}}+a_{j_{s-2}}+\cdot \cdot \cdot
+a_{j_{2}}+a_{j_{1}}\geq a_{n-2}+a_{n-3}+\cdot \cdot \cdot +a_{n-s+1}+a_{n-s}
\\
&\geq &a_{n-2}+a_{n-3}+\cdot \cdot \cdot +a_{n-s+1}+a_{n-1}\geq R^{\prime
}-A.
\end{eqnarray*}%
This contradiction proves $s\leq n-m$. Thus 
\begin{equation*}
N_{r}(D_{a}\cap [0,R^{\prime }])\geq \left\lceil \frac{A}{2r}\right\rceil
+\max (s-2,0),
\end{equation*}%
from which it is easy to check that 
\begin{eqnarray*}
N_{r}(D_{a}\cap [0,R]) &\geq &N_{r}(D_{a}\cap [0,R^{\prime }])\geq \frac{1}{4%
}\left( \left\lceil \frac{A}{2r}\right\rceil +s\right) \\
&\geq &\frac{1}{4}N_{r}(E^{\prime }\cap [0,R^{\prime }])\geq \frac{1}{64}%
N_{r}(E\cap B(z,R)),
\end{eqnarray*}%
and the proposition follows.
\end{proof}

\subsubsection{Cantor Sets}

We now focus our attention on decreasing, summable sequences $a$ with the
property that there are constants $\tau $ and $\lambda $ with%
\begin{equation}
0<\tau \leq s_{j+1}/s_{j}\leq \lambda <1/2.  \label{doubling}
\end{equation}%
Here, as before, $s_{j}=2^{-j}\sum_{i\geq 2^{j}}a_{i}$. We will call a
sequence $\{a_{j}\}$ with this property \textbf{level comparable.} Of
course, the doubling assumption automatically gives the left hand inequality
and central Cantor sets have the level comparable property precisely when
their ratios of dissection are bounded away from $0$ and $1/2$.

The level comparable assumption is very useful as it ensures that $s_{k}\sim
a_{2^{k}}$ since $s_{k}\geq a_{2^{k+1}}\gtrsim a_{2^{k}}$ and 
\begin{equation}
(1-2\lambda )s_{k}\leq s_{k}-2s_{k+1}\leq a_{2^{k}}\text{ .}  \label{ineq3}
\end{equation}%
We remind the reader that the symbols $\sim $ and $\gtrsim $ were defined in
Notation \ref{sim}.

For level comparable sequences, the Cantor set has the other extreme value
for the $\Phi $-dimensions.

\begin{theorem}
\label{Cantorbd}If $a=\{a_{j}\}$ is a level comparable sequence and $\Phi$ is a dimension function, then for
all $E\in \mathcal{C}_{a}$ we have $\overline{\dim }_{\Phi }E\geq \overline{%
\dim }_{\Phi }C_{a}$ and $\underline{\dim }_{\Phi }E\leq \underline{\dim }%
_{\Phi }C_{a}$.
\end{theorem}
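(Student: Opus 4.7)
The plan is to compare covering numbers of any $E \in \mathcal{C}_{a}$ to those of $C_{a}$ at the distinguished scales $s_{k}$ and $s_{k+n}$, and then invoke the characterization of the $\Phi$-dimensions of $C_{a}$ supplied by Theorem \ref{Cantorformula}. First I would record the numerical input that will be used throughout: by Lemma \ref{box} together with the fact that the Cantor construction of $C_{a}$ produces exactly $2^{j}$ closed intervals of length comparable to $s_{j}$ at step $j$ (using level comparability), for every $F \in \mathcal{C}_{a}$ and every $j \geq 1$ one has $N_{s_{j}}(F) \sim 2^{j}$ with constants depending only on $\tau$ and $\lambda$.

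The heart of the argument is a comparison lemma: for every $E \in \mathcal{C}_{a}$ and all $k, n \geq 1$ there exist $z_{1}, z_{2} \in E$ with
\[
N_{s_{k+n}}(B(z_{1}, 2s_{k}) \cap E) \gtrsim 2^{n} \quad \text{and} \quad N_{s_{k+n}}(B(z_{2}, 2s_{k}) \cap E) \lesssim 2^{n},
\]
with implied constants depending only on $a$. To prove it I would pick a maximal $s_{k}$-separated subset $Z = \{z_{1}, \ldots, z_{M}\} \subseteq E$; then $E \subseteq \bigcup_{i} B(z_{i}, s_{k})$ and, because $\mathbb{R}$ is doubling, $M$ is comparable to $N_{s_{k}}(E) \sim 2^{k}$. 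The upper half follows from the pigeonhole estimate
\[
\sum_{i=1}^{M} N_{s_{k+n}}(B(z_{i}, s_{k}) \cap E) \geq N_{s_{k+n}}(E) \sim 2^{k+n},
\]
which forces the maximum term to be at least of order $2^{n}$. The lower half relies on a bounded-overlap estimate $\sum_{i} N_{s_{k+n}}(B(z_{i}, s_{k}) \cap E) \lesssim N_{s_{k+n}}(E)$, whose justification is the main technical obstacle, since the balls $B(z_{i}, s_{k})$ themselves overlap. I would derive it by observing that any single $s_{k+n}$-ball meets only $O(1)$ of the $B(z_{i}, s_{k})$, because the half-size balls $B(z_{i}, s_{k}/2)$ are pairwise disjoint and the ambient space is doubling; averaging then produces a centre $z_{i} \in E$ realising at most $\sim 2^{n}$.

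Finally, I would convert the lemma into the two dimension inequalities. Given any $0 < r \leq R^{1+\Phi(R)} \leq R$ with $R$ small, pick integers $k, n$ with $s_{k+1} < R \leq s_{k}$ and $s_{k+n+1} < r \leq s_{k+n}$; then $R \sim s_{k}$, $r \sim s_{k+n}$, and the monotonicity of $x \mapsto x^{1+\Phi(x)}$ combined with the definition of the depth function $\phi$ associated with $\Phi$ and $a$ forces $n \geq \phi(k) - 1$. Feeding the sup bound of the lemma into the contrapositive of the defining inequality for $\overline{\dim}_{\Phi} E$ and comparing with formula (\ref{Thetadim}) yields $\overline{\dim}_{\Phi} C_{a} \leq \overline{\dim}_{\Phi} E$; feeding the inf bound into the analogous formula (\ref{LowerDim}) yields $\underline{\dim}_{\Phi} E \leq \underline{\dim}_{\Phi} C_{a}$, completing the plan.
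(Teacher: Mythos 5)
Your argument is correct, and it is genuinely different from the paper's proof of Theorem \ref{Cantorbd}. The paper proves the upper inequality by decomposing $[0,1]$ into the step-$k$ intervals $J_1,\ldots,J_{M_k}$ (of \emph{varying} lengths), counting step-$(k+n)$ gaps inside each, and then splitting into ``short'' and ``long'' intervals: if no short interval works, H\"older's inequality locates a long interval $J_i$ with $N_r(B(x_i,|J_i|)\cap E)\gtrsim 2^{n\varepsilon}(|J_i|/r)^{d-2\varepsilon}$, and only then does one check that $|J_i|^{1+\Phi(|J_i|)}\geq r$. (The lower inequality is deferred to a modification of \cite[Thm.~4.1]{GHM}.) Your route instead works at the \emph{uniform} scale pair $(r,R)=(s_{k+n},s_k)$: Lemma \ref{box} gives $N_{s_j}(E)\sim 2^j$ for every $E\in\mathcal{C}_a$, a maximal $s_k$-separated set $Z\subseteq E$ with $|Z|\sim 2^k$ covers $E$, pigeonhole produces a ball with covering number $\gtrsim 2^n$, and a bounded-overlap count (each $s_{k+n}$-ball meets $O(1)$ of the $B(z_i,s_k)$ since the $z_i$ are $s_k$-separated in a doubling space) produces a ball with covering number $\lesssim 2^n$. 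Feeding these two extremal centres into the formula of Theorem \ref{Cantorformula}, with $R=s_k$, $r=s_{k+n}$ and $n\geq\phi(k)$ guaranteeing $r\leq R^{1+\Phi(R)}$, gives both inequalities at once. This is shorter and more symmetric than the paper's argument, handles upper and lower in one pass, and avoids the H\"older step; what you give up is the explicit combinatorial bookkeeping of the $J_i$'s, which the paper reuses elsewhere.

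One expository caveat: your closing paragraph converts a general $(r,R)$ with $r\leq R^{1+\Phi(R)}$ into $(k,n)$ with $n\geq\phi(k)-1$, but that is the wrong direction for this proof. What you actually need -- and what makes the argument go through -- is the converse: start from the formula (which hands you $k\geq k_0$ and $n\geq\phi(k)$ with $(s_k/s_{k+n})^\beta$ small resp.\ large), set $R=s_k$, $r=s_{k+n}$, and observe that $n\geq\phi(k)$ gives $s_{k+n}\leq s_{k+\phi(k)}\leq s_k^{1+\Phi(s_k)}=R^{1+\Phi(R)}$, so the pair is admissible. With that direction made explicit, the comparison lemma plus Theorem \ref{Cantorformula} yields a violation of the defining covering bound for any $\beta<\overline{\dim}_\Phi C_a$ (resp.\ $\beta>\underline{\dim}_\Phi C_a$), completing the proof.
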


\begin{proof}
We begin with the upper $\Phi $-dimension. Observe that if $\phi $ is
bounded, then the upper $\Phi $-dimension is the Assouad dimension and the
result is already known in that case, see \cite[Thm. 3.5]{GHM}. So assume
otherwise. Some modifications to the proof of Theorem 3.5 in \cite{GHM} are
required.

Let $d=\overline{\dim }_{\Phi }C_{a}$. From the formula for the upper $\Phi $%
-dimension of $C_{a},$ Theorem \ref{Cantorformula}, we know there must exist 
$\kappa _{0},c_{0}$ and indices $k\geq \kappa _{0}$ and $n\geq \phi (k)$
such that 
\begin{equation*}
c_{0}2^{n}\geq \left( \frac{s_{k}}{s_{k+n}}\right) ^{d-\varepsilon }\geq
2^{n\varepsilon }\left( \frac{s_{k}}{s_{k+n}}\right) ^{d-2\varepsilon },
\end{equation*}%
where the latter inequality holds because $s_{k}/s_{k+1}\geq 2$ for all $k$.

We will refer to the complementary gaps of lengths $a_{2^{k-1}},%
\ldots,a_{2^{k-1}}$ as the gaps of level $k$.

Remove from $[0,\sum a_{j}]$ the complementary gaps of levels $1,\ldots,k$
to obtain the set $J_{1}\cup \cdots \cup J_{M_{k}}\cup \{$singletons\} where 
$J_{i}$ are non-trivial, closed, disjoint intervals, $M_{k}\leq 2^{k}$ and $%
\sum_{i}\left\vert J_{i}\right\vert =2^{k}s_{k}$. Let $b_{i}$ denote the
number of gaps of step $k+n$ contained in $J_{i}$ and put $r=a_{2^{k+n}}/2$.
If we let $x_{i}$ be an endpoint of $J_{i}$, then as the gaps of step $k+n$
are at least $2r$ in length, $N_{r}(B(x_{i},\left\vert J_{i}\right\vert
)\cap E)\geq b_{i}$. Since $\sum_{i}b_{i}=2^{k+n},$ 
\begin{equation*}
\sum_{i}N_{r}(B(x_{i},\left\vert J_{i}\right\vert )\cap E)\geq 2^{k+n}\geq 
\frac{2^{k}2^{n\varepsilon }}{c_{0}}\left( \frac{s_{k}}{s_{k+n}}\right)
^{d-2\varepsilon }.
\end{equation*}

Let 
\begin{equation*}
\mathcal{I}=\{i\in \{1,\ldots,M_{k}\}:\left\vert J_{i}\right\vert \leq
s_{k}\}.
\end{equation*}%
If there is some index $i\in I$ with $N_{r}(B(x_{i},\left\vert
J_{i}\right\vert )\cap E)\geq \left( s_{k}/s_{k+n}\right) ^{d-2\varepsilon }$%
, then 
\begin{equation}
N_{r}(B(x_{i},s_{k})\cap E)\geq N_{r}(B(x_{i},\left\vert J_{i}\right\vert
)\cap E)\geq \left( \frac{s_{k}}{s_{k+n}}\right) ^{d-2\varepsilon }.
\label{Ineq1}
\end{equation}%
Otherwise, 
\begin{eqnarray*}
\sum_{i\notin \mathcal{I}}N_{r}(B(x_{i},\left\vert J_{i}\right\vert )\cap E)
&=&\sum_{i}N_{r}(B(x_{i},\left\vert J_{i}\right\vert )\cap E)-\sum_{i\in 
\mathcal{I}}N_{r}(B(x_{i},\left\vert J_{i}\right\vert )\cap E) \\
&\geq &2^{k+n}-\left\vert \mathcal{I}\right\vert \max_{i\in \mathcal{I}%
}N_{r}(B(x_{i},\left\vert J_{i}\right\vert )\cap E) \\
&\geq &\frac{2^{k}2^{n\varepsilon }}{c_{0}}\left( \frac{s_{k}}{s_{k+n}}%
\right) ^{d-2\varepsilon }-2^{k}\left( \frac{s_{k}}{s_{k+n}}\right)
^{d-2\varepsilon } \\
&\geq &2^{k}\left( \frac{s_{k}}{s_{k+n}}\right) ^{d-2\varepsilon }\left( 
\frac{2^{n\varepsilon }}{c_{0}}-1\right) .
\end{eqnarray*}%
Since $n\geq \phi (k)\rightarrow \infty ,$ we can assume $2^{n\varepsilon
}/c_{0}-1\gtrsim 2^{n\varepsilon }$. Recall that $\sum_{i}\left\vert
J_{i}\right\vert =2^{k}s_{k},$ thus 
\begin{equation*}
\sum_{i\notin \mathcal{I}}N_{r}(B(x_{i},\left\vert J_{i}\right\vert )\cap
E)\gtrsim 2^{k}\left( \frac{s_{k}}{s_{k+n}}\right) ^{d-2\varepsilon
}2^{n\varepsilon }\gtrsim 2^{k}2^{n\varepsilon }\left( \frac{%
2^{-k}\sum_{i\notin \mathcal{I}}\left\vert J_{i}\right\vert }{a_{2^{k+n}}}%
\right) ^{d-2\varepsilon }
\end{equation*}%
An application of Holder's inequality gives%
\begin{eqnarray*}
\sum_{i\notin \mathcal{I}}N_{r}(B(x_{i},\left\vert J_{i}\right\vert )\cap E)
&\gtrsim &2^{k}2^{n\varepsilon }\left( \frac{2^{-k}}{a_{2^{k+n}}}\right)
^{d-2\varepsilon }\sum_{i\notin \mathcal{I}}\left\vert J_{i}\right\vert
^{d-2\varepsilon }\left\vert \mathcal{I}^{c}\right\vert ^{-(1-d+2\varepsilon
)} \\
&\gtrsim &2^{n\varepsilon }\left( \frac{2^{k}}{\left\vert \mathcal{I}%
^{c}\right\vert }\right) ^{1-d+2\varepsilon }\frac{\sum_{i\notin \mathcal{I}%
}\left\vert J_{i}\right\vert ^{d-2\varepsilon }}{r^{d-2\varepsilon }}\gtrsim
2^{n\varepsilon }\frac{\sum_{i\notin \mathcal{I}}\left\vert J_{i}\right\vert
^{d-2\varepsilon }}{r^{d-2\varepsilon }},
\end{eqnarray*}%
with the final inequality arising because $\left\vert \mathcal{I}%
^{c}\right\vert \leq M_{k}\leq 2^{k}$ and $d\leq 1$. It follows that in this
case there must be some choice of $i\notin \mathcal{I}$ such that 
\begin{equation}
N_{r}\left( B(x_{i},\left\vert J_{i}\right\vert )\cap E\right) \geq
c2^{n\varepsilon }\left( \frac{\left\vert J_{i}\right\vert }{r}\right)
^{d-2\varepsilon }.  \label{Ineq2}
\end{equation}%
By definition, $i\notin \mathcal{I}$ implies $\left\vert J_{i}\right\vert
\geq s_{k}$ and thus $\left\vert J_{i}\right\vert ^{1+\Phi (|J_{i}|)}\geq
s_{k}^{1+\Phi (s_{k})}\geq s_{k+n}\sim r$.

As either (\ref{Ineq1}) or (\ref{Ineq2}) must hold, we deduce that $%
\overline{\dim }_{\Phi }E\geq d-2\varepsilon $ and that gives the desired
result.

The proof for the lower $\Phi $-dimension is a straightforward modification
of Theorem 4.1 of \cite{GHM}.
\end{proof}

Combining Proposition \ref{Dec} and Theorem \ref{Cantorbd} gives the
following statement.

\begin{corollary}
\label{Cor:IntDim}If $a$ is any level comparable sequence, then for all $%
E\in \mathcal{C}_{a}$ we have $\overline{\dim }_{\Phi }E\in \left[ \overline{%
\dim }_{\Phi }C_{a},\overline{\dim }_{\Phi }D_{a}\right] $ and \underline{$%
\dim $}$_{\Phi }E\in \left[ 0,\underline{\dim }_{\Phi }C_{a}\right] $. In
particular, these statements are true for the quasi-Assouad dimensions.
\end{corollary}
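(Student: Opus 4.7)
The plan is to simply combine the two preceding results, noting that each bound in the corollary has already been proved as half of either Proposition \ref{Dec} or Theorem \ref{Cantorbd}. First, Proposition \ref{Dec} supplies the upper bound on $\overline{\dim}_{\Phi} E$ and the lower bound on $\underline{\dim}_{\Phi} E$: for every $E \in \mathcal{C}_a$ we have $\overline{\dim}_{\Phi} E \leq \overline{\dim}_{\Phi} D_a$ and $\underline{\dim}_{\Phi} E \geq \underline{\dim}_{\Phi} D_a = 0$, the equality on the right coming from the fact that $D_a$ has isolated points. Second, Theorem \ref{Cantorbd}, whose hypothesis that $a$ be level comparable is exactly the hypothesis we assume here, supplies the complementary bounds $\overline{\dim}_{\Phi} E \geq \overline{\dim}_{\Phi} C_a$ and $\underline{\dim}_{\Phi} E \leq \underline{\dim}_{\Phi} C_a$. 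Combining these four inequalities delivers the two inclusions $\overline{\dim}_{\Phi} E \in [\overline{\dim}_{\Phi} C_a, \overline{\dim}_{\Phi} D_a]$ and $\underline{\dim}_{\Phi} E \in [0, \underline{\dim}_{\Phi} C_a]$.

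For the second sentence of the corollary, concerning the quasi-Assouad dimensions, the cleanest route is through the special $\Phi$-realisation of the quasi-Assouad dimensions. By Remark~(iii) in Section \ref{sec:Notation}, $\dim_{qA} E = \lim_{\theta \to 1} \overline{\dim}_{\Phi_\theta} E$ with $\Phi_\theta(x) = 1/\theta - 1$, and similarly for $\dim_{qL}$. Applying the first part of the corollary to each $\Phi_\theta$ gives the two-sided bound for every $\theta \in (0,1)$, and then passing to the limit $\theta \to 1$ (which preserves the weak inequalities) yields $\dim_{qA} E \in [\dim_{qA} C_a, \dim_{qA} D_a]$ and $\dim_{qL} E \in [0, \dim_{qL} C_a]$. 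Alternatively, one can invoke Proposition \ref{qA} to realise the quasi-Assouad dimension of each of $E$, $C_a$ and $D_a$ as a $\Phi$-dimension (for possibly different $\Phi$'s), but the limit argument is slightly more economical because it avoids juggling three different dimension functions.

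There is no genuine obstacle here: the corollary is a formal consequence of results already in hand, and the only subtlety is to remark that the level comparable hypothesis is needed precisely to apply Theorem \ref{Cantorbd}, while Proposition \ref{Dec} required only that $a$ be decreasing and summable. Since $\mathcal{C}_a$ is only defined for such sequences anyway, no additional hypothesis beyond level comparability is needed.
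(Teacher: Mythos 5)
Your proposal matches the paper's own argument exactly: the paper introduces the corollary with the single line ``Combining Proposition \ref{Dec} and Theorem \ref{Cantorbd} gives the following statement,'' which is precisely the combination you spell out. Your added elaboration of the quasi-Assouad case via the limit $\theta\to 1$ of the $\Phi_\theta$-dimensions is a correct and natural way to make the ``in particular'' explicit.
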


\subsection{An interval of $\Phi $-dimensions for complementary sets}

In \cite{GHM} it was shown that if $a$ is any level comparable sequence,
then for every $c\in \lbrack 0,\dim _{L}C_{a}]$ and $d\in $ $[\dim
_{A}C_{a},1]$ there are sets $E_{c},E_{d}\in \mathcal{C}_{a}$ with $\dim
_{L}E_{c}=c$ and $\dim _{A}E_{d}=d$.\footnote{%
Actually, the assumption that $a$ is doubling suffices for the upper Assouad
dimension.} These results continue to be true for the quasi-Assouad and $%
\Phi $-dimensions when $\Phi \rightarrow \delta$, with $\delta\in \lbrack
0,\infty ]$. For the lower $\Phi $-dimensions essentially the same proof as
given in \cite{GHM} for the lower Assouad dimension works. We give a brief
sketch of the main idea at the beginning of the proof of Theorem \ref{Thm5}.

For the upper $\Phi $-dimension, note that the case $\delta=\infty $ is
trivial since we recover the upper box dimension and all complementary sets
of a given sequence have the same upper box dimension. Different proofs are
required for the cases $\Phi \rightarrow \delta$ for $\delta=0$ or $\delta>0$%
, and these are necessarily different from the proof given for the Assouad
dimension in \cite{GHM} as the set constructed there only exhibits large
local `thickness' on scales $r$ that are nearly as large as $R,$ and hence
are not suitable for use in obtaining these other dimensions.

\begin{theorem}
\label{Thm5} Suppose $a$ is a level comparable sequence and $\Phi $ is a
dimension function with $\Phi (x)\rightarrow \delta$, for some $\delta\in[%
0,\infty]$. Then for every $c\in \lbrack 0,\underline{\dim }_{\Phi }C_{a}]$
and $d\in \lbrack \overline{\dim }_{\Phi }C_{a},\overline{\dim}_\Phi D_a],$
there are sets $E_{c},E_{d}\in \mathcal{C}_{a}$ with $\underline{\dim }%
_{\Phi }E_{c}=c$ and $\overline{\dim }_{\Phi }E_{d}=d$. A similar statement
holds with $\underline{\dim }_{\Phi }C_{a}$ replaced by $\dim _{qL}C_{a}$
and $\overline{\dim }_{\Phi }C_{a}$ replaced by $\dim _{qA}C_{a}$.
\end{theorem}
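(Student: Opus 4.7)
The plan is to handle the lower and upper $\Phi$-dimension statements separately, and within each to split into the easy and hard cases. Throughout, the sets $E_c$ and $E_d$ will be obtained by rearranging the gaps $\{a_j\}$, and the target value will be verified using the formulas analogous to Theorem \ref{Cantorformula} together with the bounds of Corollary \ref{Cor:IntDim}.

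For the lower $\Phi$-dimension, I would first note that $c = 0$ is attained by $D_a$ (Proposition \ref{Dec}) and $c = \underline{\dim}_\Phi C_a$ is attained by $C_a$. For intermediate $c$, I would adapt the construction of \cite[Thm.\ 4.1]{GHM}: starting from $C_a$, for a carefully spaced sequence of levels $k_j$, I would replace the Cantor layout inside certain level-$k_j$ intervals by a local decreasing rearrangement of length $\phi(k_j)$ groups of gaps, whose effect is to create regions of $E_c$ that are locally as thin as $D_a$ at scale $R = s_{k_j}$ and down to $r = R^{1+\Phi(R)}$. Because the $\Phi$-dimension picks up the infimal thinness over admissible pairs $(R,r)$, tuning the density of the inserted decreasing blocks interpolates continuously between $0$ and $\underline{\dim}_\Phi C_a$; the level-comparability of $a$ ensures that the computations in \cite{GHM} go through with $s_n$ replaced by $s_n^{1+\Phi(s_n)}$, so the proof is essentially the same.

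For the upper $\Phi$-dimension, the case $\delta = \infty$ is immediate from Proposition \ref{Prop:box}(ii) since every $E \in \mathcal C_a$ shares the same upper box dimension. For $\delta \in [0,\infty)$, $d = \overline{\dim}_\Phi C_a$ and $d = \overline{\dim}_\Phi D_a$ are attained by $C_a$ and $D_a$ themselves, so only intermediate values need new work. The idea is to build $E_d$ that on most scales looks like $C_a$, but at a sparse sequence of levels $k_j$ inserts a locally-decreasing block: at level $k_j$, within a single Cantor interval of length $s_{k_j}$, I would remove the next $2^{\phi(k_j)} - 1$ gaps in \emph{decreasing} order from left to right, thereby placing a shrunken copy of an initial segment of $D_a$ inside the set. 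Choosing the depth of insertion $\phi(k_j)$ so that $s_{k_j+\phi(k_j)} \sim s_{k_j}^{1+\Phi(s_{k_j})}$ ensures that the induced local thickness is witnessed by an $(R,r)$ pair admissible for the $\Phi$-dimension. By varying the proportion between pure-Cantor levels and decreasing-insertion levels, and by varying the length $\phi(k_j)$ of the insertion relative to the depth dictated by $\Phi$, one interpolates $\overline{\dim}_\Phi E_d$ continuously between $\overline{\dim}_\Phi C_a$ and $\overline{\dim}_\Phi D_a$.

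The main obstacle is exactly this scale-matching for the upper dimension: the construction of \cite{GHM} for the Assouad dimension places the thick region at scales $r$ nearly equal to $R$, which is no longer admissible once $\Phi \not\equiv 0$. One must simultaneously (i)~choose the level $k_j$ at which to insert a decreasing block, (ii)~choose its depth so that the ball of radius $R = s_{k_j}$ around a point in the block contains the gap structure of $D_a$ down to scale $r = R^{1+\Phi(R)}$, and (iii)~verify via the upper bound in Theorem \ref{Cantorbd} (applied to the ``outside'' Cantor structure) that no other $(R,r)$ pair produces a larger exponent. The continuity of the map from insertion density to $\overline{\dim}_\Phi E_d$, plus the endpoints being attained, gives every intermediate $d$ by an intermediate value argument. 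The quasi-Assouad statement then follows by applying the above with $\Phi = \Phi_\theta = 1/\theta - 1$, letting $\theta \to 1$, and invoking Proposition \ref{qA} to identify $\dim_{qA} C_a$ and $\dim_{qL} C_a$ as the relevant $\Phi$-dimensions at the endpoints.
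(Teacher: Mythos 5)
Your proposal diverges from the paper's proof in several substantive ways, and two of them are genuine gaps rather than mere stylistic differences. First, you do not isolate the case $\Phi\to\delta\in(0,\infty)$, which the paper handles by an entirely different and much cleaner mechanism: invoking the explicit formula $\overline{\dim}_A^\theta D=\min\bigl(\overline{\dim}_B D/(1-\theta),1\bigr)$ for decreasing sets and then extracting a subsequence $b\subseteq a$ whose decreasing rearrangement $D_b$ has exactly the prescribed box dimension $\gamma\sigma=d(1-\theta)$, so that $\overline{\dim}_A^\theta D_b=d$, while the remaining gaps $\widetilde a$ form a Cantor set comparable to $C_a$. Taking $E_d=D_b\cup C_{\widetilde a}$ and using finite stability of the upper $\Phi$-dimension finishes that case. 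Your proposal never produces a set with a \emph{specific} target value in this range.

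Second, and more seriously, the construction you sketch for the hard case $\Phi\to 0$ would not give intermediate values. You suggest inserting a full decreasing block of depth $\phi(k_j)$ inside a single Cantor interval at sparse levels, and then interpolating by varying the ``density'' of such insertions, appealing to an intermediate value argument. But $\overline{\dim}_\Phi$ is governed by the worst admissible $(R,r)$ pair: once a decreasing block of depth $\phi(k_j)$ sits at even one level, the pair $R=s_{k_j}$, $r\sim s_{k_j}^{1+\Phi(s_{k_j})}$ already witnesses the $D_a$-thickness, so the dimension jumps to $\overline{\dim}_\Phi D_a$ regardless of how sparse the insertions are. Making the inserted block shallower than $\phi(k_j)$ does not help either, because such a block is then invisible at all admissible $(R,r)$ pairs and the dimension stays at $\overline{\dim}_\Phi C_a$. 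The paper avoids this dichotomy by building an auxiliary set $\mathcal A=\bigcup_k A_k$ disjoint from the residual Cantor set $\mathcal B$, where each $A_k=X_{m_k,n_k}$ is a Cantor-tree of \emph{blocks}: level $j$ has $2^{j-1}$ blocks, each consisting of $J_j$ adjacent gaps of length $\alpha_{m+i(j)}$, with $J_j$ and $i(j)$ calibrated so that $J_j\alpha_{m+i(j)}\sim\alpha_m\beta^j$ for the target ratio $\beta=2^{-1/d}$. This hybrid geometry makes the local covering count scale exactly like $(R/r)^d$ uniformly over all admissible pairs, which is what the claim in the paper verifies in detail; a pure decreasing insertion cannot deliver this. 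The same objection applies to your lower-dimension sketch: the paper splits $a$ into an $\alpha$-Ahlfors regular Cantor subsequence and a comparable residual Cantor set (following Theorem~4.3 of \cite{GHM}), whereas inserting decreasing blocks in place would drive $\underline{\dim}_\Phi$ to $0$, not to an arbitrary prescribed $c\in(0,\underline{\dim}_\Phi C_a)$.
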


\begin{remark}
We remind the reader that for any doubling sequence $a$ (and hence any level
comparable sequence) and any dimension function $\Phi \rightarrow 0$, we
have $\overline{\dim }_{B}D_{a}>0$ and thus $\overline{\dim }_{\Phi
}D_{a}\geq \dim _{qA}D_{a}=1$ by \cite{GH}.
\end{remark}

Combining this result with Theorem \ref{Cantorbd} gives the following.

\begin{corollary}
Suppose $a$ is any level comparable sequence and $\Phi $ is a dimension
function with $\Phi (x)\rightarrow \delta$. Then%
\begin{equation*}
\{\overline{\dim }_{\Phi }E:E\in \mathcal{C}_{a}\}=\left[ \overline{\dim }%
_{\Phi }C_{a},\overline{\dim }_{\Phi }D_{a}\right]
\end{equation*}%
and%
\begin{equation*}
\{\underline{\dim }_{\Phi }E:E\in \mathcal{C}_{a}\}=\left[ \underline{\dim }%
_{\Phi }D_{a},\underline{\dim }_{\Phi }C_{a}\right] =\left[ 0,\underline{%
\dim }_{\Phi }C_{a}\right] .
\end{equation*}
\end{corollary}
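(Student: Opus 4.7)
The plan is to assemble the corollary from results already stated; no new construction or delicate estimate is needed. Both set-valued equalities split into a ``$\subseteq$'' inclusion, which packages the upper/lower bounds on $\overline{\dim}_{\Phi}E$ and $\underline{\dim}_{\Phi}E$ uniformly over $E \in \mathcal{C}_{a}$, and a ``$\supseteq$'' inclusion, which is provided by the interpolation in Theorem \ref{Thm5}.

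First I would dispatch the inclusion ``$\subseteq$'' by quoting Corollary \ref{Cor:IntDim}, which itself is the combination of Proposition \ref{Dec} and Theorem \ref{Cantorbd}. Under the level comparable hypothesis on $a$, this tells us that every $E \in \mathcal{C}_{a}$ satisfies
\begin{equation*}
\overline{\dim}_{\Phi}E \in [\overline{\dim}_{\Phi}C_{a},\,\overline{\dim}_{\Phi}D_{a}] \quad \text{and} \quad \underline{\dim}_{\Phi}E \in [0,\,\underline{\dim}_{\Phi}C_{a}].
\end{equation*}
The identity $\underline{\dim}_{\Phi}D_{a}=0$ used in the statement of the corollary is part of Proposition \ref{Dec} and just reflects the fact that $D_{a}$ has isolated points, which forces its lower $\Phi$-dimension to vanish for any dimension function.

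Next I would establish the reverse inclusion ``$\supseteq$'' by invoking Theorem \ref{Thm5} directly. The hypothesis $\Phi(x)\to\delta\in[0,\infty]$ is exactly the one assumed in that theorem, and under it we are given, for each target $d\in[\overline{\dim}_{\Phi}C_{a},\overline{\dim}_{\Phi}D_{a}]$, a set $E_{d}\in\mathcal{C}_{a}$ with $\overline{\dim}_{\Phi}E_{d}=d$, and for each $c\in[0,\underline{\dim}_{\Phi}C_{a}]$, a set $E_{c}\in\mathcal{C}_{a}$ with $\underline{\dim}_{\Phi}E_{c}=c$. Since the relevant intervals are closed, no special argument is needed for the endpoints; indeed the extreme values are realized by $C_{a}$ and $D_{a}$ themselves, consistent with Proposition \ref{Dec} and Theorem \ref{Cantorbd}.

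Because all of the substantive work lives in Theorem \ref{Cantorbd} (the Cantor-set extremality) and in the construction behind Theorem \ref{Thm5} (the interpolation across the dimension interval), the corollary is genuinely just an assembly statement and there is no real obstacle. The only bookkeeping point is to check that the hypotheses align: Proposition \ref{Dec} needs only that $a$ is decreasing and summable; Theorem \ref{Cantorbd} needs $a$ level comparable; and Theorem \ref{Thm5} needs $a$ level comparable together with $\Phi\to\delta\in[0,\infty]$. All of these are encompassed by the hypotheses of the corollary, so the two equalities follow at once.
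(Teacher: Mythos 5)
Your proposal is correct and follows exactly the route the paper intends: the paper introduces the corollary with the single line ``Combining this result [Theorem \ref{Thm5}] with Theorem \ref{Cantorbd} gives the following,'' and your write-up simply spells out that assembly, using Corollary \ref{Cor:IntDim} (equivalently Proposition \ref{Dec} plus Theorem \ref{Cantorbd}) for the ``$\subseteq$'' inclusion and Theorem \ref{Thm5} for the ``$\supseteq$'' inclusion. The hypothesis check you do at the end is sound, so there is nothing to add.
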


\begin{proof}[Proof of Theorem \protect\ref{Thm5}]
For the lower dimension case, the same proof given in \cite[Theorem 4.3]{GHM}
for the lower Assouad dimension, with the obvious modifications, works for
the lower $\Phi $-dimensions and the lower quasi-Assouad dimension. A sketch
of the proof is that for $0<\alpha <\underline{\dim }_{\Phi }C_{a}$, it is
possible to find a subsequence of $a$ whose Cantor rearrangement is an $%
\alpha $-Ahlfors regular set and such that the Cantor rearrangement of the
remaining gaps has lower $\Phi $-dimension equal to $\underline{\dim }_{\Phi
}C_{a}$. This gives a complementary set $E$ with $\underline{\dim }_{\Phi
}E=\alpha $.

For the upper $\Phi $-dimension problem, we first remark that if $\Phi
\rightarrow \infty $, then $\overline{\dim }_{\Phi }E=\overline{\dim }_{B}E$
and all sets $E\in \mathcal{C}_{a}$ have the same upper box dimension.

Thus it remains to study the upper $\Phi $-dimension problem when $\Phi
\rightarrow \delta \in \lbrack 0,\infty )$. We will first give the proof for
the case $\Phi \rightarrow \delta \neq 0$, where we can take advantage of an
explicit formula for the $\Phi $-dimension of the decreasing rearrangement.
The harder case, $\delta =0$, will be done second. \vspace{0.2cm}

\noindent \textbf{Case $\mathbf{\Phi \rightarrow \delta }\in (0,\infty )$.}

According to Corollary \ref{Coro:p}, $\overline{\dim }_{\Phi }E=\overline{%
\dim }_{A}^{\theta }E$ for all $E$, where $\theta =(1+\delta )^{-1}$. 
Observe that for any decreasing set $D$, 
\begin{equation}
\overline{\dim }_{A}^{\theta }D=\min \left( \frac{\overline{\dim }_{B}D}{%
1-\theta },1\right) .  \label{formula}
\end{equation}%
This follows from \cite[Theorem 6.2]{FYAdv} and \cite[Theorem 2.1]{FTale}.

Given any $0<d<\overline{\dim }_{A}^{\theta }D_{a}$, we will use the above
formula to construct a subsequence $b$ of $a$ such that if $\widetilde{a}$
is the subsequence obtained after removing $b$ from $a$, then $\overline{%
\dim }_{A}^{\theta }D_{b}=d$ and $\overline{\dim }_{A}^{\theta }C_{%
\widetilde{a}}=\overline{\dim }_{A}^{\theta }C_{a}$. The set $%
E_{d}=D_{b}\cup C_{\widetilde{a}}$ will belong to $\mathcal{C}_{a}$ and by
the union property, its upper $\theta $-dimension will be given by 
\begin{equation*}
\overline{\dim }_{A}^{\theta }E_{d}=\max (d,\overline{\dim }_{A}^{\theta
}C_{a}),
\end{equation*}%
which will prove the statement of the theorem.

Let $\sigma :=\overline{\dim }_{B}D_{a}$. By \cite[Section 3.4]{Tricot} we
have 
\begin{equation*}
\sigma =\limsup_{n\rightarrow \infty }\frac{\log n}{-\log a_{n}}%
=\lim_{k\rightarrow \infty }\frac{\log n_{k}}{-\log a_{n_{k}}},
\end{equation*}%
where $\{n_{k}\}$ is chosen to be a suitable sequence, say $n_{k+1}\geq
2^{n_{k}}$. Choose $\gamma $ such that $\gamma \sigma =d(1-\theta )$ and
define the subsequence $b$ by 
\begin{equation*}
b_{m}=\left\{ 
\begin{array}{cc}
a_{n_{k}}, & \lfloor m^{1/\gamma }\rfloor \leq n_{k}<\lfloor (m+1)^{1/\gamma
}\rfloor \\ 
a_{\lfloor m^{1/\gamma }\rfloor }, & {\text{ otherwise}}%
\end{array}%
\right. .
\end{equation*}

Note that for the integers $m_{k}$ where $b_{m_{k}}=a_{n_{k}}$ we have $%
m_{k}\sim n_{k}^{\gamma }$, so 
\begin{equation*}
\lim_{k}\frac{\log m_{k}}{-\log b_{m_{k}}}=\lim_{k}\frac{\log n_{k}^{\gamma }%
}{-\log a_{n_{k}}}=\sigma \gamma .
\end{equation*}%
Moreover, for $\epsilon >0$ we have $\log n/(-\log a_{n})<\sigma +\epsilon $
for all $n$ large enough, so for large $m$, with $m\neq m_{k}$, 
\begin{equation*}
\frac{\log m}{-\log b_{m}}=\frac{\gamma \log m^{1/\gamma }}{-\log a_{\lfloor
m^{1/\gamma }\rfloor }}\leq \gamma (\sigma +2\epsilon ).
\end{equation*}%
Therefore, $\overline{\dim }_{B}D_{b}=\sigma \gamma $, and by (\ref{formula}%
) $\overline{\dim }_{A}^{\theta }D_{b}=d$.

Finally, note that $\lfloor (m+1)^{1/\gamma }\rfloor -\lfloor m^{1/\gamma
}\rfloor \rightarrow \infty $ as $m$ increases. As the original sequence was
doubling, this ensures that the sequence $\widetilde{a}$ consisting of the
remaining gaps is comparable to the original sequence $a$. In consequence, $%
\overline{\dim }_{A}^{\theta }C_{\widetilde{a}}=\overline{\dim }_{A}^{\theta
}C_{a}$ and, as we noted above, that completes the proof in this case.

\smallskip

\noindent \textbf{Case $\mathbf{\Phi \rightarrow 0}$.}

We will give a detailed proof for the quasi-Assouad dimension. It will be
clear that the same arguments will work for the upper $\Phi $-dimension with 
$\Phi \rightarrow 0$. Our proof is constructive. The set $E=E_{d}\in 
\mathcal{C}_{a}$ will again have the form $E=\mathcal{A}\cup \mathcal{B}$,
with $\dim _{qA}\mathcal{A}$ equal to the desired in-between value $d$ and $%
\dim _{qA}\mathcal{B=}\dim _{qA}C_{a}$. The union property for the
quasi-Assouad dimension will ensure that $E$ has the desired quasi-Assouad
dimension.

If $b=\{b_{j}\}$ is the sequence with $b_{2^{j}+t}=a_{2^{j}}$ for $%
t=0,\ldots,2^{j}-1$, then $a,b$ are comparable sequences and if $E$ is the
set formed with some rearrangement of $a$ and $F$ is the corresponding
rearrangement of $b,$ then $E$ and $F$ are bi-Lipschitz equivalent. So
without loss of generality we will assume $a$ is constant along diadic
blocks. Moreover, a level comparable sequence $\{a_{j}\}$ has the property
that there are constants $u,v$ such that 
\begin{equation*}
1>u\geq \frac{a_{2^{j}}}{a_{2^{j-1}}}\geq v>0\text{ for all }j.
\end{equation*}

If $\dim _{qA}C_{a}=1$, there is nothing to do. So assume $1>d\geq \dim
_{qA}C_{a},$ say $d=\log 2/\left\vert \log \beta \right\vert $ where $\beta
<1/2$.

To simplify the notation, we will let $\alpha _{j}=a_{2^{j}}$. Temporarily
fix $m$. Given $j\geq 1,$ choose the minimal index $i(j)\geq 1$ such that $%
\alpha _{m+i(j)}/\alpha _{m}\leq \beta ^{j}$ and choose the maximal integer $%
J_{j}\geq 1$ such that 
\begin{equation*}
J_{j}\frac{\alpha _{m+i(j)}}{\alpha _{m}}\leq \beta ^{j}.
\end{equation*}%
The minimality of $i(j)$ ensures that 
\begin{equation*}
J_{j}\frac{\alpha _{m+i(j)}}{\alpha _{m}}\leq \beta ^{j}<\frac{\alpha
_{m+i(j)-1}}{\alpha _{m}}
\end{equation*}%
which implies 
\begin{equation*}
J_{j}\leq \frac{\alpha _{m+i(j)-1}}{\alpha _{m+i(j)}}\leq \frac{1}{v}.
\end{equation*}%
Similarly, the maximality of $J_{j}$ means that%
\begin{equation*}
(J_{j}+1)\frac{\alpha _{m+i(j)}}{\alpha _{m}}>\beta ^{j},
\end{equation*}%
so 
\begin{equation*}
\alpha _{m+i(j)}\geq \frac{\alpha _{m}\beta ^{j}}{1+1/v}=c_{1}\alpha
_{m}\beta ^{j}
\end{equation*}%
where $c_{1}>0$ is independent of $m$ and $j$. Moreover, the fact that $%
v^{i}\leq \alpha _{m+i}/\alpha _{m}\leq u^{i}$, coupled with the definition
of $i(j),$ implies%
\begin{equation*}
jc_{3}\leq j\frac{\log \beta }{\log v}+1\leq i(j)\leq j\frac{\log \beta }{%
\log v}=jc_{2}
\end{equation*}%
where we again note that $c_{2},c_{3}$ are positive constants, independent
of $m$ and $j$.

\medskip

\textbf{Construction of the set }$E$\textbf{:}

We now form a Cantor-tree like arrangement with blocks of gaps. The first
block will consist of $J_{1}$ gaps of length $\alpha _{m+i(1)}$ placed
adjacently. The blocks of level 2 will each consist of $J_{2}$ gaps of
length $\alpha _{m+i(2)}$ placed adjacently and there will be two blocks of
level 2, one to the left and the other to the right of the block of level 1.
In general, there will be $2^{j-1}$ blocks of level $j,$ each consisting of $%
J_{j}$ gaps of length $\alpha _{m+i(j)}$ placed in a Cantor-like
arrangement. If we do this for $j=1,\ldots,n$, we will call the resulting
finite set $X_{m,n}$. Note that the length of any block of level $j$ in $%
X_{m,n}$ is equal to $J_{j}\alpha _{m+i(j)}$ and satisfies 
\begin{equation}
c_{1}\alpha _{m}\beta ^{j}\leq J_{j}\alpha _{m+i(j)}\leq \alpha _{m}\beta
^{j}.  \label{B1}
\end{equation}%
Hence the diameter of $X_{m,n}$ is at least the length of block 1 which is $%
\geq c_{1}\alpha _{m}\beta ,$ and the diameter of $X_{m,n}$ is at most 
\begin{equation}
\sum_{j=1}^{n}2^{j-1}\alpha _{m}\beta ^{j}\leq \alpha _{m}\frac{\beta }{%
1-2\beta }=c_{4}\alpha _{m}\beta .  \label{B2}
\end{equation}

Since $i(j)\geq c_{2}j$, for each $k$ the number of gaps of length $\alpha
_{m+k}$ that we will require is%
\begin{equation*}
\sum_{\substack{ j\in \{1,\ldots,n\}  \\ i(j)=k}}J_{j}2^{j-1}\leq
\sum_{j=1}^{k/c_{2}}J_{j}2^{j-1}\leq \frac{1}{v}2^{k/c_{2}}.
\end{equation*}%
As $j\in \{1,\ldots,n\}$ and $i(j)$ $\leq c_{3}j,$ we have $k\leq c_{3}n$.\ Of
course, for each $k$ there are a total of $2^{m+k}$ gaps of this size
available in the sequence $a$, so we have enough gaps, even twice as many as
we need, provided 
\begin{equation*}
\frac{1}{v}2^{k/c_{2}}\leq 2^{m+k-1}\text{ for each }k=1,\ldots,c_{3}n\text{.%
}
\end{equation*}%
Hence there is some $c_{5}>0$ (and independent of $m)$ such that if $n\leq
c_{5}m$, then there will be enough gaps to carry out this construction.

Lastly, we will select a rapidly growing sequence of integers $\{m_{k}\}$
and let $n_{k}=[c_{5}m_{k}]$. We will set $A_{k}=X_{m_{k},n_{k}}$. We will
want $m_{k+1}$ to be much larger than $m_{k}+c_{3}n_{k},$ so that we will
not use any gaps from the same diadic blocks in two different sets $A_{j}$.
Also, we will want to choose $m_{k}$ increasing so rapidly that the diameter
of $A_{k+1}$ is at most $1/2$ diameter of $A_{k}$.

We will position the sets $A_{k}$ adjacent to each other in decreasing order
and let%
\begin{equation*}
\mathcal{A}=\bigcup_{k=1}^{\infty }A_{k}.
\end{equation*}%
The gaps of the sequence $\{a_{j}\}$ that were not used in the construction
of the sets $A_{k}$ will be then placed to form a Cantor set $\mathcal{B}$
to the left of $A_{1}$. This completes the construction of the set $E=%
\mathcal{A}\cup \mathcal{B}\in C_{a}$.

\medskip

\textbf{Computation of }$\dim _{qA}E$\textbf{:}

Since there are at least half the gaps $a_{j}$ left in each diadic block,
the decreasing sequence consisting of the remaining gaps is comparable to
the original sequence. Hence $\dim _{qA}\mathcal{B}=\dim _{qA}C_{a}\leq d$.
Thus, to see that the rearranged set $\mathcal{A}\cup \mathcal{B}$ has
quasi-Assouad dimension $d,$ it will be enough to prove $\dim _{qA}\mathcal{A%
}=d$.

(a) Lower bound for $\dim _{qA}\mathcal{A}$:

We will let $\left\vert Y\right\vert $ denote the diameter of a set $%
Y\subseteq \mathbb{R}$.

To see that $\dim _{qA}\mathcal{A}\geq d$, consider $R=$ $\left\vert
A_{k}\right\vert $ $\sim \alpha _{m_{k}}\beta $ (by (\ref{B2})) and $r=\frac{%
1}{2}$length of block of level $n_{k}$ in $A_{k}$, so that $r\sim \alpha
_{m_{k}}\beta ^{n_{k}}$ (by (\ref{B1})). Notice that if $\delta >0$
(independent of $k$) is chosen such that $\beta ^{c_{5}}v^{-\delta }<1,$
then as $a_{2^{n}}\geq v^{n},$ for sufficiently large $k,$%
\begin{equation*}
\frac{r}{R^{1+\delta }}\leq c\frac{\alpha _{m_{k}}\beta ^{n_{k}}}{\left(
\alpha _{m_{k}}\beta \right) ^{1+\delta }}=c\frac{\beta ^{n_{k}}}{\beta
^{1+\delta }\alpha _{m_{k}}^\delta}\leq \frac{c\beta^{-1}}{\beta ^{1+\delta }}\left( \frac{%
\beta ^{c_{5}}}{v^{\delta }}\right) ^{m_{k}}<1.
\end{equation*}%
If we let $z\in A_{k},$ then $N_{r}(B(z,R)\cap A_{k})\geq 2^{n_{k}-1}$ since
the blocks of level $n_{k}$ are separated by at least $r$, while $%
(R/r)^{d}\sim \beta ^{-dn_{k}}=2^{n_{k}}$. In order for there to be a
constant $C$ such that $N_{r}(B(z,R)\cap A_{k})\leq C(R/r)^{t}$ for all $k$,
we must have $t\geq \log 2/\left\vert \log \beta \right\vert =d$. This shows 
$\dim _{qA}\bigcup A_{k}\geq d.$

\medskip

(b) Upper bound for $\dim _{qA}\mathcal{A}$ :

For this, we will prove the following claim.

\begin{claim}
There is a constant $C,$ independent of $k,$ such that%
\begin{equation}
N_{r}(B(z,R)\cap A_{k})\leq C\left( \frac{\min (\left\vert A_{k}\right\vert
,R)}{r}\right) ^{d}  \label{Adim1}
\end{equation}%
for all $r<\min (\left\vert A_{k}\right\vert ,R)$ and all $z\in \mathcal{A}$.
\end{claim}

Assuming the claim, we can even prove that $\dim _{A}\mathcal{A}\leq d$:
Take $R\leq \left\vert A_{1}\right\vert /2$. Suppose $r<R$ and that%
\begin{equation*}
\left\vert A_{k+1}\right\vert /2<R\leq \left\vert A_{k}\right\vert /2.
\end{equation*}%
Then $B(z,R)$ can intersect at most two (consecutive) sets $A_{i}$ for $%
i\leq k,$ (say $i=m,m+1),$ as well as possibly $\bigcup_{i=k+1}^{\infty
}A_{i}$. Assume%
\begin{equation*}
\left\vert A_{j}\right\vert \leq r<\left\vert A_{j-1}\right\vert
\end{equation*}%
where, of course, $j\geq k+1$. Since $\sum_{i=j}^{\infty }\left\vert
A_{i}\right\vert \leq 2\left\vert A_{j}\right\vert $, one ball of radius $r$
will cover $\bigcup_{i=j}^{\infty }A_{i}$. As $r\leq \left\vert
A_{k}\right\vert \leq \left\vert A_{m+1}\right\vert $, from (\ref{Adim1}) we
have 
\begin{eqnarray*}
N_{r}(B(z,R)\cap \mathcal{A}) &\leq &N_{r}(B(z,R)\cap (A_{m}\cup
A_{m+1}))+N_{r}\left( B(z,R)\cap \bigcup_{i=k+1}^{\infty }A_{i}\right) \\
&\leq &2C\left( \frac{R}{r}\right) ^{d}+\sum_{i=k+1}^{j-1}N_{r}(B(z,R)\cap
A_{i})+1
\end{eqnarray*}%
(where the sum is empty if $j-1<k+1$). Since $r<\left\vert A_{i}\right\vert $
for $i=k+1,\ldots,j-1$, from (\ref{Adim1}) we again see that%
\begin{eqnarray*}
N_{r}(B(z,R)\cap \mathcal{A}) &\leq &2C\left( \frac{R}{r}\right)
^{d}+C\sum_{i=k+1}^{j-1}\left( \frac{\left\vert A_{i}\right\vert }{r}\right)
^{d}+1 \\
&\leq &C^{\prime }\left( \frac{R}{r}\right) ^{d}+C^{\prime }\left( \frac{%
\left\vert A_{k+1}\right\vert }{r}\right) ^{d}\leq C^{\prime \prime }\left( 
\frac{R}{r}\right) ^{d}.
\end{eqnarray*}%
That proves that $\dim _{A}\mathcal{A}\leq d$ and hence $\dim _{qA}\mathcal{A%
}=d$.

\medskip

\textbf{Proof of Claim:}

Choose $\gamma \leq 1$ such that the diameter of $A_{k}\geq \gamma \alpha
_{m_{k}}\beta $ for all $k$. Temporarily fix $k$. Choose $R$ and $r<\min
(\left\vert A_{k}\right\vert ,R)$.

First, suppose there is some $j\in \mathbb{N}$ such that 
\begin{equation*}
\gamma \alpha _{m_{k}}\beta ^{j+1}/4<R\leq \gamma \alpha _{m_{k}}\beta ^{j}/4
\end{equation*}%
(in particular, $R<\left\vert A_{k}\right\vert )$. If $j>n_{k},$ then $2R$
is smaller than the smallest block in $A_{k}$ and thus $B(z,R)$ can
intersect at most two blocks in $A_{k}$. As there are at most $1/v$ gaps in
each block, 
\begin{equation*}
N_{r}(B(z,R)\cap A_{k})\leq \frac{2}{v}\leq C\left( \frac{R}{r}\right) ^{d}.
\end{equation*}

Hence assume $j\leq n_{k}$. Then $2R$ is less than the length of any block
of level $\leq j$ and thus $B(z,R)\cap A_{k}$ can intersect at most two
(consecutive) blocks of level $\leq j,$ as well as the interval $I$
in-between (where an in-between interval could mean the interval between the
left or right-most block of level $j$ and the endpoint of the set $A_{k})$.
The points in $\mathcal{A}$ from the two blocks of level at most $j$ can be
covered by $2/v$ balls of radius $r,$ hence 
\begin{equation*}
N_{r}(B(z,R)\cap A_{k})\leq \frac{2}{v}+N_{r}(B(z,R)\cap I).
\end{equation*}%
Notice that the interval $I$ will contain (at most) $2^{n-j}$ blocks of
level $n\geq j+1$. Also, observe that the interval between two consecutive
blocks of level $n$ (should it exist in $A_{k})$ has length at most%
\begin{equation*}
\sum_{i=n+1}^{\infty }2^{i-(n+1)}\alpha _{m_{k}}\beta ^{i}\leq \alpha
_{m_{k}}\frac{\beta ^{n+1}}{1-2\beta }.
\end{equation*}%
Thus if 
\begin{equation}
\alpha _{m_{k}}\frac{\beta ^{n+1}}{1-2\beta }<r\leq \alpha _{m_{k}}\frac{%
\beta ^{n}}{1-2\beta }\text{ for some }n\geq j+1,\text{ }  \label{rbd}
\end{equation}%
then each such subinterval can be covered by one ball of radius $r$. There
are at most $2^{n-j}$ such subintervals contained in $I$. Additionally, the
points in $\mathcal{A}$ from each of the blocks of levels $j+1,\ldots,n$
contained in $I$ can be covered by $1/v$ balls of radius $r$ and there are $%
\leq 2^{n-j}$ such blocks. So 
\begin{eqnarray*}
N_{r}(B(z,R)\cap I) &\leq &2^{n-j}+2^{n-j}/v\leq C^{\prime }2^{n-j} \\
&=&C^{\prime }(\beta ^{j-n})^{d}\leq C^{\prime \prime }(R/r)^{d}\text{.}
\end{eqnarray*}%
Thus for such $r$ we certainly have 
\begin{equation*}
N_{r}(B(z,R)\cap A_{k})\leq \frac{2}{v}+C^{\prime \prime }\left( \frac{R}{r}%
\right) ^{d}\leq C\left( \frac{\min (\left\vert A_{k}\right\vert ,R)}{r}%
\right) ^{d}
\end{equation*}%
for a suitable constant $C$ (recalling that $R<\left\vert A_{k}\right\vert $
and $R/r\geq 1$).

If (\ref{rbd}) does not hold, we must have 
\begin{equation*}
\alpha _{m_{k}}\frac{\beta ^{j+1}}{1-2\beta }<r\leq R\leq \gamma \alpha
_{m_{k}}\beta ^{j}/4.
\end{equation*}%
Then $B(z,R)$ is covered by a bounded number (independent of $j,k)$ of balls
of radius $r$ and that also suffices to prove 
\begin{equation*}
N_{r}(B(z,R)\cap A_{k})\leq C\left( \frac{R}{r}\right) ^{d}\leq C\left( 
\frac{\min (\left\vert A_{k}\right\vert ,R)}{r}\right) ^{d}
\end{equation*}%
for these $r.$

Otherwise, $R>\gamma \alpha _{m_{k}}\beta /4$. If (still) $R\leq \left\vert
A_{k}\right\vert ,$ then we argue similarly, taking as $I$ the full set $%
A_{k}$. Finally, suppose $R>\left\vert A_{k}\right\vert $. Then 
\begin{equation*}
N_{r}(B(z,R)\cap A_{k})\leq N_{r}(B(z^{\prime },\left\vert A_{k}\right\vert
)\cap A_{k})
\end{equation*}%
where $z^{\prime }\in A_{k}$. As $r<\left\vert A_{k}\right\vert ,$ the
previous work shows 
\begin{equation*}
N_{r}(B(z^{\prime },\left\vert A_{k}\right\vert )\cap A_{k})\leq C\left( 
\frac{\left\vert A_{k}\right\vert }{r}\right) ^{d}\leq C\left( \frac{\min
(\left\vert A_{k}\right\vert ,R)}{r}\right) ^{d}.
\end{equation*}%
This completes the proof of the claim.

\medskip

\textbf{Conclusion of the proof for general case of }$\Phi \rightarrow 0$%
\textbf{:}

Lastly, we remark that the same arguments show that if $\Phi \rightarrow 0,$
then for each $d\in \lbrack \overline{\dim }_{\Phi }C_{a},1)$ there is some $%
E=\mathcal{A}\cup \mathcal{B}\in \mathcal{C}_{a}$ with $\,\dim _{qA}\mathcal{%
A}=\dim _{A}\mathcal{A}$, so that also $\overline{\dim }_{\Phi }\mathcal{A}%
=d $. Further, $\overline{\dim }_{\Phi }\mathcal{B}=\overline{\dim }_{\Phi
}C_{a}$ and thus $\overline{\dim }_{\Phi }E=d$ by the union result,
Proposition \ref{union}. Since we have $1=\dim _{qA}D_{a}=\overline{\dim }%
_{\Phi }D_{a}$ the proof is complete when $\Phi \rightarrow 0$.
\end{proof}

\end{document}